\documentclass[11pt]{amsart}
\usepackage{amsmath}
\usepackage{amssymb}
\usepackage[english,french]{babel}
\usepackage{blindtext}
\usepackage{eucal}
\usepackage{amsthm}
\usepackage{amscd}
\usepackage{hyperref}
\usepackage{soul}
\usepackage{url}
\usepackage{skull}
\usepackage{adjustbox}

\usepackage{tikz}
\usetikzlibrary{shapes.geometric, arrows}
\usetikzlibrary{shapes,arrows}
\usepackage[latin1]{inputenc}
\usepackage{tikz}
\usetikzlibrary{shapes,arrows}
\usetikzlibrary{shapes.multipart}
\usepackage{verbatim}
\usepackage{tkz-berge}
\usepackage{tikz-qtree}

\setlength{\textwidth}{6in} \setlength{\textheight}{8in}
\setlength{\oddsidemargin}{0.1in}
\setlength{\evensidemargin}{\oddsidemargin}

\usepackage{amssymb}
\usepackage[]{amsmath, amsthm, amsfonts,graphicx, amscd,}
\usepackage[all,cmtip]{xy}
\input amssym.def \input amssym
\usepackage{forest}
\usepackage{mdframed}
\usepackage{framed}

\interfootnotelinepenalty=10000

\begin{document}

\newtheorem{thm}{Theorem}[section]
\newtheorem{cor}[thm]{Corollary}
\newtheorem{claim}[thm]{Claim}
\newtheorem {fact}[thm]{Fact}
\newtheorem{con}[thm]{Conjecture}

\newtheorem*{thmstar}{Theorem}
\newtheorem{prop}[thm]{Proposition}
\newtheorem*{propstar}{Proposition}
\newtheorem {lem}[thm]{Lemma}
\newtheorem*{lemstar}{Lemma}
\newtheorem{conj}[thm]{Conjecture}
\newtheorem{question}[thm]{Question}
\newtheorem*{questar}{Question}
\newtheorem{ques}[thm]{Question}
\newtheorem*{conjstar}{Conjecture}
\newtheorem{fct}[thm]{Fact}
\theoremstyle{remark}
\newtheorem{rem}[thm]{Remark}
\newtheorem{exmp}[thm]{Example}
\newtheorem{cond}[thm]{Condition}
\newtheorem{np*}{Non-Proof}
\newtheorem*{remstar}{Remark}
\theoremstyle{definition}
\newtheorem{defn}[thm]{Definition}
\newtheorem*{defnstar}{Definition}
\newtheorem{exam}[thm]{Example}
\newtheorem*{examstar}{Example}
\newtheorem{assump}[thm]{Assumption}
\newtheorem{Thm}[thm]{Theorem}

\def \ta {\tau_{\mathcal{D}/\Delta}}
\def \D {\Delta}
\def \DD {\mathcal D}

\newcommand{\pd}[2]{\frac{\partial #1}{\partial #2}}
\newcommand{\td}{\text{tr.deg.}}
\newcommand{\pp}{\partial }
\newcommand{\pdtwo}[2]{\frac{\partial^2 #1}{\partial #2^2}}
\newcommand{\od}[2]{\frac{d #1}{d #2}}
\def\Ind{\setbox0=\hbox{$x$}\kern\wd0\hbox to 0pt{\hss$\mid$\hss} \lower.9\ht0\hbox to 0pt{\hss$\smile$\hss}\kern\wd0}
\def\Notind{\setbox0=\hbox{$x$}\kern\wd0\hbox to 0pt{\mathchardef \nn=12854\hss$\nn$\kern1.4\wd0\hss}\hbox to 0pt{\hss$\mid$\hss}\lower.9\ht0 \hbox to 0pt{\hss$\smile$\hss}\kern\wd0}
\def\ind{\mathop{\mathpalette\Ind{}}}
\def\nind{\mathop{\mathpalette\Notind{}}}
\numberwithin{equation}{section}

\def\id{\operatorname{id}}
\def\Frac{\operatorname{Frac}}
\def\Const{\operatorname{Const}}
\def\spec{\operatorname{Spec}}
\def\span{\operatorname{span}}
\def\exc{\operatorname{Exc}}
\def\Div{\operatorname{Div}}
\def\cl{\operatorname{cl}}
\def\mer{\operatorname{mer}}
\def\trdeg{\operatorname{trdeg}}
\def\ord{\operatorname{ord}}

\newcommand{\m}{\mathbb }
\newcommand{\mc}{\mathcal }
\newcommand{\mf}{\mathfrak }
\newcommand{\is}{^{p^ {-\infty}}}
\newcommand{\QQ}{\mathbb Q}
\newcommand{\fh}{\mathfrak h}
\newcommand{\CC}{\mathbb C}
\newcommand{\RR}{\mathbb R}
\newcommand{\ZZ}{\mathbb Z}
\newcommand{\tp}{\operatorname{tp}}
\newcommand{\SL}{\operatorname{SL}}
\newcommand{\gen}[1]{\left\langle#1\right\rangle}

\subjclass[2010]{11F03, 12H05, 03C60}

\title[Some functional transcendence results]{Some functional transcendence results around the Schwarzian differential equation.}

\author[D. Bl\'azquez-Sanz]{David Bl\'azquez-Sanz}
\address{David Bl\'azquez-Sanz, Universidad Nacional de Colombia - Sede Medell\'in, Facultad de Ciencias, Escuela de Matem\'aticas, Colombia}
\email{ dblazquezs@unal.edu.co}

\author[G. Casale]{Guy Casale}
\address{Guy Casale, Univ Rennes, CNRS, IRMAR-UMR 6625, F-35000 Rennes, France}
\email{guy.casale@univ-rennes1.fr}

\author[J. Freitag]{James Freitag}
\address{James Freitag, University of Illinois Chicago, Department of Mathematics, Statistics,
and Computer Science, 851 S. Morgan Street, Chicago, IL, USA, 60607-7045.}
\email{jfreitag@uic.edu}

\author[J. Nagloo]{Joel Nagloo}
\address{Joel Nagloo, CUNY Bronx Community College, Department of Mathematics and Computer Science, Bronx, NY 10453, and
CUNY Graduate Center, Ph.D. programs in Mathematics, 365 Fifth Avenue,
New York, NY 10016, USA}
\email{joel.nagloo@bcc.cuny.edu}

\thanks{G. Casale is partially supported by COFECUB-41988WC ``Holomorphic Foliations''. J. Freitag is partially supported by NSF grant DMS-1700095 and NSF CAREER award 1945251. J. Nagloo is partially supported by NSF grants DMS-1700336 and DMS-1952694, and PSC-CUNY grant \#63304-00 51.}

\maketitle
\selectlanguage{english}
\begin{abstract} This paper centers around proving variants of the Ax-Lindemann-Weierstrass (ALW) theorem for analytic functions which satisfy Schwarzian differential equations. In previous work, the authors proved the ALW theorem for the uniformizers of genus zero Fuchsian groups, and in this work, we generalize that result in several ways using a variety of techniques from model theory, differential Galois theory and complex geometry. 
\end{abstract}

\selectlanguage{french}
\begin{abstract} 
Dans cet article, nous d\'emontrons des variantes du th\'eor\`eme d'Ax-Lindemann-Weierstrass (ALW) pour des fonctions analytiques satisfaisant des \'equations diff\'erentielles de type `Schwarzienne'. Dans des travaux ant\'erieurs, nous avons prouv\'e le th\'eor\`eme ALW pour les uniformisantes de groupes fuchsiens de genre z\'ero. Dans ce travail, nous g\'en\'eralisons ce r\'esultat de plusieurs mani\`eres en utilisant des techniques vari\'ees provenant de la th\'eorie des mod\`eles, de la th\'eorie de Galois diff\'erentielle et de la g\'eom\'etrie complexe.
\end{abstract}
\selectlanguage{english}
\section{Introduction}
Let $X$ and $Y$ be algebraic varieties over $\m C$ and let $\phi: X^{an} \rightarrow Y^{an}$ be a complex analytic map which is not algebraic. In this case, for most algebraic subvarieties $X_0 \subset X$, the image $\phi (X_0)$ is \emph{not} algebraic. The pairs of algebraic subvarieties $(X_0 , Y_0)$ with $X_0 \subset X$ and $Y_0 \subset Y$ such that $\phi (X_0) = Y_0$ are called \emph{bi-algebraic} for $\phi$. Bi-algebraic subvarieties should be rare and revealing of important geometric aspects of the analytic map $\phi$. This manuscript centers around the problem of determining the bi-algebraic subvarieties of analytic maps and several related problems of functional transcendence. The maps we consider satisfy nonlinear differential equations of a certain general form.

The condition that $X$ is an algebraic variety is in fact slightly too restrictive for many of the specific interesting examples both here and in the literature, and so generally we will allow $X$ to be an open subset of an algebraic variety. 
Then an algebraic subvariety of $X$ is a set given by the vanishing of a finite system of polynomial equations on the open set. We will be especially interested in the case that $X$ is the universal cover of $Y$, where open domains such as $\m H, $ the complex upper half-plane arise naturally.  For a survey of recent developments in this area mainly centering on approaches using o-minimality, see \cite{KUYsurvey}. Our approach, started in \cite{CasFreNag}, is much different. 


We approach the problem through studying the differential equations satisfied by the function $\phi.$ Then the bi-algebraic subvarieties correspond in a natural way to algebraic relations between solutions of systems of differential equations. The classification of such relations for a given system is one of the central preoccupations of differential Galois theory and the model theory of differential fields, two of the central tools we employ. In \cite{CasFreNag}, we solved the bi-algebraicity problem with $\phi$ given by the map applying $j_\Gamma$ and its first two derivatives to any number of coordinates in $\m H^n$, where $j_\Gamma$ is a uniformizing function associated with the quotient $\Gamma \backslash \m H$ where $\Gamma$ is a Fuchsian group of the first kind and genus zero, and therefore $\Gamma \backslash \m H$ is a $n$-punctured sphere. In this case, denoting a coordinate in the domain by $t$, we have that $j_\Gamma (t)$ is a solution of the \emph{Schwarzian equation}:
$$S_{t}(y) + \frac{1}{2} (y')^2 R_\Gamma(y) = 0, $$
where $R_\Gamma$ is a rational function with coefficients in $\m C$, $y' = \frac{dy}{dt}$, and $S_{t}(y) = \frac{y'''}{y'} -\frac{3}{2}\left(\frac{y''}{y'}\right)^2$ denotes the Schwarzian. In this paper we wish to consider the bi-algebraicity problem for solutions of an arbitrary (no assumption on the rational function $R$) Schwarzian equation:  
$$S_{t}(y) + \frac{1}{2} (y')^2 R(y) = 0. $$
In the case of a Fuchsian group $\Gamma$, the rational function $R_\Gamma$ depends on the group $\Gamma$ and the choice of a uniformizing function $j_\Gamma$, and as there are only countably many Fuchsian groups of the first kind of genus zero, the results of \cite{CasFreNag} only apply to a very particular (and countable) class of rational functions $R$. The bi-algebraicity problem has an equivalent statement in terms of functional transcendence, see \cite{PilaAO,CasFreNag}. We will, following \cite{PilaAO}, refer to both forms as  Ax-Lindemann-Weierstrass type theorems. 

In this paper, we make two significant steps towards the solution of the general bi-algebraicity problem for analytic functions satisfying Schwarzian equations. In Section \ref{GenericTri}, we consider those $R$ which are of the same general form as those in the Schwarzian equations satisfied by analytic functions which are uniformizers of Fuchsian triangle groups. In that case, the function $R_\Gamma (y)$ takes the form

\[R_{\alpha,\beta,\gamma}(y)=\frac{1}{2}\left(\frac{1-{\beta}^{-2}}{y^2}+\frac{1-{\gamma}^{-2}}{(y-1)^2}+\frac{{\beta}^{-2}+{\gamma}^{-2}-{\alpha}^{-2}-1}{y(y-1)}\right)\] where $\alpha, \beta , \gamma$ correspond to the angles of the triangle which is a fundamental half domain for the triangle group $\Gamma$. In Section \ref{GenericTri} we consider first the case of ``generic Schwarz triangle equations" where $\alpha, \beta, \gamma \in \m C$ are algebraically independent over $\m Q.$ In this case, we give a complete solution to the problem of bi-algebraicity, even with different such analytic functions applied to each coordinate. The case of generalized triangle equations is generally interesting (it includes, for instance the ALW result for the $j$-function associated with elliptic curves), but it also allows for an important and interesting generalization of our ALW result from \cite{CasFreNag}, which we describe next. 

For a Fuchsian group of the first kind $\Gamma$ the quotient $\Gamma \backslash \m H$ can be represented as an algebraic curve, not necessarily complete. By the genus of the group $\Gamma$ we mean the genus of the unique non singular completion of $\Gamma \backslash \m H$. In \cite{CasFreNag} we proved the ALW theorem for $\Gamma \backslash \m H$ genus zero.  In Section \ref{belyi} we drop the assumption that $\Gamma \backslash \m H$ is genus zero, but assume that $\Gamma \backslash \m H$ is given by an algebraic curve over $\m Q^{alg}$. Belyi \cite{Belyi} proved that any nonsingular projective algebraic curve over $\m Q^{alg}$ gives a cover of the Riemann sphere which is ramified at only three points. Belyi's theorem allows us to apply our result for triangle groups to prove the ALW theorem in the case that $\Gamma \backslash \m H$ has arbitary genus but is an algebraic curve over $\m Q^{alg}$. We leave the general case in which $\Gamma \backslash \m H$ is not assumed to be defined over $\m Q ^{alg}$ as an open problem for future work. In Proposition \ref{FiniteIndex}, we also establish a nice general fact showing that ALW results are not sensitive to finite index changes in $\Gamma$, a result used implicitely in \cite{FreSca} for the modular group.

In our previous work, we intensively studied the Schwarzian equation with $R_\Gamma$ the rational function coming from a genus zero Fuchsian group of the first kind. In Section \ref{GenericTri} we generalized our work under various assumptions on the form of the rational function $R$ or of the field of definition of $\Gamma \backslash \m H$, but pursued similar conclusions as in \cite{CasFreNag}. The second setting we consider in Section \ref{bialgcurves} assumes only a generic hypothesis about the rational function $R$ (that the Riccati equation associated with the rational function $R$ has no algebraic solutions, this is related to the differential Galois group of the linearized Schwarzian equation and it is satisfied by all uniformization equations), a much more general setting than any of the previously mentioned work. In this very general setting, we obtain slightly weaker results by characterizing bi-algebraic \emph{curves} rather than all varieties. Our results also allow for understanding the bi-algebraic curves for analytic maps $(J_1,J_2)$ where $J_1, J_2$ are solutions to Schwarzian equations associated with different rational functions $R_1, R_2.$ The developments of Section \ref{bialgcurves} are interesting functional transcendence results in their own right and point to the possibility of developing generalizations of our results in \cite{CasFreNag} and Section \ref{GenericTri}. 

\subsection*{Acknowledgements}
This work was mainly done at the School of Mathematics at the Institute for Advanced Study as a part of the 2019 Summer Collaborators Program. We thank the Institute for its generous support and for providing an excellent working environment. The authors also thank Peter Sarnak for useful conversations during our time at IAS. We also would like to thank the anonymous referee for careful reading of the paper and for very helpful comments and suggestions.

\section{Model theory and differential algebra} 

In this section, we provide a concise introduction to the model theory of differential fields, which we use in an essential way throughout much of the paper. Our introduction is not meant to be comprehensive, but we hope it provides the context and references for readers without a background in this area. A \emph{differential ring} $R$ is a commutative unitary ring with a \emph{derivation} ${\partial} : R \rightarrow R.$ A derivation ${\partial}:R \rightarrow R$ is an additive homomorphism satisfying Leibniz rule: $${\partial} (xy) = x {\partial} (y) + {\partial} (x ) y.$$ Any differential ring has a subring of \emph{constants} $C_R := \{x \in R \, | \, {\partial}x =0 \}$. 

When $R$ is a differential ring, by $R\{ \bar x \}$, we denote the ring of differential polynomials in variables $\bar x = (x_1, \ldots , x_n ).$ $R\{ \bar x \}$ is (as a ring) the ring of polynomials in $\bar x$ and their derivatives of every order over $R$, equipped with the natural derivation extending the derivation on $R$. When $K$ is a differential field and $F/K$ is an extension of differential fields with $a \in F$, by $K \langle a \rangle $ denotes the differential field generated by $a$ over $K$. All of the fields considered in this text will be of characteristic zero. 

Differential algebraic geometry - studying solution sets of systems of differential polynomials over differential rings - along the lines of classical algebraic geometry was largely initiated by Ritt (see \cite{Ritt1,KolchinDAAG}). The connection to model theory began with Robinson \cite{abby}, who axiomatized \emph{differentially closed fields of characteristic zero}, $DCF_0.$ We give the simpler axiomatization of Blum \cite{Blum}: 
\begin{itemize}
\item The axioms of algebraically closed fields of characteristic zero. 
\item $\partial$ is a derivation. 
\item Given two nonconstant differential polynomials $f$, $g$ in a single variable $x$, such that the order of $g$ is less than the order of $f$, there is some $x$ such that $f(x) = 0 $ and $g(x) \neq 0.$ 
\end{itemize} 

Differential fields $K \models DCF_0$ satisfying the axioms were shown by Blum to have \emph{quantifier elimination} (Robinson shows a slightly weaker property called \emph{model completeness}). For the reader not familiar with the terminology of model theory, we will give a geometric explanation of quantifier elimination in this setting. Any differential field $K$ comes equipped with the \emph{Kolchin topology}, in which the closed subsets of $K^n$ are the zero sets of systems of differential polynomials in $n$ variables. The Ritt-Raudenbush Basis Theorem implies that this topology is Noetherian and every closed set is given as the zero set of a finite system of differential polynomial equations. Such closed sets are called \emph{affine} differential varieties. A \emph{constructible set} in the Kolchin topology is finite Boolean combination of closed sets. In this setting of $DCF_0$, \emph{quantifier elimination} simply means that coordinate projections of constructible sets are again constructible. Concretely, given a system of differential polynomial equations\footnote{Any system of differential polynomial equations and inequalities is equivalent to one of the form given.} 
\begin{eqnarray*}
    p_1 (\bar x , \bar y ) & =   0 \\
    & \vdots  \\
    p_k (\bar x , \bar y ) & =  0 \\
    q(\bar x , \bar y) & \neq   0,
\end{eqnarray*}
then the set $\{\bar x \in K \, | \, \exists \bar y , \, p_1(\bar x, \bar y ) = \ldots = p_k ( \bar x , \bar y ) = 0 , \, q ( \bar x , \bar y ) \neq 0 \}$ is also given by some system of differential polynomial equations and inequalities in $\bar x$ over $K$. 

Above, we described the axioms of differentially closed fields and quantifier elimination informally using systems of algebraic differential equations. Next, we will describe the model theoretic perspective more formally. Our discussion assumes some familiarity with the basic notion of a first order formula in language, and we direct the reader to Section 1 of the very good survey article \cite{Marker} for more details about the basics of model theory.

The model theory of differential fields employs the language of differential rings, 
$$L_{\partial}=(0,1,+,\cdot,\partial).$$ 
The above axioms of Blum can be written as \emph{first order sentences} in the language $L_\partial$, while the axioms of algebraically closed fields (used as part of Blum's axioms) can be written as first order sentences in the simpler language of rings, $$L=(0,1,+,\cdot).$$
 
If $K$ is a differentially closed field, $F$ a subfield, $\overline{b}\in F^{m}$ and $\phi(\bar x ,\bar y)$ is an $L_{\partial}$-formula with free variables $\overline{x}$ and $\overline{y}$, then the set $X$ defined by 
\begin{eqnarray*}
X&:=&\{\overline{a}\in K^{n}\;:\;\phi(\overline{a},\overline{b})\text{ is true }\}\\
&=&\{\overline{a}\in K^{n}\;:\;K\models\phi(\overline{a},\overline{b})\}
\end{eqnarray*}

is called a {\em definable set} with parameters (i.e., the tuple $\overline{b}$) from $F$ or an $F$-definable set for short. For the reader uncomfortable with the general notion of a definable set, recall, in this setting, by quantifier elimination, $\phi (\bar x , \bar y)$ can be assumed to be given by a system of differential polynomial equations and inequalities (as in the discussion above). 

We now assume that $\mc U$ is a \emph{universal differential field} in the sense of \cite{Kolchin1953}; equivalently, we assume that $\mc U$ is a  saturated differentially closed field in the sense of model theory \cite[Section 6]{Marker}. So all differential fields discussed below are assumed to embed in $\mc U$. Let $\mc C \subset \mc U$ denote the constant field - in later sections of the paper, we will often assume that $\mc C$ is the field of complex numbers (often, for the results we pursue, this assumption entails no loss of generality). 

\begin{fct}\label{stableembed}
The field $\mc C$ is algebraically closed and any definable subset of $\mc C^n$ definable with $L_{\partial}$-formulas using parameters from a differential field $F$ is definable with $L$-formulas using parameters from $\mc C_F$. The $F$-definable subsets of $\mc C^n$ are precisely the $\mc C_F$-constructible subsets of $\mc C^n$ in the Zariski topology. 
\end{fct}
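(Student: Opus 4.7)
The plan is to handle the two assertions in sequence. To see $\mc C$ is algebraically closed, fix a nonconstant $f \in \mc C[x]$, take a root $\alpha \in \mc U$ (which exists because $\mc U$ is algebraically closed), and differentiate: since the coefficients of $f$ are constants, $\partial f(\alpha) = f'(\alpha)\cdot \partial\alpha = 0$. Choosing $f$ to be the minimal polynomial of $\alpha$ over $\mc C$ and using that characteristic zero forces separability, $f'(\alpha) \neq 0$; hence $\partial\alpha = 0$ and $\alpha \in \mc C$.

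For the second claim, let $X \subseteq \mc C^n$ be $F$-definable via $\phi(\bar x,\bar b)$ with $\bar b$ a tuple from $F$. The first step is to invoke Blum's quantifier elimination for $DCF_0$ to present $\phi$ as a Boolean combination of differential polynomial equations in $\bar x$ with parameters from $\bar b$. Restricting to $\bar x \in \mc C^n$ kills every $\partial^k x_j$, so each $p(\bar x,\bar b)$ collapses to an ordinary polynomial $\tilde p(\bar x) \in F[\bar x]$ whose coefficients are universal polynomial expressions in $\bar b$ and its iterated derivatives. Thus $X$ is already Zariski-constructible in $\mc C^n$, though a priori only defined over $F$ rather than over $\mc C_F$.

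The main obstacle is to descend the field of definition from $F$ to $\mc C_F$. For this I would run the standard automorphism / elimination-of-imaginaries argument: $X$ is clearly fixed setwise by $\operatorname{Aut}(\mc U/F)$, and the key technical input is that the restriction map $\operatorname{Aut}(\mc U/F) \to \operatorname{Aut}(\mc C/\mc C_F)$ is \emph{surjective}. This is where the classical Wronskian-style fact enters, namely that $F$ and $\mc C$ are linearly disjoint over $\mc C_F$: any $F$-linear dependence among elements of $\mc C$ of minimal length, when differentiated, produces a strictly shorter one, forcing the original coefficients to be constants. Linear disjointness makes $F\mc C$ canonically isomorphic to $F \otimes_{\mc C_F} \mc C$ and allows any $\tau \in \operatorname{Aut}(\mc C/\mc C_F)$ to be extended via $\operatorname{id}_F \otimes \tau$ to a differential automorphism of $F\mc C$ fixing $F$ pointwise; saturation of $\mc U$ then lifts this further to an element of $\operatorname{Aut}(\mc U/F)$.

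Granted the surjectivity, $X$ is invariant under all of $\operatorname{Aut}(\mc C/\mc C_F)$. Elimination of imaginaries for $ACF_0$ then produces a canonical parameter of $X$ in some $\mc C^k$, which must lie in the fixed field of $\operatorname{Aut}(\mc C/\mc C_F)$; in characteristic zero this fixed field coincides with $\mc C_F$. Finally, Tarski–Chevalley quantifier elimination for $ACF_0$ identifies $\mc C_F$-definable subsets of $\mc C^n$ in $L$ with $\mc C_F$-constructible subsets in the Zariski topology, establishing the stated equivalence; the reverse inclusion is immediate since any such constructible set is already cut out by polynomial formulas over $\mc C_F \subseteq F$.
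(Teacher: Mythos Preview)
The paper itself does not prove this fact; it only cites \cite[Lemma 1.11]{LAMT} and \cite[Fact 1.6]{KamPil}. Your argument that $\mc C$ is algebraically closed is correct and standard, and your reduction via quantifier elimination to polynomial conditions $\tilde p(\bar x)\in F[\bar x]$ on $\mc C^n$ is also fine.

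The descent step, however, has a real gap. Your appeal to saturation to lift $\operatorname{id}_F\otimes\tau$ from $F\mc C$ to an element of $\operatorname{Aut}(\mc U/F)$ does not go through on cardinality grounds: in a $\kappa$-saturated $\mc U$ every infinite definable set, in particular $\mc C$, has size at least $\kappa$, so $F\mc C$ is not small and strong $\kappa$-homogeneity does not apply to partial elementary maps with domain $F\mc C$. A second, related issue is that invoking elimination of imaginaries for $ACF_0$ presupposes that $X$ is already definable in the pure field structure on $\mc C$ with parameters from $\mc C$; your sentence ``Thus $X$ is already Zariski-constructible in $\mc C^n$'' asserts exactly this, but polynomials with coefficients in $F\not\subseteq\mc C$ do not a priori cut out Zariski-constructible subsets of $\mc C^n$, and this is essentially the stable-embeddedness statement being proved.

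The fix is to use the linear disjointness you correctly isolated, but \emph{directly} rather than via automorphisms. For each $\tilde p\in F[\bar x]$, choose a $\mc C_F$-basis $e_1,\dots,e_r$ of the $\mc C_F$-span of its coefficients and write $\tilde p=\sum_i e_i\,q_i$ with $q_i\in\mc C_F[\bar x]$. Linear disjointness of $F$ and $\mc C$ over $\mc C_F$ says the $e_i$ remain linearly independent over $\mc C$, so for $\bar c\in\mc C^n$ one has $\tilde p(\bar c)=0$ if and only if $q_i(\bar c)=0$ for all $i$. This replaces every $F$-polynomial condition by a finite conjunction of $\mc C_F$-polynomial conditions and yields $\mc C_F$-constructibility of $X$ immediately, with no need for the automorphism or EI machinery.
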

A proof can be found in \cite[Lemma 1.11]{LAMT} and \cite[Fact 1.6]{KamPil}. From Fact \ref{stableembed} together with quantifier elimination, one can obtain the following result which we use extensively in later sections:

\begin{fct}\label{descent}
Let $\theta(x_1,x_2,\ldots,x_n)$ be a $L_{\partial}$-formula with parameters in a subfield field $F$ of $\mc C$. Suppose we have $(\alpha_1,\alpha_2,\ldots,\alpha_n ) \in \mc C^n$ which is a generic point on some irreducible algebraic variety $V$ over $F$ such that $\mathcal{U} \models \theta (\alpha _1,\alpha _2,\ldots,\alpha _n)$. Then for an $F$-definable (thus Zariski-constructible) Zariski dense subset $U$ of $V$, we have $\mc U \models \theta (\bar a )$ for all $\bar a \in U$.
\end{fct}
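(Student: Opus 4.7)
The plan is to combine Fact \ref{stableembed} with a standard argument about constructible sets and generic points. First, note that by hypothesis $\theta$ has parameters in $F \subseteq \mathcal{C}$, so $\mathcal{C}_F = F$, and the set
\[ X_\theta := \{\bar a \in \mathcal{C}^n : \mathcal{U} \models \theta(\bar a)\} \]
is, by Fact \ref{stableembed}, a Zariski-constructible subset of $\mathcal{C}^n$ defined over $F$ in the language of rings. In particular, $D := X_\theta \cap V$ is an $F$-constructible subset of $V$, and by hypothesis $\alpha \in D$.

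Next, I would unpack the constructible structure of $D$. Write
\[ D = \bigcup_{i=1}^k (W_i \setminus W_i'), \]
where each $W_i$ is an $F$-closed subvariety of $V$ and each $W_i'$ is a proper $F$-closed subvariety of $W_i$. Since $\alpha \in D$, there is some index $i$ with $\alpha \in W_i$ and $\alpha \notin W_i'$. Because $\alpha$ is a generic point of $V$ over $F$, every $F$-closed subvariety of $V$ that contains $\alpha$ must equal $V$; hence $W_i = V$. Setting $U := V \setminus W_i'$, we obtain a non-empty Zariski-open $F$-definable subset of $V$, and by irreducibility of $V$ it is Zariski-dense. By construction $U \subseteq D \subseteq X_\theta$, so $\mathcal{U} \models \theta(\bar a)$ for every $\bar a \in U$, as desired.

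The only nontrivial ingredient is Fact \ref{stableembed}, which converts the a priori rather wild $L_\partial$-definable trace of $\theta$ on $\mathcal{C}^n$ into an honest Zariski-constructible set over $F$; once this conversion is in hand, the rest of the argument is the usual observation that a constructible set containing the generic point of an irreducible variety contains a dense open subset. I do not expect any serious obstacle; the only point to be careful about is verifying that the parameters defining the resulting open set $U$ can be taken in $F$ (rather than in some larger constant field), which is ensured because $F = \mathcal{C}_F$ and Fact \ref{stableembed} gives descent of parameters to $\mathcal{C}_F$.
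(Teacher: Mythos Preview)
Your argument is correct and follows the same approach as the paper: both invoke Fact~\ref{stableembed} to conclude that the trace of $\theta$ on $\mathcal{C}^n$ is $F$-constructible, then use genericity of $\bar\alpha$ to deduce density in $V$. The paper simply takes $U = W \cap V$ and asserts it is Zariski dense, whereas you go a step further and extract a dense open subset from the constructible decomposition; this is a harmless (indeed slightly stronger) refinement of the same idea.
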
 
To see that this fact holds,\footnote{This fact is mild a generalization of \cite[Fact 2.11]{NagPil1}.} note that the collection W of elements $\bar a \in \mc C^n$ such that $\mc U \models \theta ( \bar a )$ is an $F$-definable subset of $\mc C^n$. By Fact \ref{stableembed}, we have that $W$ is an $F$-constructible subset of $\mc C^n$ in the Zariski topology. But since the formula $\theta$ holds for a generic point $\bar \alpha$ of $V$ over $F$, the set $U=W\cap V$ must be Zariski dense in $V$. 

Both of the previous facts center around establishing structural results around definable sets in $DCF_0$. Developing a deep structure theory for definable sets (including e.g. dimension theory and understanding invariants of definable sets) is a central goal in most parts of model theory. Such a structural theory is, in general, impossible to develop due to certain phenomena which arise in general in first order logic (e.g. incompleteness and undecidability). The approach of much of modern model theory is to make some \emph{tameness} assumption about the class of structures one considers, and then to attempt to understand definable sets in the more restrictive setting. For instance, a structure is \emph{o-minimal} if it is totally ordered and the definable sets in one variable are finite unions of points and open intervals. Many disparate modern applications of model theory share this same very general theme.

With the new axiomatization of $DCF_0$, Blum \cite{Blum} was able to observe that $DCF_0$ satisfies a strong structural property, called \emph{$\omega$-stability}. So, deep results from that setting can be brought to bear in understanding constructible sets in the Kolchin topology. The previous two facts can be seen as a special case of a more general principle called \emph{stable-embeddedness} which holds in even more general settings \cite{Pillaystabem}. It is not feasible to give a comprehensive and accessible overview of $\omega$-stability, but in the portions the next section of the paper where the theory is utilized, we attempt to give a self-contained statement and explanation of the relevant results as the arise - for instance, in Section \ref{ortho}. The notions we use from geometric stability theory can be found in the comprehensive reference \cite{GST}. 

\section{Summary of the Genus zero case}
\label{section2}

Let us begin with a quick review of the results in \cite{CasFreNag}. We recall them here so that the paper is self-contained.

\subsection{Schwarzian Equation} By a Schwarzian differential equation, we mean an equation of the form
\begin{equation}\tag{$\star$}
S_{t}(y) +(y')^2 R(y) =0\label{(*)}.
\end{equation}
where $S_{t}(y)=\left(\frac{y''}{y'}\right)' -\frac{1}{2}\left(\frac{y''}{y'}\right)^2$ denotes the Schwarzian derivative ($'=\frac{d}{dt}$) and $R$ is a rational function over $\mathbb{C}$. The equation naturally appears in the study of automorphic functions. Indeed, the Riemann mapping theorem states that if $D$ is a non-empty simply connected open subset of $\mathbb{C}$ which is not all of $\mathbb{C}$, then there exists a biholomorphic mapping $f$ from $D$ onto the upper half complex plane $\mathbb{H}$. Furthermore, if $D$ is bounded by a simple closed contour, then $f$ extends to a homeomorphism from the closure of $D$ onto $\overline{\mathbb{H}}=\mathbb{H}\cup{\bf P}^1(\mathbb{R})$. 
\begin{center}
\includegraphics[scale=0.3]{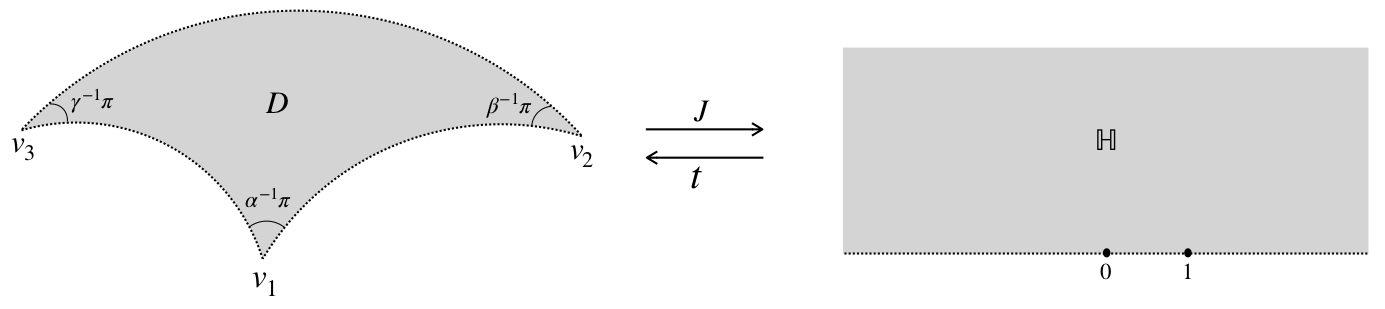}    
\end{center}
In the case when $D$ is a circular polygon with vertices $v_{1},v_{2},\ldots,v_{n}$ and with respective internal angles $\frac{\pi}{\alpha_1},\frac{\pi}{\alpha_2},\ldots,\frac{\pi}{\alpha_2}$, Schwarz (cf. \cite[Section 5.8]{AbloFoka}) showed that there exist $2n$ real numbers $a_1,\ldots,a_n$ and $\beta_1,\ldots,\beta_n$ such that the (unique up to action of ${\rm PSL}_2(\mathbb R)$) biholomorphic mapping $J:D\rightarrow\overline{\mathbb{H}}$ satisfies a Schwarzian equation $(\star)$ with $R$ given by
\[R_{J}(y)=\frac{1}{2}\sum_{i=1}^{r}{\frac{1-\alpha_i^{-2}}{(y-a_i)^2}}+\sum_{i=1}^{r}{\frac{\beta_i}{y-a_i}}.\]

\begin{exmp}\label{ExampleTriangle}
In the special case when $D$ is a circular triangle $\bigtriangleup(\alpha,\beta,\gamma)$ with vertices $v_{1},v_{2},v_{3}$ and with respective internal angles $\frac{\pi}{\alpha}$, $\frac{\pi}{\beta}$ and $\frac{\pi}{\gamma}$, one can completely determine the constants appearing in the equations. Indeed, if we impose that $J$ sends the vertices $v_{1},v_{2},v_{3}$ to $\infty,0,1$ respectively, then 
\[R(y)=R_{\alpha,\beta,\gamma}(y)=\frac{1}{2}\left(\frac{1-{\beta}^{-2}}{y^2}+\frac{1-{\gamma}^{-2}}{(y-1)^2}+\frac{{\beta}^{-2}+{\gamma}^{-2}-{\alpha}^{-2}-1}{y(y-1)}\right).\]
The function $J(t)$ (as well as its inverse) is called a {\em Schwarz triangle function}. On the other hand, by a {\em Schwarz triangle equation} we mean the equation ($\star$) where the parameters $\alpha,\beta,\gamma$ are any complex numbers. By a generic Schwarzian triangle equations we mean the ODE ($\star$) with $R = R_{\alpha,\beta,\gamma}$ and such that $\alpha,\beta,\gamma$ are algebraically independent over $\mathbb{Q}$.
\end{exmp}


Some differential algebraic properties of solutions of equation ($\star$) can be understood by means of its linearization. Let 
us consider the following second order linear differential equation
\begin{equation}\label{eq:linear}
\frac{d^2\psi}{dy^2} + \frac{1}{2}R(y)\psi=0.
\end{equation}
A direct substitution shows that the logaritmic derivative $u = \frac{d\log\psi}{dy}$ of any solution satisfies the Riccati equation
\begin{equation}\label{eq:riccati}
\frac{du}{dy} + u^2 + \frac{1}{2}R(y) = 0.
\end{equation}
Moreover, if $\psi_1$ and $\psi_2$ are linearly independent solutions of \eqref{eq:linear} then the quotient $t = \frac{\psi_1}{\psi_2}$ satisfies $\frac{d^2 t}{dy^2} = -2u\frac{dt}{dy}$
and it follows,
\begin{equation}\label{eq:principal}
S_{y}(t) = R(y).
\end{equation}
From the inversion formula for Schwarzian derivatives we obtain that the inverse function of $t$ is a solution of ($\star$). The definition of $t$ as the quotient of two linearly independent solutions of \eqref{eq:linear} ensures that the group of ${\rm PSL}_2(\CC)$ acts freely and transitively in the space of solutions of \eqref{eq:principal}: if $t_1$ and $t_2$ are two such solutions then
$t_2 = \frac{at_1 + b}{ct_1 + d}$ for certain $a,b,c,d\in \mathbb C$ with $ad-bc = 1$.
Let us fix a local solution $y(t)$ for $(\star)$ and let $t(y)$ be its inverse function. Let us denote by $\dot t$, $\ddot t$ the derivatives of $t$ with respect to $y$. We have
$$y' = \frac{1}{\dot t(y)}, \quad y''= -\frac{\ddot t(y)}{\dot t^3(y)}.$$
 Let us consider $\mathbb C(t)\langle y \rangle_{\frac{d}{dt}}$ the field of the variety defined by equation \eqref{(*)}, it is $\mathbb C(t,y,y',y'')$ endowed with the derivation 
$$\frac{d}{dt} =\frac{\partial}{\partial t} + y'\frac{\partial}{\partial y} +y''\frac{\partial}{\partial y'}+ \left( \frac{3}{2} \frac{y''^2}{y'} - y'^3 R(y) \right)\frac{\partial}{\partial y''}.$$
 If one does the same construction for the equation \eqref{eq:principal} we get a differential field $\mathbb C(y)\langle t \rangle_{\frac{d}{dy}}$ described by the field $\mathbb C(y,t,\dot t, \ddot t)$ with derivation
 $$
 \frac{d}{dy} = \frac{\partial}{\partial y} + \dot t \frac{\partial}{\partial t} + \ddot t \frac{\partial}{\partial \dot t} + \left(\frac{3}{2} \frac{\ddot t^2}{\dot t} + R(y)\dot t \right)\frac{\partial}{\partial \ddot t}.
 $$ 
 By the above considerations we have an identity of fields (but not differential fields),
$$
\mathbb C(t)\langle y\rangle_{\frac{d}{dt}} = \mathbb C(t,y,y',y'') = \mathbb C(y,t,\dot t,\ddot t) = \mathbb C(y)\langle t\rangle_{\frac{d}{dy}}.$$
Such aforementioned differential structures are, of course, related. Both represent the same differential equation with different choices of independent variables; they are proportional, $\frac{d}{dt} = y'\frac{d}{dy}$. The next proposition summarizes the elements of differential Galois theory needed in this article  for more detail the reader may consult the booklet \cite{magid}. 
\begin{prop}\label{riccati}
The differential field extension $\mathbb C(y)\subset \mathbb C(y)\langle t \rangle$ is a Picard-Vessiot extension with Galois group in ${\rm PSL}_2(\mathbb C)$. The following are equivalent
\begin{enumerate}
    \item $t,y,y',y''$ are algebraically independent over $\mathbb C$.
    \item The differential Galois group  ${\rm Aut}(\mathbb C(y)\langle t \rangle/\mathbb C(y))$ is ${\rm PSL}_2(\mathbb C)$.
    \item The Riccati equation \eqref{eq:riccati} has no solution in $\mathbb C(y)^{\rm alg}$.
\end{enumerate}
\end{prop}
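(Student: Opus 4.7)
The plan is to leverage the standard dictionary between the Picard-Vessiot extension $\mathbb{C}(y)\subset\mathbb{C}(y)\langle t\rangle$ and its differential Galois group $G\leq {\rm PSL}_2(\mathbb{C})$. First I would verify the Picard-Vessiot claim: $t=\psi_1/\psi_2$ lies in the Picard-Vessiot extension of \eqref{eq:linear}, whose Galois group sits in ${\rm SL}_2(\mathbb{C})$ because \eqref{eq:linear} is in reduced form (no first derivative term, hence the Wronskian is a nonzero constant). The induced action on the ratio $t$ is by Möbius transformations, so it factors through ${\rm PSL}_2(\mathbb{C})$; the subfield $\mathbb{C}(y)\langle t\rangle$ is the fixed field of the kernel $\{\pm I\}$ and is itself Picard-Vessiot with Galois group the image of $G$ in ${\rm PSL}_2(\mathbb{C})$.

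For $(2)\Leftrightarrow(3)$, I would invoke the classical Kolchin/Kovacic classification of algebraic subgroups of ${\rm SL}_2(\mathbb{C})$: any proper algebraic subgroup is, up to conjugacy, contained in a Borel, in the normalizer of a maximal torus, or is finite. In each such case, $G$ preserves a finite, nonempty collection of lines in the two-dimensional solution space of \eqref{eq:linear}, which produces a solution $\psi$ whose logarithmic derivative $u=\psi'/\psi$ is algebraic over $\mathbb{C}(y)$. By construction $u$ then satisfies \eqref{eq:riccati} and is an algebraic solution of the Riccati equation. Conversely, an algebraic solution of \eqref{eq:riccati} produces an invariant finite set of lines for $G$, preventing $G$ from being all of ${\rm PSL}_2(\mathbb{C})$.

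For $(1)\Leftrightarrow(2)$, I would apply the Picard-Vessiot correspondence in the form $\dim G = \operatorname{trdeg}_{\mathbb{C}(y)}\mathbb{C}(y)\langle t\rangle$. Since $\dim {\rm PSL}_2(\mathbb{C})=3$ and $\mathbb{C}(y)\langle t\rangle = \mathbb{C}(y,t,\dot t,\ddot t)$, condition (2) is equivalent to $t,\dot t,\ddot t$ being algebraically independent over $\mathbb{C}(y)$, and (using that $y$ is itself transcendental over $\mathbb{C}$) to $y,t,\dot t,\ddot t$ being algebraically independent over $\mathbb{C}$. Invoking the identities $\dot t=1/y'$ and $\ddot t = -y''/(y')^3$ from the excerpt, we get the equality of fields $\mathbb{C}(y,t,\dot t,\ddot t)=\mathbb{C}(y,t,y',y'')$, and hence equivalence with (1). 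The main obstacle is to argue cleanly the descent from the full Picard-Vessiot extension for the linear equation \eqref{eq:linear} to the sub-extension $\mathbb{C}(y)\langle t\rangle$ and to cite the Kovacic/Kolchin list of algebraic subgroups of ${\rm SL}_2(\mathbb{C})$ in the exact form needed; both ingredients are classical and recorded in \cite{magid}, so what remains is essentially bookkeeping once this framework is set up.
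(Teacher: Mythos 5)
Your proposal is correct and follows essentially the same route as the paper: realize $t=\psi_1/\psi_2$ inside the Picard--Vessiot extension of the linearization so that $\mathbb C(y)\langle t\rangle$ is Picard--Vessiot with Galois group the image in ${\rm PSL}_2(\mathbb C)$, get $(1)\Leftrightarrow(2)$ from $\dim G=\operatorname{trdeg}$ together with the field identity $\mathbb C(y,t,\dot t,\ddot t)=\mathbb C(t,y,y',y'')$, and get $(2)\Leftrightarrow(3)$ from the Kovacic criterion that the Galois group of the linear equation is a proper subgroup of ${\rm SL}_2(\mathbb C)$ exactly when the Riccati equation has an algebraic solution. The only differences are organizational (you prove two biconditionals instead of a cycle, and sketch the subgroup-classification argument that the paper simply cites from Kovacic), so no gap to report.
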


\begin{proof}
Let us write $t$ as the quotient $t = \frac{\psi_1}{\psi_2}$ of two solutions of \eqref{eq:linear}.
Then $\mathbb C(y)\subset\mathbb C(y)\langle \psi_1, \psi_2 \rangle$ is a Picard-Vessiot extension.
Its differential Galois group, 
${\rm Aut}(\mathbb C(y)\langle \psi_1, \psi_2 \rangle/\mathbb C(y))$, is represented in ${\rm SL}_2(\mathbb C)$ as a group of special linear matrices,
$$\left[\begin{array}{c} \psi_1 \\ \psi_2 \end{array} \right]\mapsto \left[\begin{array}{cc} a & b \\ c & d \end{array} \right]\left[\begin{array}{c} \psi_1 \\ \psi_2 \end{array} \right].$$
It follows, from Galois correspondence, that the intermediate extension $\mathbb C(y)\subset \mathbb C(y)\langle t \rangle$ 
is a Picard-Vessiot extension whose differential Galois group ${\rm Aut}(\mathbb C(y)\langle t \rangle/\mathbb C(y))$ is the image of ${\rm Aut}(\mathbb C(y)\langle \psi_1, \psi_2 \rangle/\mathbb C(y))$ in ${\rm PSL}_2(\mathbb C)$. 

 Let us see the equivalence. $(1)\Rightarrow(2)$: 
in such case the transcendence degree of $\mathbb C(y)\subset \mathbb C(y)\langle t\rangle$ 
is $3$, so its differential Galois group is the only $3$-dimensional group of ${\rm PSL}_2(\mathbb C)$. 
$(2)\Rightarrow(3)$: as it is shown in the preliminary considerations to Kovacic's algorithm; see \cite[\S 1.3]{Kovacic} the differential Galois group of 
$\mathbb C(y)\subset\mathbb C(y)\langle \psi_1, \psi_2 \rangle$ is a proper subgroup of ${\rm SL}_2(\mathbb C)$ if and only if the associated Riccati equation
has an algebraic solution. Having into account that the image of any proper 
subgroup ${\rm SL}_2(\mathbb C)$ is ${\rm PSL}_2(\mathbb C)$ is also a proper 
subgroup we finish. $(3)\Rightarrow (1)$: by the same reasons, the differential Galois group of $\mathbb C(y)\subset\mathbb C(y)\langle \psi_1, \psi_2 \rangle$ is ${\rm SL}_2(\mathbb 
C)$. Thus, by Galois correspondence the differential Galois group of  ${\rm Aut}(\mathbb 
C(y)\langle t \rangle/\mathbb C(y))$ is ${\rm PSL}_2(\mathbb C)$ of dimension $3$,
its transcendence degree over $\mathbb C(y)$ is thus $3$ and then $t,y,y',y''$ are
algebraically independent over $\mathbb C$.
\end{proof}

\subsection{Irreducibly \`a-la-Umemura and strong minimality} \label{someGST}

We fix a universal differentially closed field $\mathcal U$ and assume that the field of constants of $\mc U$ is $\m C$.

\begin{defn}
A definable set $\mathcal{Y}$ is said to be \emph{strongly minimal} if it is infinite and every definable subset is finite or co-finite.
\end{defn}

Strong minimality is a central notion in model theory for both applications and theoretical reasons. For instance, the Baldwin-Lachlan theorem gives a characterization of uncountably categorical theories in terms of  strongly minimal sets \cite{BaldLach}. If the definable set $\mathcal{Y}$ is defined by an ODE of the form $y^{(n)}=f(t,y,y',\ldots,y^{(n-1)})$, where $f$ is rational over $\mathbb{C}$ (like in the case of the Schwarzian equations), then $\mathcal{Y}$ is strongly minimal if and only if for any differential field extension $K$ of $\mathbb{C}$ and solution $y\in\mathcal{Y}$ , $\text{tr.deg.}_KK\gen{y}=0$ or $n$. Strong minimality has many interesting consequences on the relations between solutions of the equation. 

\begin{defn} Let $\mathcal{Y}$ be a strongly minimal set defined by an order $n$ ODE over $\m C$. Then
\begin{enumerate}
\item $\mathcal{Y}$ is \emph{geometrically trivial} if for any differential field extension $K$ of $\mathbb{C}$ and for any distinct solutions $y_{1},\ldots,y_{m}$, if the collection consisting of $y_{1},\ldots,y_{m}$ together with all their derivatives $y_{i}^{(j)}$ up to order $n-1$ is algebraically dependent over $K$ then for some $i<j$, the collection $y_{i}, y_{j}$ together with their derivatives is algebraically dependent over $K$. 
\item $\mathcal{Y}$ is \emph{strictly disintegrated} if for any differential field extension $K$ of $\mathbb{C}$ and $y_1,\ldots,y_n$ are distinct solutions that are not algebraic over $K$, then $$tr.deg._KK(y_1,y'_1,y''_1,\ldots,y_n,y'_n,y''_n)=3n$$.
\end{enumerate}
\end{defn}
Any strongly minimal set can be given the structure of a matroid (or pregeometry) and the notion of geometric triviality has its origin from matroid theory. An exposition of this very general model theoretic phenomenon and its application to $DCF_0$ can be found in \cite{MarkerSM}. The papers \cite{NagPil1} and \cite{NagPil2} discuss strict disintegratedness and the related notion {\em $\omega$-categoricity} in more details.

For order 1 definable sets defined over $\m C$, (so in particular autonomous differential equations), strong minimality implies geometric triviality.

\begin{fct}{\cite[Proposition 5.8]{CasFreNag}}\label{geo}
Let $\mathcal Y$ be a strongly minimal set of order $>1$ and suppose that $\mathcal Y$ is defined over $\mathbb C$. Then $\mathcal Y$ is geometrically trivial.
\end{fct}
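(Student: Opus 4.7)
The plan is to invoke the Hrushovski--Sokolovi\'c trichotomy for strongly minimal types in $DCF_0$ (see \cite{GST,MarkerSM}) and to exclude the two non-trivial branches of the trichotomy using the hypotheses that $\mathcal{Y}$ has order greater than $1$ and is defined over $\mathbb{C}$. Recall that, up to non-orthogonality, the trichotomy places $\mathcal{Y}$ in exactly one of three classes: geometrically trivial, non-trivial locally modular (and hence non-orthogonal to the Manin kernel $A^\sharp$ of a simple abelian variety $A$ with trivial $\mathbb{C}$-trace), or non-orthogonal to the field of constants $\mathbb{C}$. The task is therefore to rule out the last two cases.

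For the case $\mathcal{Y} \not\perp \mathbb{C}$: over a suitable base $K \supseteq \mathbb{C}$ containing the defining parameters of $\mathcal{Y}$, a generic $a \in \mathcal{Y}$ is interalgebraic over $K$ with some constant $c$. Since $c$ is a constant, $K\langle c \rangle = K(c)$ has transcendence degree at most $1$ over $K$, and interalgebraicity forces $K\langle a \rangle \subseteq K(c)^{\mathrm{alg}}$. Hence $\mathrm{tr.deg.}_K K \langle a \rangle \leq 1$. Because $\mathcal{Y}$ is defined by an order-$n$ ODE of the form recalled just above the statement, this transcendence degree equals $n$ for a non-algebraic generic solution; hence $n \leq 1$, contradicting the hypothesis that $\mathcal{Y}$ has order greater than $1$.

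For the non-trivial locally modular case: we have $\mathcal{Y} \not\perp A^\sharp$ with $A$ a simple abelian variety of trivial $\mathbb{C}$-trace, so $A$ is not isogenous to any abelian variety defined over $\mathbb{C}$. On the other hand, the isogeny class of $A$ is encoded by the canonical base of the generic type of $\mathcal{Y}$. Since $\mathcal{Y}$ is defined over $\mathbb{C}$ and $\mathbb{C}$ is algebraically closed (so that $\mathrm{acl}(\mathbb{C}) = \mathbb{C}$ in $DCF_0$), the canonical base of this type lies in $\mathbb{C}$, and hence $A$ descends up to isogeny to $\mathbb{C}$. This contradicts the triviality of the $\mathbb{C}$-trace of $A$, eliminating the locally modular branch.

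The main obstacle is the descent step in the locally modular case: one must justify that the isogeny class of $A$ is pinned down by the canonical base of the generic type of $\mathcal{Y}$, which is a standard but non-trivial application of the canonical base property in $\omega$-stable theories together with the Hrushovski--Sokolovi\'c identification of non-trivial locally modular strongly minimal sets in $DCF_0$ with Manin kernels of simple abelian varieties of trivial constant trace. By contrast, the orthogonality-to-constants step is a direct transcendence count exploiting the fact that differential and field-theoretic generation coincide on constants. Once both non-trivial branches of the trichotomy are excluded, $\mathcal{Y}$ is forced into the trivial branch, i.e., it is geometrically trivial.
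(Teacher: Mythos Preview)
The paper does not give its own proof of this statement; it is recorded as a Fact and attributed to \cite[Proposition~5.8]{CasFreNag}. So there is nothing in the present paper to compare your argument against directly.

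Your approach via the Hrushovski--Sokolovi\'c trichotomy is indeed the standard one, and the elimination of the branch $\mathcal{Y}\not\perp\mathbb{C}$ by a transcendence-degree count is correct. The locally modular branch is also handled along the right lines, but your phrasing is imprecise in one place. Saying that ``the isogeny class of $A$ is encoded by the canonical base of the generic type of $\mathcal{Y}$'' conflates two different objects: the canonical base of the generic type of $\mathcal{Y}$ lies in $\mathbb{C}$ simply because $\mathcal{Y}$ is $\mathbb{C}$-definable, but this alone does not pin down $A$. The clean argument is the automorphism one: since $\mathcal{Y}$ is fixed by every $\sigma\in\operatorname{Aut}(\mathcal{U}/\mathbb{C})$, non-orthogonality of $\mathcal{Y}$ to $A^\sharp$ forces $A^\sharp\not\perp(\sigma A)^\sharp$ for all such $\sigma$; by Hrushovski--Sokolovi\'c this means $A$ is isogenous to every $\mathbb{C}$-conjugate, and a compactness argument then gives an abelian variety over $\mathbb{C}$ isogenous to $A$, contradicting the trace-zero hypothesis. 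This is what your invocation of the ``canonical base property'' is gesturing at, but CBP (in its technical sense) is not quite the tool being used; the descent is really an invariance-under-automorphisms argument combined with the fact that two Manin kernels are non-orthogonal if and only if the underlying abelian varieties are isogenous.
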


Schwarzian equations are autonomous and we aim to prove that algebraic relation between theirs solutions have very specific arithmetic origins. Proposition \ref{riccati} can be seen as the $0$-step version of strong minimality theorem below has it involves no field extensions.

\begin{fct}{\cite[Theorem 3.2]{CasFreNag}} \label{thm:strongminimality}
Let $(K,\partial)$ be any differential field extension of $\mathbb{C}$ and let us assume that the Riccati equation (\ref{eq:riccati}) has no algebraic solution. For any solution $j_R$ of the Schwarzian equation $(\star)$ we have that
\[\text{tr.deg.}_KK\gen{j_R}=0\text{ or }3.\] In other words, the set defined by the equation $(\star)$ is strongly minimal.
\end{fct}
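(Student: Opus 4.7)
The strategy is proof by contradiction: assume $d := \trdeg_K K\langle j_R \rangle \in \{1,2\}$ and produce from this assumption an algebraic solution of the Riccati equation \eqref{eq:riccati} over $\mathbb{C}(y)$, contradicting the standing hypothesis. Since $(\star)$ has order $3$ and expresses $j_R'''$ rationally in $j_R, j_R', j_R''$, we have $K\langle j_R\rangle = K(j_R, j_R', j_R'')$, so the assumption $d<3$ is exactly the existence of a nontrivial algebraic relation among $j_R, j_R', j_R''$ over $K$. The task is to extract from this relation, together with the special form of $(\star)$, an algebraic solution of the Riccati.

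In case $d=1$, $j_R$ is transcendental over $K$ and $j_R', j_R''$ are algebraic over $K(j_R)$, so we may write $j_R' = F(j_R)$ for some $F\in K(y)^{\mathrm{alg}}$. Applying $d/dt$ and Leibniz yields $j_R'' = F(j_R)F'(j_R)$ and $j_R''' = F(j_R)\bigl((F')^2(j_R) + F(j_R)F''(j_R)\bigr)$, where primes on $F$ denote $d/dy$. Substituting into $(\star)$ and cancelling the common factor $F(j_R)$ gives the algebraic identity
\[
FF'' - \tfrac{1}{2}(F')^2 + F^2 R(y) = 0 \quad\text{in } K(y)^{\mathrm{alg}}.
\]
The substitution $u := F'/(2F)$ converts this directly into the Riccati equation $u'+u^2 + \tfrac{1}{2}R(y) = 0$, producing an algebraic solution of \eqref{eq:riccati} over $K(y)$. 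For case $d=2$, there is a single irreducible $P(j_R, j_R', j_R'')=0$ over $K$ defining a surface $V\subset\mathbb{A}^3$. Differentiating $P$ along $d/dt$ and eliminating $j_R'''$ via $(\star)$ gives a second polynomial relation; for $\trdeg$ to remain $2$, this second relation must be a multiple of $P$, which rigidly constrains $V$ to be foliated by the Schwarzian vector field. Combined with the invariance of $(\star)$ under reparametrizations $t\mapsto\frac{at+b}{ct+d}$ (each $\phi\in\mathrm{PSL}_2(\mathbb{C})$ sends solutions to solutions), this rigidity forces $V$ to be the pullback of a curve under a $\mathrm{PSL}_2$-equivariant quotient, reducing the analysis to a configuration of Case $d=1$ type and again yielding an algebraic solution of Riccati.

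It remains to descend the algebraic Riccati solution from $K(y)^{\mathrm{alg}}$ to $\mathbb{C}(y)^{\mathrm{alg}}$. Since $K\subset\mathcal{U}$ and the constants of $\mathcal{U}$ are $\mathbb{C}$, the constant field of $K(y)$ is $\mathbb{C}$, so the differential Galois group of the linearization \eqref{eq:linear} over $K(y)$ is a closed subgroup of the corresponding group over $\mathbb{C}(y)$. By the argument in the proof of Proposition \ref{riccati}, an algebraic Riccati solution over $K(y)^{\mathrm{alg}}$ forces the $K(y)$-Galois group to be a proper subgroup of $\mathrm{SL}_2(\mathbb{C})$, hence so is the $\mathbb{C}(y)$-Galois group, producing the required algebraic solution over $\mathbb{C}(y)^{\mathrm{alg}}$ and the desired contradiction. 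The main obstacle is Case $d=2$: extracting Riccati directly from a codimension-$1$ relation in $3$-space is not immediate, and the cleanest treatment likely goes via the $\mathrm{PSL}_2$-symmetry analysis sketched above, or alternatively by invoking Fact \ref{descent} applied to the first-order condition ``$(j_R, j_R', j_R'')$ has transcendence degree $\le 2$'' to reduce to a generic member of $V$ and run a variant of the $d=1$ computation on that quotient.
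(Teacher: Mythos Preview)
First, note that the paper does not itself prove this statement: it is recorded as a Fact and cited from \cite[Theorem~3.2]{CasFreNag}, so there is no in-paper proof to compare against. The argument in \cite{CasFreNag} proceeds via the Malgrange--Galois groupoid of the associated foliation, not by a direct transcendence-degree case split.

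Your sketch has genuine gaps. In Case $d=1$ you write $j_R' = F(j_R)$ with $F \in K(y)^{\mathrm{alg}}$ and then differentiate to obtain $j_R'' = F(j_R)F'(j_R)$. But $K$ is an arbitrary differential field extension of $\mathbb{C}$, so $F$ may have non-constant coefficients; applying $d/dt$ to $F(j_R)$ produces an extra term $(\partial_K F)(j_R)$ coming from the derivation on the coefficients, which you have silently dropped. With that term present the substitution $u = F'/(2F)$ no longer yields the Riccati equation. You would need a separate argument (e.g.\ exploiting autonomy of $(\star)$ or a specialization) that $F$ may be taken over the constants, and this is not automatic.

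Case $d=2$ is not proved. The assertion that ${\rm PSL}_2$-invariance ``forces $V$ to be the pullback of a curve under a ${\rm PSL}_2$-equivariant quotient, reducing the analysis to a configuration of Case $d=1$ type'' is a hope, not an argument; this case is precisely where the work lies. Your alternative suggestion of invoking Fact~\ref{descent} is also misplaced: that fact specializes \emph{constant parameters}, it does not reduce the order of a differential subvariety.

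Finally, the descent step is logically backwards. You note $G_{K(y)} \subseteq G_{\mathbb{C}(y)}$ and then conclude that $G_{K(y)}$ proper implies $G_{\mathbb{C}(y)}$ proper; that does not follow from the inclusion. The correct descent uses invariance of the Picard--Vessiot group under algebraically closed constant-field extension: if the Riccati has no algebraic solution over $\mathbb{C}(y)$ then $G_{\mathbb{C}(y)} = {\rm SL}_2$, hence also $G_{K^{\mathrm{alg}}(y)} = {\rm SL}_2$, so no algebraic solution exists over $K(y)^{\mathrm{alg}}$ either.
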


A particular case of strong minimality already appear in the work of H. Umemura on the irreducibility of Painlev\'e equations \cite{Umemura1988, Umemura1990}. Following the classification of transcendency of solutions of differential equations started by Painlev\'e in his 21st {\it Le\c{c}on de Stockholm} \cite{painleve}, Umemura defined the notion of irreducible equation (\cite{Umemura1988}) and proved the irreducibility of the first Painlev\'e equation. Then, for a second order differential equation without algebraic solutions, irreducibility is a consequence of the condition (J) (see \cite[page 169]{Umemura1990}). 

Umemura's condition (J) is equivalent to strong minimality. Umemura's theorem is stated in the case of second order differential equation. It is not hard to generalize Umemura's work to higher order equations. A general statement is given by: {\it If a differential equation defines a strongly minimal set, then its generic solution is not contained in a differential field obtained by successive iteration of strongly normal extensions and extensions by solutions of lower order differential equation.} While not published anywhere, all the ingredients needed to prove this statement can be found in \cite[Appendix A]{NagPil2} and \cite[Chapter 3]{Nag}.

\subsection{Fuchsian triangle groups}\label{trainglegp} We will now describe the main object of study of the paper \cite{CasFreNag}. We will restrict ourselves to Fuchsian triangle groups. For details about other Fuchsian groups of genus zero, we direct the reader to Section 2 of that paper. 

Let $\Gamma_{(k,l,m)}\subset {\rm PSL}_2(\mathbb{R})$ be a Fuchsian triangle group\footnote{In the remainder of the text all triangle groups will be assumed Fuchsian.}, that is assume that $\Gamma_{(k,l,m)}$ is a Fuchsian group of the first kind with signature is $(0;k,l,m)$. It is known that $\Gamma_{(k,l,m)}$ is a subgroup of index 2 of the group generated by the reflections in the sides of a {\em hyperbolic} triangle $\bigtriangleup(k,l,m)$; namely circular triangles such that the parameters $k,l,m$ satisfying the relation $\frac{1}{k}+\frac{1}{l}+\frac{1}{m}<1$ and such that $k,l,m\in\mathbb{N}\cup\{\infty\}$. The group $\Gamma_{(k,l,m)}$ has the following presentation
\[\Gamma_{(k,l,m)}=\gen{g_1,g_2,g_3\in {\rm PSL}_2(\mathbb{R})\;:\;g_1^{k}=g_2^{l}=g_3^{m}=g_1g_2g_3=I}\] 
and acts on $\mathbb{H}$ by linear fractional transformation: for 
$\begin{pmatrix}
    a & b \\
    c & d
  \end{pmatrix}
  \in \Gamma_{(k,l,m)}$ and $\tau\in\mathbb{H}$
\[ \begin{pmatrix}
    a & b \\
    c & d
  \end{pmatrix}\cdot\tau=\frac{a\tau+b}{c\tau+d}.
\]
We will assume, without loss of generality, that $2\leq k\leq l\leq m\leq \infty$. 

\begin{exmp}
The group ${\rm PSL}_2(\mathbb{Z})$ is a triangle group of type $(0;2,3,\infty)$.  It is well known that the generators of ${\rm SL}_2(\mathbb{Z})$ can be taken to be
\[ T=\begin{pmatrix}
    1 & 1 \\
    0 & 1
  \end{pmatrix} \;\;\;,\;\;\; S=\begin{pmatrix}
    0 & -1 \\
    1 & 0
  \end{pmatrix}.\]
By setting $g_1=-S$, $g_2=-T^{-1}S$ and $g_3=T$ we have that
\[{\rm SL}_2(\mathbb{Z})=\gen{g_1,g_2,g_3\;:\;g_1^{2}=g_2^3=g_1g_2g_3=-I}.\]
\end{exmp}
We have that the Schwarz triangle function $J_{(k,l,m)}(t)$ for a hyperbolic triangle $\bigtriangleup(k,l,m)$ satisfies the Schwarzian equation $(\star)$ with the rational function $R_{J}$ given as in Example \ref{ExampleTriangle} and with $\alpha=k,\beta=l,\gamma=m$ (cf. \cite[Chapter 5]{AbloFoka}). Very importantly, the functions $J_{(k,l,m)}(t)$ are automorphic uniformizers for $\Gamma_{(k,l,m)}$:
\[J_{(k,l,m)}(g\cdot \tau)=J_{(k,l,m)}(\tau)\;\;\;\text{ for all }\;g\in\Gamma_{(k,l,m)}\text{ and } \tau\in\mathbb{H}.\]

The first two main results of \cite{CasFreNag} are as follows
\begin{fct}{\cite[Theorem 2.12]{CasFreNag}}\label{hyperbolic}
Assume that $\bigtriangleup(k,l,m)$ is a hyperbolic triangle, that is assume that $k,l,m\in\mathbb{N}\cup\{\infty\}$ and satisfy the relation $\frac{1}{k}+\frac{1}{l}+\frac{1}{m}<1$. Then the set defined by the ODE $(\star)$ (with $R_{\alpha,\beta,\gamma}$ as in Example \ref{ExampleTriangle} and setting $\alpha=k,\beta=l,\gamma=m$) is strongly minimal and so geometrically trivial.
\end{fct}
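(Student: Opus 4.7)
The plan is to reduce the statement to the single fact that the Riccati equation $\frac{du}{dy} + u^2 + \frac{1}{2}R_{k,l,m}(y) = 0$ has no solution in $\mathbb{C}(y)^{\mathrm{alg}}$. Indeed, by Fact \ref{thm:strongminimality}, absence of an algebraic Riccati solution is exactly what is needed to conclude that $(\star)$ defines a strongly minimal set. Since $(\star)$ is an autonomous order $3$ equation defined over $\mathbb{C}$, Fact \ref{geo} automatically upgrades strong minimality to geometric triviality. So the whole theorem boils down to ruling out an algebraic solution of the Riccati equation associated with $R_{k,l,m}$.

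By Proposition \ref{riccati}, this is equivalent to showing that the differential Galois group of the linear equation $\frac{d^2\psi}{dy^2}+\frac{1}{2}R_{k,l,m}(y)\psi = 0$ over $\mathbb{C}(y)$ is all of $\mathrm{PSL}_2(\mathbb{C})$ (equivalently, its $\mathrm{SL}_2$-Galois group is all of $\mathrm{SL}_2(\mathbb{C})$). This linear equation is Fuchsian of hypergeometric type: it has three regular singular points at $0$, $1$, $\infty$ with exponent differences $1/k,\ 1/l,\ 1/m$. Schlesinger's density theorem then identifies the differential Galois group with the Zariski closure in $\mathrm{PSL}_2(\mathbb{C})$ of its monodromy representation, and a classical computation (essentially due to Schwarz, as summarized in Section \ref{trainglegp}) identifies this monodromy, up to conjugation in $\mathrm{PSL}_2(\mathbb{C})$, with the triangle group $\Gamma_{(k,l,m)} \subset \mathrm{PSL}_2(\mathbb{R})$.

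The core task is therefore to show that a hyperbolic triangle group $\Gamma_{(k,l,m)}$ is Zariski dense in $\mathrm{PSL}_2(\mathbb{C})$. Here the hypothesis $\frac{1}{k}+\frac{1}{l}+\frac{1}{m}<1$ is used decisively: under this assumption the triangle group is a discrete, non-elementary Fuchsian group and, in particular, contains two non-commuting hyperbolic elements with distinct fixed-point pairs on $\partial \mathbb{H}$. One now inspects the classification of proper algebraic subgroups of $\mathrm{PSL}_2(\mathbb{C})$: each is either contained in a Borel, contained in the normalizer of a maximal torus, or finite. The first is excluded because hyperbolic triangle groups are not solvable (they contain free subgroups of rank $2$); the second is excluded because two non-commuting hyperbolic elements with distinct fixed-point sets cannot simultaneously stabilize a pair of points in $\mathbb{P}^1$; the third is excluded by infiniteness. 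Hence the Zariski closure must be all of $\mathrm{PSL}_2(\mathbb{C})$.

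The expected main obstacle is less a clever argument than a careful invocation of classical inputs: one must correctly match the hypergeometric monodromy with $\Gamma_{(k,l,m)}$ in the hyperbolic regime (cases with $\sum 1/k_i = 1$ or $> 1$ would give Euclidean or spherical triangle groups and a very different conclusion), and then execute the short algebraic-subgroup check above. Once Zariski density is established, Proposition \ref{riccati}, Fact \ref{thm:strongminimality} and Fact \ref{geo} chain together to yield both strong minimality and geometric triviality without further work.
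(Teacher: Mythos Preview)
The paper does not prove this statement: it is recorded as a Fact cited from \cite{CasFreNag}, so there is no in-text proof to compare against. Your argument is nonetheless correct and self-contained. The reduction to the Riccati hypothesis via Fact~\ref{thm:strongminimality} and then to geometric triviality via Fact~\ref{geo} is exactly right, and your verification of the Riccati hypothesis through Schlesinger's density theorem and Zariski density of the hyperbolic triangle group in $\mathrm{PSL}_2(\mathbb{C})$ is a clean and classical route. The case analysis on proper algebraic subgroups of $\mathrm{PSL}_2(\mathbb{C})$ is accurate; you might add one line noting that the identification of the projective monodromy with $\Gamma_{(k,l,m)}$ also covers the parabolic case (some of $k,l,m$ equal to $\infty$), which is still non-elementary.

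It is worth observing that the present paper implicitly offers a second, purely computational route to the same conclusion. Proposition~\ref{pr:kimura} lists, via Kimura's theorem, \emph{all} parameter triples $(\alpha,\beta,\gamma)$ for which the Riccati hypothesis can fail; one then checks directly that no hyperbolic triple $(k,l,m)$ with $k,l,m\in\mathbb{N}\cup\{\infty\}$ and $\tfrac{1}{k}+\tfrac{1}{l}+\tfrac{1}{m}<1$ appears in that list. Your monodromy argument is more conceptual and explains \emph{why} the hyperbolic case is excluded (the group is too large), whereas the Kimura approach is a finite verification that trades geometric insight for explicit tabulation. Either proves the Fact.
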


\begin{fct} \label{generalschwarzian}
Assume that the set defined by the Schwarzian equation ($\star$) is strongly minimal and let $K$ be any differential field extension of $\mathbb{C}$. Then for any distinct solutions $y_1,\ldots,y_n$ of ($\star$) not in $K^{alg}$, if there is an algebraic relation between
\[y_1,y'_1,y''_1\cdots,y_n,y'_n,y''_n\]
over $K$, then there is a polynomial $P\in\mathbb{C}[X_1,X_2]$ and some $i<j$ such that
\[P(y_i,y_j)=0\] 
\end{fct}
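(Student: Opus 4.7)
My plan proceeds in three stages, building on the structural results already given in the preceding sections.

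\emph{Stage 1 (reduction to a pair).} The equation $(\star)$ is of order $3 > 1$ and defined over $\mathbb{C}$, so by Fact~\ref{geo} the hypothesis that $(\star)$ defines a strongly minimal set upgrades to geometric triviality of that set. Hence an algebraic dependence over $K$ of the full tuple $y_1, y'_1, y''_1, \ldots, y_n, y'_n, y''_n$ collapses, by the very definition of geometric triviality, to an algebraic dependence over $K$ of some binary sub-tuple $y_i, y'_i, y''_i, y_j, y'_j, y''_j$.

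\emph{Stage 2 (algebraicity over $\mathbb{C}\langle y_i\rangle$).} For such a pair I apply Fact~\ref{thm:strongminimality} with the differential field $K\langle y_i\rangle \supset \mathbb{C}$ in the role of the base field: the transcendence degree $\trdeg_{K\langle y_i\rangle} K\langle y_i, y_j\rangle$ is $0$ or $3$, and since $\trdeg_K K\langle y_i\rangle = 3$ (as $y_i \notin K^{\mathrm{alg}}$) while the full tuple has transcendence degree strictly below $6$ over $K$, the existing algebraic relation rules out $3$. Hence $y_j \in K\langle y_i\rangle^{\mathrm{alg}}$. To descend the base further down to $\mathbb{C}$, I invoke the descent principle Fact~\ref{descent}: the condition that $(y_i, y_j)$ lies in a proper algebraic subvariety of the solution locus of $(\star)$ is a $K$-definable condition inside a $\mathbb{C}$-definable family of constructibles, so the genericity of $y_j$ within the strongly minimal set forces the corresponding constructible to be defined over $\mathbb{C}\langle y_i\rangle$, giving $y_j \in \mathbb{C}\langle y_i\rangle^{\mathrm{alg}} = \mathbb{C}(y_i, y'_i, y''_i)^{\mathrm{alg}}$.

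\emph{Stage 3 (removing the derivatives).} It remains to produce $P \in \mathbb{C}[X_1, X_2]$ with $P(y_i, y_j) = 0$, starting from an algebraic dependence whose coefficient field may still involve the derivatives $y'_i, y''_i$. The essential tool is the Picard--Vessiot picture of Proposition~\ref{riccati}: passing to the inverse functions $t_i, t_j$ attached to $y_i, y_j$, the pair $(t_i, t_j)$ lies in a common Picard--Vessiot extension of $\mathbb{C}(y)$ with Galois group $\mathrm{PSL}_2(\mathbb{C})$, so they are related by a M\"obius transformation $t_j = \frac{a t_i + b}{c t_i + d}$ with $a, b, c, d \in \mathbb{C}$. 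Elimination of $t_i$ and $t_j$ between this identity and the defining equations of the inverses yields a polynomial annihilator of $(y_i, y_j)$ with coefficients in $\mathbb{C}$. The main obstacle I anticipate is precisely this last stage: the M\"obius identity naturally lives in the inverse picture and involves auxiliary parameters $a,b,c,d$, and turning it into a polynomial in $\mathbb{C}[X_1, X_2]$ that is genuinely free of $y'_i, y''_i$ requires a careful elimination argument together with a final use of Fact~\ref{descent} to ensure that the derivative data on the coefficient side contributes no further freedom beyond what is already captured by the $\mathrm{PSL}_2(\mathbb{C})$-orbit of $y_i$.
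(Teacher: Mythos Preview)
Your Stage 1 is correct and is exactly what the paper does: strong minimality plus Fact~\ref{geo} gives geometric triviality, hence a dependent pair over $K$.

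For the remaining steps the paper does not argue directly: it simply cites \cite[Theorem~5.10]{CasFreNag} (together with \cite[Remark~5.14]{CasFreNag}), which takes a pair of interalgebraic solutions and produces the polynomial $P\in\mathbb{C}[X_1,X_2]$. You are effectively trying to reprove that theorem, and both of your remaining stages have genuine gaps.

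In Stage~2, your appeal to Fact~\ref{descent} is a misapplication. That fact lets you specialize \emph{constant} parameters lying in a subfield $F\subset\mathcal{C}$ to other constants along a Zariski-dense set; it says nothing about eliminating parameters from an arbitrary differential field $K\supset\mathbb{C}$. The passage from ``$y_j\in K\langle y_i\rangle^{\mathrm{alg}}$'' to ``$y_j\in\mathbb{C}\langle y_i\rangle^{\mathrm{alg}}$'' is true here, but it needs a stability-theoretic argument (canonical bases, or the fact that for a geometrically trivial strongly minimal set defined over the algebraically closed constants every finite-to-finite correspondence descends to the base), not Fact~\ref{descent}.

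Stage~3 is where the real problem lies. Proposition~\ref{riccati} says that ${\rm PSL}_2(\mathbb{C})$ acts transitively on the solutions of $S_y(t)=R(y)$ \emph{for one fixed independent variable $y$}. Your $t_i$ and $t_j$ are inverses of two different elements $y_i,y_j$ of $\mathcal U$; they do not live in a common Picard--Vessiot extension of a single $\mathbb{C}(y)$, so there is no M\"obius relation $t_j=\frac{at_i+b}{ct_i+d}$ to read off. (Indeed, if one adjoins $t$ with $t'=1$, both ``inverses'' are just $t$ itself, and the identity $t=t$ carries no information about $y_i$ versus $y_j$.) Eliminating the derivatives from the relation is exactly the substantive content of \cite[Theorem~5.10]{CasFreNag}; it requires a more careful analysis of the ${\rm PSL}_2$-action on the solution set (closer in spirit to the vector-field computation carried out later in Section~\ref{bialgcurves}) and is not captured by your sketch. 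You correctly flag this as the main obstacle, but the proposed mechanism does not overcome it.
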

This fact is obtained by putting together several results from \cite{CasFreNag}: If the set defined by ($\star$) is strongly minimal, it follows from Fact \ref{geo} that it also is geometrically trivial. So for any distinct solutions $y_1,\ldots,y_n$ of ($\star$) not in $K^{alg}$, if there is an algebraic relation between $y_1,y'_1,y''_1\cdots,y_n,y'_n,y''_n$ over $K$, then for some $i<j$ there is an algebraic relation between $y_i,y'_i,y''_i,y_j,y'_j,y''_j$ over $K$ (indeed over $\mathbb{C}$). But then \cite[Theorem 5.10]{CasFreNag} gives the desired result (see also \cite[Remark 5.14]{CasFreNag}).

A natural diving lines among the triangle groups is the notion of  arithmeticity. This notion plays a central role when tackling functional transcendence questions. We now give some details (for more see \cite{Vigneras}).
 
Let $F$ be a totally real number field of degree $k+1$ and denote by $\mathcal{O}_F$ its ring of integers. Let $A$ be a quaternion algebra over $F$ that is ramified at exactly one infinite place, that is,
\[A\otimes_{\mathbb{Q}} \mathbb{R}\simeq M_{2}(\mathbb{R})\times \mathcal{H}^k\]
where $\mathcal{H}$ is Hamilton's quaternion algebra $\left(\frac{-1,-1}{\mathbb{R}}\right)$. Let $\rho$ be the unique embedding of $A$ into $M_{2}(\mathbb{R})$ and let $\mathcal{O}$ be an order in $A$, namely a finitely generated $\mathcal{O}_F$-module such that $\mathcal{O}\otimes_{\mathcal{O}_F} F\simeq A$. The image $\rho(\mathcal{O}^1)$ of the norm-one group of $\mathcal{O}$ under $\rho$ is a discrete subgroup of ${\rm SL}_2(\mathbb{R})$. We denote by $\Gamma(A,\mathcal{O})$ the projection in ${\rm PSL}_2(\mathbb{R})$ of the group $\rho(\mathcal{O}^1)$.
\begin{defn}
A triangle group $\Gamma_{(k,l,m)}$ is said to be arithmetic if it is commensurable with a group of the form $\Gamma(A,\mathcal{O})$.
\end{defn}
Recall that two subgroups $G$ and $H$ of ${\rm PSL}_2(\mathbb{R})$ are commensurable, denoted by $G\sim H$, if their intersection $G\cap H$ has finite index in both $G$ and $H$. We now explain how commensurability and arithmeticity give rise to polynomials that violates the algebraic independence of solutions. First let us recall some facts about commensurable Fuchsian groups.
\begin{fct}\cite[Proposition 1.30 and 1.31]{Shi}\label{commenFact}
Let $\Gamma$ and $\Gamma_1$ be two commensurable Fuchsian groups, i.e., assume $\Gamma\sim\Gamma_1$. Then 
\begin{enumerate}
    \item $\Gamma$ and $\Gamma_1$ have the same set of cusps; and
    \item$\Gamma$ is of the first kind if and only if $\Gamma_1$ is of the first kind.
\end{enumerate}
\end{fct}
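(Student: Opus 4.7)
The plan is to reduce both parts to a single principle: passing to a finite-index subgroup preserves both the cusp set and the limit set. Since $\Gamma \cap \Gamma_1$ has finite index in both $\Gamma$ and in $\Gamma_1$, once these two preservation statements are established, the equalities for $\Gamma$ and $\Gamma_1$ follow by applying them twice and meeting in the middle through $\Gamma \cap \Gamma_1$.

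For Part (1), I would argue directly at the level of parabolic fixed points. Suppose $c \in \mathbb{P}^1(\mathbb{R})$ is a cusp of $\Gamma$, realized as the fixed point of some parabolic $g \in \Gamma$. Because $[\Gamma : \Gamma \cap \Gamma_1] < \infty$, some positive power $g^n$ lies in $\Gamma \cap \Gamma_1 \subseteq \Gamma_1$. Conjugating $g$ to the normal form $z \mapsto z+1$ shows that $g^n$ is again parabolic and fixes the same point $c$, so $c$ is a cusp of $\Gamma_1$. The symmetric argument, with the roles of $\Gamma$ and $\Gamma_1$ swapped, completes the equality of cusp sets.

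For Part (2), I would use the standard characterization that a Fuchsian group is of the first kind if and only if its limit set $\Lambda$ is all of $\mathbb{P}^1(\mathbb{R})$. The crucial lemma to prove is that if $H$ has finite index in a Fuchsian group $G$, then $\Lambda(H) = \Lambda(G)$. The inclusion $\Lambda(H) \subseteq \Lambda(G)$ is immediate. For the reverse, fix a base point $z_0 \in \mathbb{H}$ and right coset representatives $r_1,\dots,r_k$ so that $G = \bigsqcup_{i} H r_i$. Given $x \in \Lambda(G)$ with $g_n z_0 \to x$, decompose $g_n = h_n r_{i_n}$, extract by pigeonhole a subsequence with constant $r_{i_n} = r_i$, and conclude $h_n (r_i z_0) \to x$ with $h_n \in H$; since $\Lambda(H)$ is independent of the base point, $x \in \Lambda(H)$. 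Applying this lemma to $H = \Gamma \cap \Gamma_1$ inside each of $\Gamma$ and $\Gamma_1$ yields $\Lambda(\Gamma) = \Lambda(\Gamma \cap \Gamma_1) = \Lambda(\Gamma_1)$, so the first-kind property transfers.

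The main obstacle is not conceptual difficulty but the careful setup of the limit-set lemma. One has to be mindful that the base point is chosen in $\mathbb{H}$ (not on the boundary) and that the translate $r_i z_0$ remains in $\mathbb{H}$, so that the pigeonhole argument indeed produces a sequence in the $H$-orbit of a single interior point. Once this routine care is taken, both assertions follow without any further input from the arithmetic or geometric structure of the Fuchsian groups involved.
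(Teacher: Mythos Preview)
Your argument is correct and follows the standard route: for cusps, pass to a power of a parabolic element to land in the finite-index intersection; for the first-kind property, show that a finite-index subgroup has the same limit set via a pigeonhole on coset representatives. Both steps are sound as written.

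However, there is nothing to compare against: the paper does not prove this statement. It is recorded as a \emph{Fact} with a citation to Shimura's book (Propositions~1.30 and~1.31) and is used as a black box in the discussion of commensurators and $\Gamma$-special polynomials. So your proposal is not an alternative to the paper's proof but rather a self-contained justification of a result the authors chose to quote from the literature.
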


Let $\Gamma=\Gamma_{(k,l,m)}$ be a triangle group and let $\text{Comm}(\Gamma)$ be the commensurator of $\Gamma$, namely
\[\text{Comm}(\Gamma)=\{g\in {\rm PSL}_2(\mathbb{R})\;:\; g\Gamma g^{-1}\sim\Gamma\}.\]
If $g\in\text{Comm}(\Gamma)\setminus \Gamma$ then by Fact \ref{commenFact}, the intersection $\Gamma_g= g\Gamma g^{-1}\cap\Gamma$ is a Fuchsian group of the first kind with the same set of cusps as $\Gamma$. Since the functions $J_{(k,l,m)}(t)$ and $J_{(k,l,m)}(g^{-1}t)$ are respective automorphic uniformizers for $\Gamma$ and $g\Gamma g^{-1}$, we have that they also are automorphic functions for $\Gamma_g$. 

A classical theorem of Poincar\'e (cf. \cite[Chap. 5 Section 6]{Lehner}) states that any two automorphic functions for a Fuchsian group are algebraically dependent over $\mathbb{C}$. So there is a polynomial $\Phi_g\in\mathbb{C}[X,Y]$, such that $\Phi_g(J_{(k,l,m)}(t),J_{(k,l,m)}(gt))=0$. We call such polynomial a $\Gamma_{(k,l,m)}$-special polynomial and say that the uniformizers are in $\text{Comm}(\Gamma)$-correspondence. The following result of Margulis gives a characterization of arithmeticity in terms of $\Gamma_{(k,l,m)}$-special polynomials. 

\begin{fct}\cite{Marg}\label{ArithmeticityTHM}
The group $\Gamma_{(k,l,m)}$ is arithmetic if and only if $\Gamma_{(k,l,m)}$ has infinite index in $\text{Comm}(\Gamma)$ and so there are infinitely many $\Gamma_{(k,l,m)}$-special polynomials.
\end{fct}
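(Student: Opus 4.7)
The plan is to reduce this to Margulis's commensurator arithmeticity theorem specialized to triangle lattices in $\text{PSL}_2(\m R)$, and then to derive the consequence on special polynomials. I would split the biconditional into its two directions and treat the polynomial count as a separate final step.

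For the direction \emph{arithmetic $\Rightarrow$ infinite index in $\text{Comm}(\Gamma)$}, I would start from the hypothesis $\Gamma_{(k,l,m)} \sim \Gamma(A, \mathcal O)$ and use that commensurability is an equivalence relation, so both groups have the same commensurator. For any $\alpha \in A^*$ of positive reduced norm, $\rho(\alpha) \in \text{GL}_2(\m R)$ conjugates $\rho(\mathcal O^1)$ to a commensurable lattice: the intersection contains the norm-one elements of the order $\mathcal O \cap \alpha \mathcal O \alpha^{-1}$, which is of finite index in both orders. Since $A^*(F)$ modulo scalars is infinite and maps into $\text{Comm}(\Gamma(A, \mathcal O))$, this supplies infinitely many cosets of $\Gamma$ in its commensurator.

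The reverse direction, \emph{infinite index in $\text{Comm}(\Gamma) \Rightarrow$ arithmetic}, is the main obstacle. This is the deep content of Margulis's commensurator criterion, which asserts that an irreducible finite-covolume lattice $\Gamma$ in a connected semisimple Lie group with trivial center and no compact factors is arithmetic exactly when $[\text{Comm}(\Gamma):\Gamma] = \infty$. I would invoke this as a black box for $G = \text{PSL}_2(\m R)$ and the triangle lattice $\Gamma_{(k,l,m)}$ (which is Fuchsian of the first kind, hence of finite covolume, in the hyperbolic regime). There is no elementary substitute available in rank one: the proof relies on commensurator superrigidity, which in this rank-one setting requires harmonic-map techniques of Corlette-Gromov-Schoen type rather than the algebraic arguments usable in higher rank.

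Finally, to obtain infinitely many $\Gamma_{(k,l,m)}$-special polynomials, I would pick representatives $g_1, g_2, \ldots$ of distinct right cosets of $\Gamma_{(k,l,m)}$ in $\text{Comm}(\Gamma_{(k,l,m)})$ and apply Poincar\'e's theorem to the finite-index Fuchsian subgroup on which both $J_{(k,l,m)}(t)$ and $J_{(k,l,m)}(g_i t)$ are automorphic, producing $\Phi_{g_i}(X,Y) \in \m C[X,Y]$ with $\Phi_{g_i}(J_{(k,l,m)}(t), J_{(k,l,m)}(g_i t)) = 0$. To verify that distinct cosets give distinct polynomials, I would argue that an equality $\Phi_{g_i} = \Phi_{g_j}$ forces $J_{(k,l,m)}(g_i t)$ and $J_{(k,l,m)}(g_j t)$ to be algebraic conjugates over $\m C(J_{(k,l,m)}(t))$, at which point the $\Gamma_{(k,l,m)}$-invariance of $J_{(k,l,m)}$ forces $g_j g_i^{-1} \in \Gamma_{(k,l,m)}$, contradicting the choice of distinct coset representatives.
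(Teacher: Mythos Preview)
Your approach matches the paper's exactly: the paper does not give its own proof but simply records that this is a special case of Margulis's commensurator arithmeticity theorem and refers to Zimmer for the argument. Your decomposition into the easy direction (arithmetic $\Rightarrow$ infinite index, via conjugation by $A^*$) and the hard direction (infinite index $\Rightarrow$ arithmetic, via Margulis as a black box) is the standard one and is correct.

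There is, however, a genuine slip in your final paragraph. You claim that $\Phi_{g_i}=\Phi_{g_j}$ forces $g_jg_i^{-1}\in\Gamma_{(k,l,m)}$. That is false: the equality of polynomials only says that $J_{(k,l,m)}(g_i t)$ and $J_{(k,l,m)}(g_j t)$ are both roots of the same polynomial $\Phi_{g_i}(J_{(k,l,m)}(t),Y)=0$, i.e.\ they are Galois conjugates over $\m C(J_{(k,l,m)}(t))$, not that they are equal as functions. Indeed for the classical modular $j$-function the single polynomial $\Phi_N$ already accounts for several distinct cosets of $\text{PSL}_2(\m Z)$ in its commensurator. The $\Gamma$-invariance of $J_{(k,l,m)}$ only yields $g_jg_i^{-1}\in\Gamma_{(k,l,m)}$ once you know $J_{(k,l,m)}(g_it)=J_{(k,l,m)}(g_jt)$ identically, which you have not established.

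The fix is easy and close to what you wrote: a given $\Phi_g$ has some fixed degree $d_g$ in $Y$, so $\Phi_g(J_{(k,l,m)}(t),Y)=0$ has at most $d_g$ roots; since $J_{(k,l,m)}(ht)=J_{(k,l,m)}(h't)$ identically iff $h'h^{-1}\in\Gamma_{(k,l,m)}$, each special polynomial accounts for at most finitely many right cosets of $\Gamma_{(k,l,m)}$ in $\text{Comm}(\Gamma_{(k,l,m)})$. Hence infinitely many cosets force infinitely many distinct special polynomials.
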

This is a special case of a more general result of Margulis. For a proof we refer the reader to \cite[Chapter 6]{Zimmer}.
We can now state the next two main result from \cite{CasFreNag}. 
\begin{fct}{\cite[Theorem 2.13]{CasFreNag}}\label{IndArithmetic}
Suppose that $\Gamma=\Gamma_{(k,l,m)}$ is arithmetic and suppose that $j_{\Gamma}(g_{1}t),...,j_{\Gamma}(g_{n}t)$ are distinct solutions of the Schwarzian equation $(\star)$ that are pairwise not in $\text{Comm}(\Gamma)$-correspondence. Then the $3n$ functions $$j_{\Gamma}(g_{1}t),j'_{\Gamma}(g_{1}t),j''_{\Gamma}(g_{1}t),\ldots,j_{\Gamma}(g_{n}t),j'_{\Gamma}(g_{n}t),j''_{\Gamma}(g_{n}t)$$ are algebraically independent over ${\mathbb C}(t)$. 
\end{fct}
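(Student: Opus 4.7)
The plan is to proceed by contradiction, reducing the $3n$-variable algebraic independence statement to a pairwise statement via strong minimality, and then converting the pairwise relation into a commensurator condition via the classical theory of automorphic functions.

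First I would observe that since $\Gamma_{(k,l,m)}$ is arithmetic, it is in particular a Fuchsian triangle group associated with a hyperbolic triangle $\bigtriangleup(k,l,m)$, so by Fact \ref{hyperbolic} the set defined by the Schwarzian equation $(\star)$ is strongly minimal (and geometrically trivial by Fact \ref{geo}). Setting $K=\mathbb{C}(t)$ and $y_i = j_\Gamma(g_i t)$, each $y_i$ is transcendental over $K$ because $g_i t$ is a Möbius transformation and $j_\Gamma$ is not algebraic. Suppose, for contradiction, that the $3n$ functions $j_\Gamma(g_i t), j'_\Gamma(g_i t), j''_\Gamma(g_i t)$ are algebraically dependent over $\mathbb{C}(t)$. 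Then Fact \ref{generalschwarzian} immediately produces indices $i<j$ and a nonzero polynomial $P\in\mathbb{C}[X_1,X_2]$ with
\[
P\bigl(j_\Gamma(g_i t),\,j_\Gamma(g_j t)\bigr) = 0.
\]

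The next step is to translate this algebraic relation into the statement that $g_j g_i^{-1} \in \mathrm{Comm}(\Gamma)$. Writing $s = g_i t$ and $h = g_j g_i^{-1}$, the relation becomes $P(j_\Gamma(s), j_\Gamma(hs)) = 0$, so $j_\Gamma(hs)$ is algebraic over $\mathbb{C}(j_\Gamma(s))$. Let $F = \mathbb{C}(j_\Gamma(s), j_\Gamma(hs))$; this is a finite extension of $\mathbb{C}(j_\Gamma(s))$, which in turn is the field of $\Gamma$-automorphic functions. Hence $F$ is the field of automorphic functions for some finite-index subgroup $\Gamma_F \leq \Gamma$. Since $j_\Gamma(hs)\in F$ is invariant under $\Gamma_F$ and, as a function of $s$, is clearly invariant under $h^{-1}\Gamma h$, one checks directly that $\Gamma_F \subseteq h^{-1}\Gamma h$. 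Thus $\Gamma_F \subseteq \Gamma \cap h^{-1}\Gamma h$ is of finite index in $\Gamma$, and by symmetry (or by repeating the argument swapping the roles of the two generators of $F$) also of finite index in $h^{-1}\Gamma h$. This is exactly the assertion that $h = g_j g_i^{-1} \in \mathrm{Comm}(\Gamma)$.

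From $h\in\mathrm{Comm}(\Gamma)$ one obtains a $\Gamma$-special polynomial $\Phi_h$ relating $j_\Gamma(s)$ and $j_\Gamma(hs)$ as discussed before Fact \ref{ArithmeticityTHM}, and re-substituting $s = g_i t$ shows that $j_\Gamma(g_i t)$ and $j_\Gamma(g_j t)$ are in $\mathrm{Comm}(\Gamma)$-correspondence, contradicting the hypothesis of pairwise non-correspondence. The main obstacle in this strategy is the middle step: the algebraic-to-commensurability passage requires the classical fact that the field of $\Gamma$-automorphic meromorphic functions on $\mathbb{H}$ is generated by $j_\Gamma$ (true here since $\Gamma\backslash\mathbb{H}$ has genus zero) and a clean Galois-type correspondence between finite extensions of this field and finite-index subgroups of $\Gamma$; the rest of the argument is a relatively formal deployment of Facts \ref{hyperbolic} and \ref{generalschwarzian}.
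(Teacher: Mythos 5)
This Fact is imported verbatim from \cite[Theorem 2.13]{CasFreNag}; the present paper gives no proof of it, so the only comparison available is with the proof in that paper, whose architecture you have essentially reconstructed: Fact \ref{hyperbolic} gives strong minimality, Fact \ref{generalschwarzian} (strong minimality plus geometric triviality plus Theorem 5.10 of \cite{CasFreNag}) reduces any algebraic dependence over $\mathbb{C}(t)$ to a single relation $P\bigl(j_\Gamma(g_it),j_\Gamma(g_jt)\bigr)=0$ with $P$ over $\mathbb{C}$, and the remaining task is to show that such a relation forces $h=g_jg_i^{-1}\in{\rm Comm}(\Gamma)$. In \cite{CasFreNag} this last step is Lemmas 5.15 and 5.16, exactly the ingredient the present paper quotes in the proof of Theorem \ref{nonarith}. (That you never use arithmeticity is not a defect; it only enters the phrasing of the statement, via Fact \ref{ArithmeticityTHM}.)

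The gap is in your sketch of that last step. Your argument --- $\mathbb{C}\bigl(j_\Gamma(s),j_\Gamma(hs)\bigr)$ is finite over the field $\mathbb{C}(j_\Gamma(s))$ of $\Gamma$-automorphic functions, hence attached to a finite-index subgroup $\Gamma_F\le\Gamma$ contained in $h^{-1}\Gamma h$, whence $h\in{\rm Comm}(\Gamma)$ --- presupposes that $j_\Gamma(hs)$ is a single-valued meromorphic function on all of $\mathbb{H}$, invariant under the \emph{Fuchsian} group $h^{-1}\Gamma h$; that is, it presupposes $h\in{\rm PSL}_2(\mathbb{R})$. Even the finite-index claim uses this: it comes from $\Gamma$ permuting the finitely many conjugates of $j_\Gamma(hs)$ over $\mathbb{C}(j_\Gamma(s))$ inside the field of meromorphic functions on $\mathbb{H}$, and this breaks down if the domain of $j_\Gamma(h\,\cdot)$ is not $\Gamma$-stable. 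But the solutions of $(\star)$ are precisely the germs $j_\Gamma(gt)$ with $g\in{\rm PSL}_2(\mathbb{C})$, so in the generality in which the statement is meant and used, $h$ need not preserve $\mathbb{H}$, while ${\rm Comm}(\Gamma)\subset{\rm PSL}_2(\mathbb{R})$ by definition; one must therefore first prove that an algebraic relation between $j_\Gamma(s)$ and $j_\Gamma(hs)$ already forces $h$ to preserve $\mathbb{H}$. This is where genuine work happens: in \cite{CasFreNag} it is Lemma 5.15, and it uses that $\Gamma$ is of the first kind, so that $\partial\mathbb{H}$ is a natural boundary for $j_\Gamma$; a relation with non-real $h$ would allow one to continue $j_\Gamma\circ h$, as an algebraic function of $j_\Gamma$, across the arc $h^{-1}(\partial\mathbb{H})\cap\mathbb{H}$, hence to continue $j_\Gamma$ across an arc of $\partial\mathbb{H}$, a contradiction. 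Once $h$ is known to be real, your stabilizer/Galois-correspondence argument is sound (it is in substance Lemma 5.16), so the needed fix is localized; but as written the proposal silently restricts to $g_i\in{\rm PSL}_2(\mathbb{R})$ and does not establish the Fact in full.
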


\begin{fct}{\cite[Theorem 2.14]{CasFreNag}}\label{IndNonArithmetic}
Suppose that $\Gamma=\Gamma_{(k,l,m)}$ is non-arithmetic. Then there is a $s\in\mathbb{N}$ such that if $j_{\Gamma}(g_{1}t),...,j_{\Gamma}(g_{n}t)$ are distinct solutions of the Schwarzian equation $(\star)$ satisfying 
\[\text{tr.deg.}_{\mathbb{C}(t)}\mathbb{C}\gen{t,j_{\Gamma}(g_{1}t)\ldots,j_{\Gamma}(g_{n}t)}=3n,\]
then for all other solutions $j_{\Gamma}(gt)$, except for at most $n\cdot s$, 
\[\text{tr.deg.}_{\mathbb{C}(t)}\mathbb{C}\gen{t,j_{\Gamma}(g_{1}t)\ldots,j_{\Gamma}(g_{n}t),j_{\Gamma}(gt)}=3(n+1).\]
\end{fct}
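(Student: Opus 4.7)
The plan is to combine three ingredients already assembled in the excerpt: the strong minimality of $(\star)$ for hyperbolic triangle groups (Fact \ref{hyperbolic}), the pairwise reduction of algebraic relations (Fact \ref{generalschwarzian}), and Margulis's arithmeticity criterion (Fact \ref{ArithmeticityTHM}). Non-arithmeticity of $\Gamma = \Gamma_{(k,l,m)}$ gives via Fact \ref{ArithmeticityTHM} that $s := [\text{Comm}(\Gamma):\Gamma]$ is finite; this is the constant that will appear in the statement.

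Suppose $j_\Gamma(g_1 t),\ldots,j_\Gamma(g_n t)$ are as in the hypothesis and suppose $j_\Gamma(gt)$ is a further distinct solution with
\[\text{tr.deg.}_{\mathbb{C}(t)}\mathbb{C}\gen{t,j_\Gamma(g_1 t),\ldots,j_\Gamma(g_n t),j_\Gamma(gt)} < 3(n+1).\]
By Fact \ref{hyperbolic} each solution of $(\star)$ has transcendence degree $0$ or $3$ over $\mathbb{C}(t)$; automorphicity of the $j_\Gamma(g_i t)$ and $j_\Gamma(gt)$ rules out the trdeg $0$ case, so none of these solutions lies in $\mathbb{C}(t)^{alg}$. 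Fact \ref{generalschwarzian} then produces a polynomial $P\in\mathbb{C}[X_1,X_2]$ with $P(y_a,y_b)=0$ for some pair of our solutions. The joint algebraic independence of the $j_\Gamma(g_i t)$ forbids both indices from lying among $\{1,\ldots,n\}$, so the relation must take the form $P(j_\Gamma(g_i t), j_\Gamma(gt)) = 0$ for some $i$.

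The heart of the argument is to show that such a polynomial relation forces $gg_i^{-1}\in\text{Comm}(\Gamma)$. As functions of $t$ the two solutions are automorphic for the conjugate triangle groups $g_i^{-1}\Gamma g_i$ and $g^{-1}\Gamma g$, and the field $\mathbb{C}(j_\Gamma(g_i t),j_\Gamma(gt))$ is therefore fixed by their intersection $K_0 := (g_i^{-1}\Gamma g_i)\cap(g^{-1}\Gamma g)$. The hypothesis $P=0$ forces this composite field to have transcendence degree one over $\mathbb{C}$, so by classical Fuchsian theory it is the function field of a cofinite quotient of $\mathbb{H}$ by a Fuchsian group of the first kind $K\supseteq K_0$. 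Since $j_\Gamma(g_i t)$ and $j_\Gamma(gt)$ are fixed by $K$ and the stabilizer of each uniformizer is exactly the corresponding triangle group, one obtains $K\subseteq g_i^{-1}\Gamma g_i$ and $K\subseteq g^{-1}\Gamma g$; comparing cofinite volumes shows that $K$ has finite index in both, giving $g_i^{-1}\Gamma g_i \sim g^{-1}\Gamma g$, equivalently $gg_i^{-1}\in\text{Comm}(\Gamma)$.

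Finally, the function $j_\Gamma(gt)$ depends only on the left coset $\Gamma g$, and the set of $g$ with $gg_i^{-1}\in\text{Comm}(\Gamma)$ is the right coset $\text{Comm}(\Gamma)g_i$, which partitions into exactly $[\Gamma\backslash\text{Comm}(\Gamma)] = s$ distinct left $\Gamma$-cosets. Summing over $i = 1,\ldots,n$ bounds the number of exceptional $g$ by $n\cdot s$, as required. The main obstacle is the commensurability step: establishing the converse to the $\Gamma$-special polynomial construction recalled just before Fact \ref{ArithmeticityTHM}, namely that an honest algebraic relation between two automorphic functions on $\mathbb{H}$ can only come from a common finite-index Fuchsian subgroup of the two stabilizers. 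This is classical but requires some care in identifying the maximal group of automorphisms of a transcendence-degree-one field of meromorphic functions on $\mathbb{H}$, which is where the hypothesis that $\Gamma$ is a (hyperbolic) triangle group—so that its uniformizer has no larger stabilizer—enters.
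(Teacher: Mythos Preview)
Your approach is essentially the same as the one the paper uses. The paper does not prove this Fact directly (it is imported from \cite{CasFreNag}), but the proof of the refinement Theorem~\ref{nonarith} follows exactly your outline: reduce via Fact~\ref{generalschwarzian} to a binary polynomial relation, identify that relation as $\Gamma$-special (i.e., coming from $\mathrm{Comm}(\Gamma)$), and then invoke $[\mathrm{Comm}(\Gamma):\Gamma]=s<\infty$ from Margulis's criterion to count exceptional cosets. The only difference is packaging: where you sketch the commensurability step directly (your third paragraph), the paper cites Lemmas~5.15 and~5.16 of \cite{CasFreNag}, whose content is precisely that a polynomial relation $P(j_\Gamma(g_i t),j_\Gamma(gt))=0$ forces $gg_i^{-1}\in\mathrm{Comm}(\Gamma)$. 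Your sketch of this step is correct in outline; the one point worth tightening is the sentence ``by classical Fuchsian theory it is the function field of a cofinite quotient,'' which is really the statement that a finite algebraic extension of the field of automorphic functions for a Fuchsian group of the first kind is again such a field for a finite-index subgroup---this is standard (Poincar\'e) and is exactly what the cited lemmas unpack.
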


In Section 4 we will explain how one can refine these results in the non-arithmetic case. Finally, we state the Ax-Lidemann-Weierstrass Theorem with derivatives for the triangle groups $\Gamma=\Gamma_{(k,l,m)}$. 

\begin{defn}
We say that $t_1, \ldots , t_n $ are $\Gamma$-geodesically independent if $t_i$ is nonconstant for $i =1, \ldots , n$ and there are no relations of the form $t_i = \gamma t_j$ for $i \neq j$, $i,j \in \{1, \ldots , n \}$ and $\gamma\in\text{Comm}(\Gamma)$.
\end{defn}

The term geodesically independent comes from Pila's work \cite{PilaAO}; it is related to the earlier notion of a totally geodesic subvariety of a Shimura variety studied by Moonen \cite{Moonen}.

\begin{fct}{\cite[Theorem 2.16]{CasFreNag}}\label{ALW}
Let $\m C(V)$ be an algebraic function field, where $V\subset \m A^{n}$ is an irreducible algebraic variety defined over $\m C$. Let
\[t_1,\ldots,t_n\in \m C(V)\]
take values in the upper half complex plane $\m H$ at some $P\in V$ and are $\Gamma$-geodesically independent. Then the $3n$-functions
\[J_{(k,l,m)}(t_1),J'_{(k,l,m)}(t_1),J''_{(k,l,m)}(t_1)\ldots,J_{(k,l,m)}(t_n),J'_{(k,l,m)}(t_n),J''_{(k,l,m)}(t_n)\]
(considered as functions on $V(\m C)$ locally near $P$) are algebraically independent over $\m C(V)$.
\end{fct}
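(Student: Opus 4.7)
The plan is to argue by contradiction: assume the $3n$ functions $J(t_i), J'(t_i), J''(t_i)$ are algebraically dependent over $\m C(V)$ and extract from this a relation $t_i = \gamma \cdot t_j$ with $\gamma \in \text{Comm}(\Gamma)$, contradicting geodesic independence. First reduce to $\dim V = 1$: a nontrivial polynomial relation over $\m C(V)$ restricts to one over the function field of a generic curve through $P$, and geodesic independence is preserved under such restriction. Set $K := \m C(V)$, pick a local parameter $s$ near $P$, and work with $\partial = d/ds$ inside a universal differentially closed field $\mathcal U \supset K$. Each $t_i$ is nonconstant, so $\partial t_i \neq 0$ in $K$.

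The Schwarzian chain rule shows that $y_i := J_{(k,l,m)}(t_i)$ satisfies with respect to $\partial$ the twisted Schwarzian equation
\[
S_s(y_i) + (\partial y_i)^2\, R_{k,l,m}(y_i) \;=\; S_s(t_i),
\]
whose solution set $\mathcal Y_i$ over $K$ corresponds, through the change of variable $s \leftrightarrow t_i$, to the autonomous Schwarzian set $\mathcal Y$ from Section \ref{section2}, and hence inherits strong minimality (Fact \ref{hyperbolic}) and geometric triviality (Fact \ref{geo}). Algebraic dependence of $\{J(t_i), J'(t_i), J''(t_i)\}_{i=1}^{n}$ over $K$ is equivalent to algebraic dependence of $\{y_i, \partial y_i, \partial^2 y_i\}_{i=1}^{n}$ over $K$, since the conversion factors are polynomials in $\partial t_i, \partial^2 t_i \in K$. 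Assuming such dependence, geometric triviality of each $\mathcal Y_i$ combined with the standard geometric-stability fact that joint dependence across geometrically trivial strongly minimal sets forces pairwise non-orthogonality yields some $i<j$ and a nonzero $P \in K[X,Y]$ with $P(y_i,y_j) = 0$.

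The final step is to show that such a binary relation produces $\gamma \in \text{Comm}(\Gamma)$ with $t_i = \gamma\cdot t_j$. The non-orthogonality of $\mathcal Y_i$ and $\mathcal Y_j$ means that the ${\rm PSL}_2(\m C)$-Picard--Vessiot extensions associated by Proposition \ref{riccati} to the two pullbacks are $K$-isomorphic, and such an isomorphism is realised by a Möbius transformation $\gamma$ with $t_i = \gamma \cdot t_j$. Transporting $P(y_i,y_j)=0$ through the identifications $\mathcal Y_i \cong \mathcal Y \cong \mathcal Y_j$ then produces a nonzero $\widetilde P \in \m C[X,Y]$ with $\widetilde P\bigl(J(t), J(\gamma t)\bigr) = 0$, so $J$ and $J \circ \gamma$ are algebraically dependent as automorphic functions on $\m H$. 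By Poincaré's theorem on automorphic functions (cf.\ the discussion preceding Fact \ref{ArithmeticityTHM}) this forces $\gamma \in \text{Comm}(\Gamma)$, contradicting the assumed $\Gamma$-geodesic independence of the $t_i$'s.

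The main obstacle is this last step: linking non-orthogonality over $K$ to a Möbius transformation in ${\rm PSL}_2(\m C)$ and then to $\text{Comm}(\Gamma)$. Here the fact that strong minimality of the Schwarzian forces the differential Galois group to be all of ${\rm PSL}_2(\m C)$ (the equivalence in Proposition \ref{riccati}) is essential, as it ensures that the associated torsors are genuine ${\rm PSL}_2(\m C)$-torsors, so that non-orthogonality between $\mathcal Y_i$ and $\mathcal Y_j$ must indeed come from a group element rather than from some proper subscheme of the torsor. One then adapts the differential Galois arguments developed for Facts \ref{IndArithmetic} and \ref{IndNonArithmetic} to the general setting of $t_i \in \m C(V)$ rather than group translates of a single parameter.
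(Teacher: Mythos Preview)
This statement is quoted in the paper as a Fact from \cite[Theorem~2.16]{CasFreNag} and is not proved here, so there is no proof in the present paper to compare against directly. Your outline does track the strategy of \cite{CasFreNag}: reduce to a curve, use the Schwarzian chain rule to place each $y_i = J_{(k,l,m)}(t_i)$ in a fiber $\chi(y)=S_s(t_i)$, invoke strong minimality and geometric triviality to cut down to a binary relation, and then show that relation forces $t_i=\gamma t_j$ with $\gamma\in\text{Comm}(\Gamma)$.

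The genuine gap is exactly where you flag it, and your proposed bridge does not hold. You assert that non-orthogonality of $\mathcal Y_i$ and $\mathcal Y_j$ makes the associated Picard--Vessiot extensions $K$-isomorphic and that ``such an isomorphism is realised by a M\"obius transformation $\gamma$ with $t_i=\gamma\cdot t_j$''. Proposition~\ref{riccati} is about the extension $\mathbb C(y)\subset\mathbb C(y)\langle t\rangle$, not about $K$-extensions, and in any case an isomorphism of ${\rm PSL}_2$-torsors acts on the solution space, not on the base parameters $t_i$; nothing in that statement produces an identity $t_i=\gamma t_j$ in $K$. Moreover $\mathcal Y_i$ and $\mathcal Y_j$ are \emph{distinct} fibers of $\chi$ in general (since $S_s(t_i)\neq S_s(t_j)$), so Fact~\ref{groupfiber} does not apply as written, and Fact~\ref{generalschwarzian} only gives $P\in\mathbb C[X_1,X_2]$ for solutions of the \emph{same} equation $(\star)$.

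What \cite{CasFreNag} actually does in its Sections~5--6 is (i) exploit that every solution of a fiber $\chi(y)=a$ is of the form $J_{(k,l,m)}(g\cdot\tau)$ for some $g\in{\rm PSL}_2(\mathbb C)$ and some local inverse $\tau$ of the Schwarzian, thereby reducing the question for arbitrary $t_i\in\mathbb C(V)$ to the case of ${\rm PSL}_2(\mathbb C)$-translates handled in Facts~\ref{IndArithmetic}/\ref{IndNonArithmetic}; and (ii) classify binary $\mathbb C$-polynomial correspondences on the solution set of $(\star)$ as $\Gamma$-special via Lemmas~5.15--5.16 there. Your last paragraph names these ingredients but does not supply them; the sentence ``one then adapts the differential Galois arguments developed for Facts~\ref{IndArithmetic} and \ref{IndNonArithmetic}'' is precisely the content of the theorem, so as written the proposal is a correct plan rather than a proof.
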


\section{The generic Schwarz triangle equation and Belyi Surfaces} \label{GenericTri}
Throughout we assume that $(\mathcal{U},\partial)$ is a saturated differentially closed field of characteristic $0$ and that $\mathbb{C}$ is its field of constants. We work in the language $L_{\partial}=(0,1,+,\cdot,\partial)$ of differential rings. Recall by a generic Schwarzian triangle equations we mean the ODE ($\star$) with 
\[R_{\alpha,\beta,\gamma}(y)=\frac{1}{2}\left(\frac{1-{\beta}^{-2}}{y^2}+\frac{1-{\gamma}^{-2}}{(y-1)^2}+\frac{{\beta}^{-2}+{\gamma}^{-2}-{\alpha}^{-2}-1}{y(y-1)}\right).\]
and such that $\alpha,\beta,\gamma$ are are three complex numbers algebraically independent
over $\mathbb{Q}$ (see example \ref{ExampleTriangle}).

\subsection{Strong Minimality}
We now aim to prove the following result.
\begin{prop}\label{THM1}
The set defined by a generic Schwarz triangle equation ($\star$) is strongly minimal.
\end{prop}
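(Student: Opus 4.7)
The plan is to reduce, via Fact~\ref{thm:strongminimality} together with the equivalence $(1)\!\iff\!(3)$ in Proposition~\ref{riccati}, to showing that the Riccati equation
\[
\frac{du}{dy} + u^2 + \frac{1}{2}R_{\alpha,\beta,\gamma}(y) = 0
\]
admits no algebraic solution over $\mathbb{C}(y)$ when $\alpha,\beta,\gamma$ are algebraically independent over $\mathbb{Q}$; equivalently, the differential Galois group of the linear equation $\psi'' + \frac{1}{2}R_{\alpha,\beta,\gamma}(y)\psi = 0$ is the full group $\mathrm{SL}_2(\mathbb{C})$.

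I would then carry out a density-and-descent argument using the hyperbolic case as input. For each pair of positive integers $(d,N)$, the statement ``the Riccati admits an algebraic solution $u$ whose minimal polynomial over $\mathbb{C}(y)$ has degree $\le d$ in $u$ and whose coefficients have numerator and denominator of degree $\le N$ in $y$'' is expressible as an existential $L_\partial$-formula in the parameters $(\alpha,\beta,\gamma)$. Its negation $\psi_{d,N}(\alpha,\beta,\gamma)$ therefore cuts out, by Fact~\ref{stableembed}, a $\mathbb{Q}$-constructible subset $W_{d,N} \subseteq \mathbb{C}^3$ in the Zariski topology. Every integer triple $(k,l,m)$ with $\frac{1}{k}+\frac{1}{l}+\frac{1}{m}<1$ lies in $W_{d,N}$, because by Fact~\ref{hyperbolic} the specialized Schwarzian set is strongly minimal, whence by Proposition~\ref{riccati} the specialized Riccati admits no algebraic solution at all. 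A direct polynomial check (any $P\in\mathbb{Q}[\alpha,\beta,\gamma]$ vanishing on all such triples must be identically zero) shows that these hyperbolic triples are Zariski-dense in $\mathbb{A}^3$. Consequently each $W_{d,N}$ is a $\mathbb{Q}$-constructible Zariski-dense subset of $\mathbb{A}^3$, so contains a non-empty $\mathbb{Q}$-Zariski-open subset, hence the generic point of $\mathbb{A}^3$ over $\mathbb{Q}$, and therefore every triple $(\alpha,\beta,\gamma)$ with coordinates algebraically independent over $\mathbb{Q}$. Intersecting over all $(d,N)$ produces the desired non-existence of any algebraic Riccati solution, and Fact~\ref{thm:strongminimality} concludes.

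The most delicate point is the first-order expressibility of ``algebraic Riccati solution of bounded bi-degree'' uniformly in the parameters, so that Fact~\ref{stableembed} applies. An appealing alternative which bypasses this bookkeeping is a semi-continuity argument for differential Galois groups in algebraic families: the Galois group at a specialized parameter is a subgroup, up to conjugacy, of the generic one, so if the generic Galois group were a proper algebraic subgroup of $\mathrm{SL}_2(\mathbb{C})$ then every specialization would remain proper, contradicting Fact~\ref{hyperbolic} at any hyperbolic integer triple such as $(2,3,7)$. Either route delivers the conclusion through Proposition~\ref{riccati} and Fact~\ref{thm:strongminimality}.
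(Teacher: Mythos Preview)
Your argument is correct, and it follows the same general strategy as the paper---specialization to hyperbolic integer triples $(k,l,m)$ where the result is known (Fact~\ref{hyperbolic}), combined with a constructibility/descent principle---but the implementation differs. The paper works directly with the failure of strong minimality: it packages the statement ``there is a differential subvariety of order $1$ or $2$'' into a single $L_\partial$-formula $\theta(\alpha,\beta,\gamma)$, then invokes Fact~\ref{descent} to specialize $(\alpha,\beta,\gamma)$ to a hyperbolic integer triple, contradicting Fact~\ref{hyperbolic} outright. Your route instead passes through the Riccati criterion (Fact~\ref{thm:strongminimality}) and carries out the descent on the condition ``no algebraic Riccati solution of bounded bi-degree,'' which has the pleasant feature that it is a purely algebraic statement in the coefficients---expressible already in the language of rings, so Chevalley/ACF constructibility suffices and the $L_\partial$ machinery is not strictly needed. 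Your Galois semi-continuity alternative is also valid and bypasses the formula bookkeeping altogether. One small point to make explicit: to place the hyperbolic triples inside each $W_{d,N}$ you need ``strongly minimal at $(k,l,m)$ $\Rightarrow$ no algebraic Riccati solution at $(k,l,m)$.'' This is true, but Proposition~\ref{riccati} gives $(1)\Leftrightarrow(3)$, not ``strong minimality $\Leftrightarrow(3)$''; you should note that strong minimality over any $K\supset\mathbb{C}$ applied with $K=\mathbb{C}(t)$ (for $t$ with $\partial t=1$) yields condition~(1), or simply observe that the proof of Fact~\ref{hyperbolic} in \cite{CasFreNag} proceeds by first establishing the Riccati hypothesis. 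The paper in fact remarks after the proof that its direct argument yields the Riccati conclusion as a corollary, whereas you take the Riccati route as the \emph{input}; both land in the same place.
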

\begin{proof}[Proof of Proposition \ref{THM1}] Assume that $\alpha,\beta,\gamma$ are algebraically independent over $\mathbb{Q}$. We denote by $C$ the field of constants generated by $\alpha,\beta,\gamma$ over $\mathbb{Q}$, that is $C=\mathbb{Q}(\alpha,\beta,\gamma)$. Let us denote by $X(\alpha,\beta,\gamma)$ the set defined by ODE $(\star)$. We write $\phi(y,u_1,u_2,u_3)$ for the $L_{\partial}$-formula such that $X(\alpha,\beta,\gamma)=\{y\in\mathcal{U}\;:\;\models\phi(y,\alpha,\beta,\gamma)\}$.

Of course $\phi(y,u_1,u_2,u_3)$ is obtained from the equation $(\star)$ with the added condition that $y'\neq0$, which is required once we clear the denominators. More precisely since the numerator and denominator of $R_{\alpha,\beta,\gamma}(y)$ are $r(y,\alpha,\beta,\gamma)=(1-{\beta}^{-2})(y-1)^2+(1-{\gamma}^{-2})y^2+({\beta}^{-2}+{\gamma}^{-2}-{\alpha}^{-2}-1)y(y-1)$ and $s(y)=2y^2(y-1)^2$ respectively, if we let
$$\varphi(y,u_1,u_2,u_3)=\;\left(s(y)y'y'''-\frac{3}{2}s(y)(y'')^2+r(y,u_1,u_2,u_3)(y'^4)=0\right)\wedge (y'\neq 0),$$
then $\phi(y,u_1,u_2,u_3):= \varphi(y,u_1,u_2,u_3)\wedge (u_1'=0)\wedge (u_2'=0)\wedge (u_3'=0)$. 

For contradiction, assume that $X(\alpha,\beta,\gamma)$ is not strongly minimal. Then by definition, there exists a differential field extension $K$ of $C$ and $z\in X(\alpha,\beta,\gamma)$, such that $tr.deg._KK\gen{z}=1\text{ or }2$. We can assume that\footnote{For example, $\overline{b}$ can be taken to be the coefficients appearing in the polynomial witnessing $tr.deg._KK\gen{z}=1\text{ or }2$.} $K=C\gen{\overline{b}}$ for some $\overline{b}\in\mathcal{U}^m$ and $m\in\mathbb{N}$. So it follows that there exists a differential polynomial $F\in K\{y\}$ of order $1$ or $2$, such that $F(z)=0$. We will use the fact that we have a true first-order $L_{\partial}$-sentence ($\theta(\alpha,\beta,\gamma)$ below) asserting that: there exists a finite tuple of parameters $\overline{b}$ from $\mathcal{U}$, such $X(\alpha,\beta,\gamma)$ ($\alpha,\beta,\gamma$ constants) has a subvariety given by the vanishing of a non-trivial differential polynomial of order 1 or 2 and with coefficients in $\mathbb{Q}\gen{\alpha,\beta,\gamma,\overline{b}}$.

We write $F(y)$ as $F(y,\alpha,\beta,\gamma,\overline{b})$ to emphasize that those parameters appear in $F(y)$ and let $P_i(\alpha,\beta,\gamma,\overline{b})$ (for $i=1,\ldots,r$ and some $r\in\mathbb{N}$) be the non-zero coefficients of $F(y)$ as a polynomial in $y,y',y''$. Here each $P_i\in K=C\gen{\overline{b}}$. Let us also write $\rho(u_1,u_2,u_3,\overline{v})$ for the $L_{\partial}$-formula such that $\rho(\alpha,\beta,\gamma,\overline{b})$ is the true $L_{\partial}$-sentence
\[\forall y\left(F(y,\alpha,\beta,\gamma,\overline{b})=0\rightarrow \phi(y,\alpha,\beta,\gamma)\right). \]
In other words, $\rho(u_1,u_2,u_3,\overline{v}):=\forall y\left(F(y,u_1,u_2,u_3,\overline{v})=0\rightarrow \phi(y,u_1,u_2,u_3)\right)$
and  $\mathcal{U}\models \rho(\alpha,\beta,\gamma,\overline{b})$.

\noindent{\bf Claim:} We have a $L_{\partial}$-formula $\theta(u_1,u_2,u_3)$ such that {\bf\em if} $\mathcal{U}\models\theta(\alpha_0,\beta_0,\gamma_0)$, then $\alpha_0,\beta_0,\gamma_0$ are constants and there exists an order 1 or 2 definable subset of $X(\alpha_0,\beta_0,\gamma_0)$ defined over $\mathbb{Q}\gen{\alpha_0,\beta_0,\gamma_0,\overline{c}}$ for some $\overline{c}\in\mathcal{U}^m$.
\begin{proof}[Proof of Claim]
The formula $\theta(u_1,u_2,u_3)$ is simply chosen so that $\theta(\alpha,\beta,\gamma)$ is the above-mentioned true $L_{\partial}$-sentence, namely
\[\exists \overline{v}\left(\rho(\alpha,\beta,\gamma,\overline{v})\wedge \bigwedge^r_{i=1}P_i(\alpha,\beta,\gamma,\overline{v})\neq0\right).\]
If $\mathcal{U}\models\theta(\alpha_0,\beta_0,\gamma_0)$, then the formula $(u_1'=0)\wedge (u_2'=0)\wedge(u_3'=0)$ gives that $\alpha_0,\beta_0,\gamma_0$ are constants and the second statement of the claim follows by construction.\end{proof}
So we have $\theta(\alpha,\beta,\gamma)$ is true in $\mathcal{U}$ and so we can apply Fact \ref{descent} with $V = \m A^3$ and $F=\mathbb{Q}$. We obtained $k,l,m\in\mathbb{N}$ such that $2\leq k\leq l\leq m$ and $\mathcal{U}\models\theta(k,l,m)$. By making our initial choice of $k$ large enough (say $k>6$) we can also ensure that $\frac{1}{k}+\frac{1}{l}+\frac{1}{m}<1$. But now by the above claim, there exists an order 1 or 2 definable subset of $X(k,l,m)$. This contradicts Fact \ref{hyperbolic}, namely that $X(k,l,m)$ is strongly minimal.
\end{proof}

\begin{rem}
By virtue of Fact \ref{thm:strongminimality} and Proposition \ref{riccati}, it follows that we have obtained a different proof of the fact that if $\alpha,\beta,\gamma \in\m C$ are algebraically independent over $\m Q$, then the Riccati equation
$$\frac{du}{dy} + u^2 + \frac{1}{2}R_{\alpha,\beta,\gamma}(y) = 0$$
has no solutions in $\m C(t)^{alg}$.
\end{rem}
In the previous proof, note that the only point at which we used the fact that $(\alpha,\beta,\gamma )$ are independent transcendental numbers was in the final paragraph while applying Fact \ref{descent}. Therefore, it is not hard to see that the proof works identically in the following slightly more general case: 

\begin{cor} \label{genericpt}
The set defined by ($\star$) with $(\alpha, \beta , \gamma)$ a generic point on an irreducible algebraic variety $V \subset \m A^3$ over $\m Q$ with a dense set of points in $\m (N _{>1})^3$ is strongly minimal. 
\end{cor}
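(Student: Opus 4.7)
The plan is to follow the proof of Proposition \ref{THM1} essentially verbatim, changing only the final descent step so as to accommodate the variety $V$ in place of $\m A^3$. Suppose for contradiction that the set $X(\alpha,\beta,\gamma)$ defined by ($\star$) is not strongly minimal, where $(\alpha,\beta,\gamma)$ is a generic point of $V$ over $\mathbb{Q}$. I would form the same $L_\partial$-formula $\theta(u_1,u_2,u_3)$ as before, expressing that $u_1,u_2,u_3$ are constants and that there exist parameters $\bar v$ for which $X(u_1,u_2,u_3)$ has a nontrivial proper subvariety cut out by a differential polynomial of order 1 or 2 in $\mathbb{Q}\langle u_1,u_2,u_3,\bar v\rangle\{y\}$. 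The failure of strong minimality, together with the same quantifier-elimination packaging used in the proof of Proposition \ref{THM1}, yields $\mathcal{U}\models\theta(\alpha,\beta,\gamma)$.

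Next I would apply Fact \ref{descent} with $F=\mathbb{Q}$ and the irreducible variety $V$. Since $\theta$ holds at a generic point of $V$ over $\mathbb{Q}$, it holds on some $\mathbb{Q}$-constructible Zariski dense subset $U \subseteq V$. Because $V$ is irreducible, any Zariski dense constructible subset of $V$ contains a nonempty Zariski open subset of $V$; hence $U \supseteq V\setminus V'$ for some proper closed subvariety $V'\subsetneq V$.

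The hypothesis that $V$ has a Zariski dense set of points in $(\mathbb{N}_{>1})^3$ now guarantees that $(V\setminus V')\cap (\mathbb{N}_{>1})^3$ is itself Zariski dense in $V$, since otherwise the integer locus of $V$ would lie in the proper closed set $V'$, contradicting the density assumption. In particular $U$ contains infinitely many integer triples $(k,l,m)$ with $k,l,m\geq 2$. Only finitely many such triples satisfy the spherical condition $\tfrac{1}{k}+\tfrac{1}{l}+\tfrac{1}{m}\geq 1$, so I may choose $(k,l,m)\in U\cap (\mathbb{N}_{>1})^3$ with $\tfrac{1}{k}+\tfrac{1}{l}+\tfrac{1}{m}<1$, i.e.\ a hyperbolic triple. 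Then $\mathcal{U}\models\theta(k,l,m)$ produces an order 1 or 2 definable proper subvariety of $X(k,l,m)$, contradicting the strong minimality of $X(k,l,m)$ given by Fact \ref{hyperbolic}.

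The only step that requires any genuine care is the density argument, and this is exactly where the hypothesis on $V$ is needed; I do not anticipate any serious obstacle beyond the two elementary observations that a Zariski dense constructible subset of an irreducible variety contains a nonempty Zariski open subset, and that the dense integer locus of $V$ must meet such an open subset in a Zariski dense (hence infinite) set, all but finitely many members of which are hyperbolic.
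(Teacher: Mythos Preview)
Your approach is exactly what the paper has in mind: the paper's ``proof'' is just the remark that the argument for Proposition~\ref{THM1} goes through with Fact~\ref{descent} applied to $V$ instead of $\m A^3$, and you have spelled this out carefully.

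There is one slip. The claim ``Only finitely many such triples satisfy the spherical condition $\tfrac{1}{k}+\tfrac{1}{l}+\tfrac{1}{m}\geq 1$'' is false as stated: the triples $(2,2,n)$ and their permutations are non-hyperbolic for every $n\geq 2$. The repair is easy --- the non-hyperbolic triples in $(\m N_{>1})^3$ lie in the union of the three lines $\{x=y=2\}$, $\{x=z=2\}$, $\{y=z=2\}$ together with finitely many sporadic points, so absorb these lines into your proper closed set $V'$ before selecting the integer point. This works whenever $V$ is not itself one of those three lines; if $V$ \emph{is} one of them (say $\{x=y=2\}$, so the generic point is $(2,2,\gamma)$), then by row~1 of the table in Proposition~\ref{pr:kimura} the equation is in fact not strongly minimal, so the corollary itself requires that caveat.
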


By means of differential Galois theory we can also specify the kind of $\mathbb Q$-algebraic dependence relations that may appear between
en the parameters $(\alpha,\beta,\gamma)$ for non-generic and non-strongly minimal triangle equations \eqref{(*)}.

\begin{prop}\label{pr:kimura}
Let us assume that equation \eqref{(*)} with $R = R_{\alpha,\beta,\gamma}$
with complex parameters $(\alpha,\beta,\gamma)$ is not strongly minimal. One of the following holds:
\begin{enumerate}
    \item At least one of the four complex numbers,
    $\alpha^{-1} + \beta^{-1} + \gamma^{-1}$, 
    $-\alpha^{-1} + \beta^{-1} + \gamma^{-1}$,
    $\alpha^{-1} - \beta^{-1} + \gamma^{-1}$,
    $\alpha^{-1} + \beta^{-1} - \gamma^{-1}$ is an odd integer.
    \item The quantities $\alpha^{-1}$ or $-\alpha^{-1}$, $\beta^{-1}$ or $-\beta^{-1}$ and $\gamma^{-1}$ or $-\gamma^{-1}$ take, in an arbitrary order, values given in the following table:

\begin{center}
\renewcommand{\arraystretch}{1.2}
\begin{tabular}{|c|c|c|c|c|}
\hline 
  & $\pm \alpha^{-1}$ & $\pm\beta^{-1}$ & $\pm \gamma^{-1}$ & \\ 
\hline 
1  & $\frac{1}{2}+\ell$ & $\frac{1}{2}+m$ & arbitrary & \\ 
\hline 
2 & $\frac{1}{2} + \ell$ & $\frac{1}{2}+m$ & $\frac{1}{2}+ n$ & \\  
\hline
3 & $\frac{2}{3}+\ell$ & $\frac{1}{3}+m$ & $\frac{1}{4}+n$ & $\ell+m+n$ even \\ 
\hline
4 &  $\frac{1}{2}+\ell$ & $\frac{1}{3}+m$ & $\frac{1}{4}+n$ &  \\ 
\hline
5 &  $\frac{2}{3}+\ell$ & $\frac{1}{4}+m$ & $\frac{1}{4}+n$ & $\ell+m+n$ even  \\
\hline
6 &  $\frac{1}{2}+\ell$ & $\frac{1}{3}+m$ & $\frac{1}{5}+n$ &   \\ 
\hline
7 &  $\frac{2}{5}+\ell$ & $\frac{1}{3}+m$ & $\frac{1}{3}+n$ & $\ell+m+n$ even  \\ 
\hline
8 &  $\frac{2}{3}+\ell$ & $\frac{1}{5}+m$ & $\frac{1}{5}+n$ & $\ell+m+n$ even  \\ 
\hline
9 &  $\frac{1}{2}+\ell$ & $\frac{2}{5}+m$ & $\frac{1}{5}+n$ & $\ell+m+n$ even  \\ 
\hline
10 &  $\frac{3}{5}+\ell$ & $\frac{1}{3}+m$ & $\frac{1}{5}+n$ & $\ell+m+n$ even  \\ 
\hline
11 &  $\frac{2}{5}+\ell$ & $\frac{2}{5}+m$ & $\frac{2}{5}+n$ & $\ell+m+n$ even  \\ 
\hline
12 &  $\frac{2}{3}+\ell$ & $\frac{1}{3}+m$ & $\frac{1}{5}+n$ & $\ell+m+n$ even  \\ 
\hline
13 &  $\frac{4}{5}+\ell$ & $\frac{1}{5}+m$ & $\frac{1}{5}+n$ & $\ell+m+n$ even  \\ 
\hline
14 &  $\frac{1}{2}+\ell$ & $\frac{2}{5}+m$ & $\frac{1}{3}+n$ & $\ell+m+n$ even  \\ 
\hline
15 &  $\frac{3}{5}+\ell$ & $\frac{2}{5}+m$ & $\frac{1}{3}+n$ & $\ell+m+n$ even  \\ 
\hline
\end{tabular}
\end{center}
where $\ell$, $m$, $n$ stand for arbitrary integer numbers.
\end{enumerate}
\end{prop}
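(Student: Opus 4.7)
The plan is to reduce the statement to Kimura's classical classification of Riemann/hypergeometric equations whose differential Galois group is a proper algebraic subgroup of $\mathrm{SL}_2(\mathbb{C})$.

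First I would perform the model-theoretic reduction. Suppose equation \eqref{(*)} with $R = R_{\alpha,\beta,\gamma}$ fails to be strongly minimal. The contrapositive of Fact \ref{thm:strongminimality} (whose hypothesis is precisely that the associated Riccati equation has no algebraic solution) yields a solution of the Riccati equation \eqref{eq:riccati} in $\mathbb{C}(y)^{\mathrm{alg}}$. By Proposition \ref{riccati}, this is equivalent to the differential Galois group of the linearization \eqref{eq:linear}, viewed in $\mathrm{SL}_2(\mathbb{C})$, being a proper algebraic subgroup. So the whole task reduces to classifying those $(\alpha,\beta,\gamma)$ for which this group fails to fill out $\mathrm{SL}_2(\mathbb{C})$.

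Second I would identify \eqref{eq:linear} as the projective normal form of a Riemann (hypergeometric) equation. The rational function $\tfrac12 R_{\alpha,\beta,\gamma}(y)$ has double poles at $y = 0, 1, \infty$ with leading coefficients $\tfrac14(1-\beta^{-2})$, $\tfrac14(1-\gamma^{-2})$ and $\tfrac14(1-\alpha^{-2})$ respectively, so the indicial equation at $y=0$ is $\lambda^2-\lambda+\tfrac14(1-\beta^{-2})=0$ with roots $\tfrac{1\pm\beta^{-1}}{2}$; the exponent difference at $y=0$ is therefore $\beta^{-1}$. The analogous computations at $y=1$ and $y=\infty$ give exponent differences $\gamma^{-1}$ and $\alpha^{-1}$. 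Since $R_{\alpha,\beta,\gamma}$ depends only on $\alpha^{-2}, \beta^{-2}, \gamma^{-2}$, the equation is invariant under any of the flips $\alpha\mapsto-\alpha$, $\beta\mapsto-\beta$, $\gamma\mapsto-\gamma$, which accounts for the $\pm$ ambiguity in the displayed table.

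Third I would invoke Kimura's theorem, which lists exactly the triples of exponent differences for which the Galois group of a Riemann equation is a proper subgroup of $\mathrm{SL}_2(\mathbb{C})$. Kimura's classification splits into two disjoint cases: (i) the \emph{reducible monodromy} case, characterized by the requirement that one of the four quantities $\alpha^{-1}+\beta^{-1}+\gamma^{-1}$, $-\alpha^{-1}+\beta^{-1}+\gamma^{-1}$, $\alpha^{-1}-\beta^{-1}+\gamma^{-1}$, $\alpha^{-1}+\beta^{-1}-\gamma^{-1}$ be an odd integer, which is precisely item (1); and (ii) the \emph{finite primitive monodromy} case, comprising the dihedral family together with the Schwarz list for the tetrahedral, octahedral, and icosahedral subgroups of $\mathrm{PSL}_2(\mathbb{C})$, producing the 15-row table of item (2).

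The mathematical content is carried by Kimura's theorem; the work here is the model-theoretic reduction above and the indicial-equation computation matching our parameters $(\alpha,\beta,\gamma)$ with Kimura's. I expect the main obstacle to be neither conceptual nor analytic but purely bookkeeping: carefully tracking the sign symmetries $\alpha\mapsto-\alpha$ etc.\ and the integer shifts inherent in the exponent-difference data, so that the stated list is reproduced row by row and no additional redundant case appears.
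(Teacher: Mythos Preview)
Your proposal is correct and follows essentially the same route as the paper: reduce failure of strong minimality to the Riccati equation \eqref{eq:riccati} having an algebraic solution via Fact~\ref{thm:strongminimality}, identify the linearization \eqref{eq:linear} as (projectively) a hypergeometric equation with exponent differences $\beta^{-1},\gamma^{-1},\alpha^{-1}$ at $0,1,\infty$, and read off Kimura's list. One small terminological slip: row~1 of the table corresponds to the (possibly infinite) dihedral group, which is imprimitive, so labeling all of item~(2) as the ``finite primitive monodromy'' case is inaccurate, though this does not affect the logic of the argument.
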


\begin{proof}

First, let us recall the criterion of strong minimality given in Fact \ref{thm:strongminimality}.
If Riccati equation \eqref{eq:riccati} has no algebraic solutions, then equation \eqref{(*)} is strongly minimal. On the other hand, Riccati equation \eqref{eq:riccati} has an algebraic solution if and only if linear equation \eqref{eq:linear} is Liouville integrable (see \cite[\S 1.3]{Kovacic}).  Linear equation \eqref{eq:linear} is a Fuchsian equation with singular regular singularities at $0$, $1$ and $\infty$. Let us recall that two second order linear differential equations are projectively equivalent if the quotients between its pairs of linearly independent solutions satisfy the same Schwarzian equation (see \cite[Prop. VIII.3.2 p. 211]{SaintG}). The logarithmic derivative of solutions of projectively equivalent equations satisfy the same Riccati equation. It follows inmediately that a second order differential quation is Liouville integrable if and only if any projectively equivalent equation is so. Finally, any Fuchsian equation with singularities at $0$, $1$, $\infty$ projectively equivalent to an hypergeometric equation, 
\begin{equation}\label{eq:HG}
t(1-t)y'' + (c-(a+b+1)t)y' - aby = 0
\end{equation}
with the same exponent differences \cite[IX.2 p. 240]{SaintG}.
A direct computation of the exponent differences for \eqref{eq:linear} gives $\beta^{-1}$, $\gamma^{-1}$ and $\alpha^{-1}$ at $0$, $1$ and $\infty$ respectively. 
The exponent differences of the hypergeometric equation \eqref{eq:HG} are $1-c$, $c-a-b$ and $a-b$ at $0$, $1$ and $\infty$. Thus we obtain equations,
$$1-c = \beta^{-1}, \quad c-a-b = \gamma^{-1}, \quad a-b = \alpha^{-1}$$
(cf. \cite{Yoshida} page 68) and therefore we have, 
$$a = \frac{1}{2}(1+\alpha^{-1}-\beta^{-1}-\gamma^{-1}), \quad b = \frac{1}{2}(1-\alpha^{-1}-\beta^{-1}-\gamma^{-1}),\quad c = 1 -\beta^{-1}.$$
the parameters of an hypergeometric equation which is projectively equivalent to \eqref{eq:linear}. Finally, Liouville integrable hypergeometric equations \eqref{eq:HG} are completely classified by their exponent differences which, in this case, are $\alpha^{-1}$, $\beta^{-1}$, $\gamma^{-1}$. By application of Theorem I in \cite{Kimura} we obtain the desired result.
\end{proof}

\begin{rem}
Possibilities (1) and (2) in the statement of Proposition \ref{pr:kimura} correspond to different reductions of the Galois group. Case (1) is satisfied whenever the Galois group is triangularizable. Case (2) line 1, corresponds to a Galois group contained in the infinite dihedral group. Cases (2) lines 2 to 15, correspond to different realizations of symmetry groups of platonic solids. Hence, they are not completely exclusive one of the other: for instance, a diagonal group is triangularizable and also contained in the infinite dihedral group.
\end{rem}

Now that we have proved strong minimality in various cases, we aim to understand the existence of possible algebraic relations between solutions of the given equation. We are only able to do so in the generic case and leave other cases for future work. By Proposition \ref{THM1}, we have that the conclusion of Theorem \ref{generalschwarzian} holds for a generic Schwarz triangle equation ($\star$), and our next step will be to prove that no such polynomial as in Theorem \ref{generalschwarzian} exists. That is, there are no algebraic relations between solutions of a generic Schwarzian equation. Our argument exploits the fact that arithmetic triangle groups are rare.
\begin{fct}{\cite{Take}}
Up to ${\rm PSL}_2(\mathbb{R})$-conjugation, there are finitely many arithmetic triangle groups; 76 cocompact and 9 non-cocompact. A complete (finite) list of triples $(k,l,m)$ with $2\leq k\leq l\leq m\leq\infty$ such that $\Gamma_{(k,l,m)}$ is arithmetic can be found in \cite[Theorem 3]{Take}.
\end{fct}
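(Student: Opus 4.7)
The strategy is to combine Takeuchi's own (slightly earlier) arithmeticity criterion with an explicit trace computation for triangle groups, and then to show that the arithmetic conditions together with the hyperbolicity inequality $\frac{1}{k}+\frac{1}{l}+\frac{1}{m}<1$ force $(k,l,m)$ to lie in a finite, computable list.

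First, I would invoke the arithmeticity criterion: a Fuchsian group $\Gamma$ of the first kind with finite covolume is arithmetic if and only if its invariant trace field $k\Gamma := \mathbb{Q}\bigl(\operatorname{tr}(\gamma) : \gamma \in \Gamma^{(2)}\bigr)$ is a totally real number field, every trace of an element of $\Gamma^{(2)}$ is an algebraic integer, and for every embedding $\sigma : k\Gamma \hookrightarrow \mathbb{R}$ different from the identity, the set $\sigma(\operatorname{tr}\Gamma^{(2)})$ is bounded. Applied to $\Gamma_{(k,l,m)}$ with generators $g_1,g_2,g_3$ of orders $k,l,m$, the traces of the generators (up to sign) are $2\cos(\pi/k), 2\cos(\pi/l), 2\cos(\pi/m)$, and iterated use of the identity $\operatorname{tr}(AB)+\operatorname{tr}(AB^{-1})=\operatorname{tr}(A)\operatorname{tr}(B)$ together with the relation $g_1g_2g_3 = 1$ shows that $k\Gamma_{(k,l,m)}$ is generated over $\mathbb{Q}$ by an explicit polynomial expression in $\cos(2\pi/k)$, $\cos(2\pi/l)$, $\cos(2\pi/m)$ and $4\cos(\pi/k)\cos(\pi/l)\cos(\pi/m)$.

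The heart of the argument is then to exploit the third criterion: each non-identity Galois conjugate $\sigma$ forces bounds on $\sigma(\cos(\pi/k)), \sigma(\cos(\pi/l)), \sigma(\cos(\pi/m))$ which, via the conjugate of the trace identity for $g_1g_2g_3=1$, essentially demand a reversed ``spherical'' inequality $\frac{1}{k'}+\frac{1}{l'}+\frac{1}{m'}>1$ for the Galois conjugate triple. Since $[\mathbb{Q}(\cos(2\pi/N)):\mathbb{Q}] = \varphi(N)/2$ tends to infinity with $N$, only finitely many $(k,l,m)$ can admit such Galois conjugates at all. One then enumerates the surviving candidate triples, matching each one to a specific quaternion algebra $A/F$ with the correct ramification data to verify arithmeticity, and separates the cocompact case (all of $k,l,m$ finite) from the non-cocompact one (at least one equal to $\infty$).

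The main obstacle is this final matching step: for each candidate $(k,l,m)$ one must exhibit a totally real number field $F$ and a quaternion algebra $A/F$ ramified at exactly one infinite place with an order whose norm-one group is commensurable with $\Gamma_{(k,l,m)}$, and conversely rule out near-miss candidates. This combinatorial-arithmetic content is precisely what fills Takeuchi's table, and it is what produces the exact counts $76$ and $9$ rather than merely finiteness.
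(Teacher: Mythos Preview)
The paper does not supply a proof of this statement at all: it is recorded as a \emph{Fact} with a citation to \cite{Take} and is used as a black box (only the finiteness of the list is needed, so that ``most'' triples $(k,l,m)$ give non-arithmetic triangle groups). There is therefore nothing in the paper to compare your proposal against.

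That said, your outline is a faithful sketch of Takeuchi's own argument: the arithmeticity criterion via the invariant trace field, the explicit description of $k\Gamma_{(k,l,m)}$ in terms of $\cos(\pi/k),\cos(\pi/l),\cos(\pi/m)$, the boundedness condition on non-identity Galois conjugates forcing a finite search, and the case-by-case verification producing the table. If anything, your description of the ``reversed spherical inequality'' for conjugate triples is a bit heuristic---what Takeuchi actually bounds are the conjugate traces directly, and the finiteness comes from degree growth of $\mathbb{Q}(\cos(2\pi/N))$ combined with those bounds---but the shape of the argument is right. For the purposes of the present paper, however, a one-line citation is all that is required.
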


So most triangle groups are non-arithmetic. We will use this to our advantage. We also need a finer analysis of the non-arithmetic groups, especially those groups which are equal to their commensurators (and thus have no associated special polynomials). 
\begin{defn}
A Fuchsian group $\Gamma$ is {\em maximal} if $\Gamma$ cannot be properly embedded, with finite index, in any Fuchsian group.
\end{defn}

\begin{fct}\label{trianglefct} Let $\Gamma=\Gamma_{(k,l,m)}$ be a triangle group. The following holds:
\begin{enumerate}
    \item Any Fuchsian group containing $\Gamma$ is itself a triangle group. 
    \item If $\Gamma$ is non-arithimetic, then $\text{Comm}(\Gamma)$ is a triangle group. Indeed, it is the unique maximal triangle group containing $\Gamma$.
\end{enumerate}
\end{fct}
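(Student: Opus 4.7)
The plan is to dispatch (1) by comparing covolumes and analyzing the induced orbifold cover, and then to derive (2) from (1) using the characterization of arithmeticity already recorded in Fact \ref{ArithmeticityTHM}.

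For part (1), let $\Gamma'$ be a Fuchsian group with $\Gamma \subset \Gamma'$. Since $\Gamma_{(k,l,m)}$ has finite hyperbolic covolume $2\pi(1 - k^{-1} - l^{-1} - m^{-1})$, the identity ${\rm covol}(\Gamma) = [\Gamma':\Gamma]\cdot {\rm covol}(\Gamma')$ forces $d := [\Gamma':\Gamma]$ to be finite and $\Gamma'$ itself to be of finite covolume, hence cofinite Fuchsian. The induced map $\Gamma\backslash\mathbb H \to \Gamma'\backslash\mathbb H$ is then a degree-$d$ branched cover of finite-area hyperbolic orbifolds. Writing the signature of $\Gamma'$ as $(g'; m_1',\ldots,m_r')$, the orbifold covering property forces $g' = 0$ and $r \leq 3$, because every cone/cusp point of $\Gamma'\backslash\mathbb H$ is covered by a cone/cusp point of $\Gamma\backslash\mathbb H$, of which there are only three. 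The orbifold Riemann--Hurwitz identity $\chi^{\rm orb}(\Gamma\backslash\mathbb H) = d\cdot \chi^{\rm orb}(\Gamma'\backslash\mathbb H)$ together with the local ramification/divisibility constraints then pins down the remaining possibilities, and in particular $\Gamma'$ has a triangle signature. Alternatively one can quote Singerman's classification of finite-index Fuchsian subgroup inclusions, which directly asserts that any cofinite Fuchsian group containing a triangle group with finite index is itself a triangle group.

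For part (2), assume $\Gamma$ is non-arithmetic. By Fact \ref{ArithmeticityTHM}, $\Gamma$ has finite index in $\text{Comm}(\Gamma)$. Combined with the discreteness of $\Gamma$ in ${\rm PSL}_2(\mathbb R)$, this forces $\text{Comm}(\Gamma)$ to be discrete too, since it is a finite union of translates of the discrete set $\Gamma$; hence $\text{Comm}(\Gamma)$ is a Fuchsian group containing $\Gamma$ with finite index. Part (1) then gives that $\text{Comm}(\Gamma)$ is a triangle group.

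For the uniqueness assertion, I would show that every Fuchsian group $\Gamma''$ with $\Gamma \subset \Gamma''$ and $[\Gamma'':\Gamma] < \infty$ sits inside $\text{Comm}(\Gamma)$: for any $g \in \Gamma''$ the conjugate $g\Gamma g^{-1}$ has the same finite index in $\Gamma''$ as $\Gamma$ does, so $\Gamma \cap g\Gamma g^{-1}$ has finite index in both $\Gamma$ and $g\Gamma g^{-1}$, putting $g \in \text{Comm}(\Gamma)$. In particular every triangle group containing $\Gamma$ is contained in $\text{Comm}(\Gamma)$, which by the previous step is itself such a triangle group, and is therefore the unique maximal one. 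The main obstacle is the classification step in part (1): ruling out signatures with $g' \geq 1$ or with four or more cone points requires a careful case analysis of orbifold Riemann--Hurwitz, and I expect the cleanest write-up will just cite Singerman's theorem on maximal Fuchsian signatures.
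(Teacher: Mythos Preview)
Your approach to part (2) is essentially identical to the paper's: invoke Fact \ref{ArithmeticityTHM} to get finite index, deduce that $\text{Comm}(\Gamma)$ is discrete (hence Fuchsian), apply (1), and then show any maximal Fuchsian overgroup sits inside $\text{Comm}(\Gamma)$ via exactly the commensurability computation you wrote down. The paper handles (1) purely by citation of Greenberg and Singerman, which is precisely your stated fallback.

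The direct sketch you give for (1), however, contains a real error. The assertion ``every cone/cusp point of $\Gamma'\backslash\mathbb H$ is covered by a cone/cusp point of $\Gamma\backslash\mathbb H$'' is false in general: in an orbifold cover, a cone point of order $n$ downstairs can have \emph{only smooth} preimages, each with local ramification index $n$. A concrete instance is the index-$2$ inclusion $\Gamma_{(k,k,l)} \subset \Gamma_{(2,k,2l)}$ (one of Singerman's non-maximal cases), where the order-$2$ cone point of the larger quotient is covered by a single smooth point upstairs. So the bound $r' \le 3$ does not follow from cone-point counting, and orbifold Riemann--Hurwitz alone will not give it without substantial further input. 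Greenberg's actual argument goes through Teichm\"uller rigidity: a triangle group has zero-dimensional deformation space, and any finite-index overgroup must be at least as rigid, forcing $6g' - 6 + 2r' = 0$ and hence $g' = 0$, $r' = 3$. Your instinct that ``the cleanest write-up will just cite'' the literature is correct, and is exactly what the paper does.
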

\begin{proof}
The first assertion follows from Theorem \cite[Theorem 1]{Green} (see also \cite[Proposition 1 and Section 6]{Sing}. For (2), if $\Gamma$ is non-arthimetic we have, by Fact \ref{ArithmeticityTHM}, that $\Gamma$ has finite index in $\text{Comm}(\Gamma)$. From this we get that $\text{Comm}(\Gamma)$ is a Fuchsian group. Now from (1) it follows that $\text{Comm}(\Gamma)$ is a triangle group. Finally, if $\Gamma_1$ is a maximal Fuchsian group containing $\Gamma$, then it is not hard to see using $[\Gamma_1:\Gamma]<\infty$, that $g\Gamma g^{-1}\sim\Gamma$ for any $g\in\Gamma_1$. So $\Gamma_1\subseteq\text{Comm}(\Gamma)$ and the statement follows.
\end{proof}

So from Fact \ref{trianglefct} a non-arithmetic triangle group $\Gamma$ is  maximal if and only if $\Gamma=\text{Comm}(\Gamma)$. In this case there are no $\Gamma$-special polynomials. We have a precise description of when this occurs:
\begin{fct}\cite[Theorem 2]{Sing}\label{nonmaximal}
A triangle group $\Gamma_{(k,l,m)}$ is maximal if and only if $(k,l,m)$ is not of the form 
\begin{enumerate}
    \item $(2, l, 2l)$
    \item $(3, l, 3l)$
    \item $(k, l, l)$
\end{enumerate} 
with $k,l,m\in\mathbb{N}\cup\{\infty\}$ not necessarily in increasing order.
\end{fct}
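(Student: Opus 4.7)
The plan is to deduce the statement from a classification of finite-index inclusions between triangle groups. By Fact \ref{trianglefct}(1), any Fuchsian overgroup of a triangle group is itself a triangle group, so $\Gamma_{(k,l,m)}$ is non-maximal if and only if there exists a proper finite-index inclusion $\Gamma_{(k,l,m)} \subsetneq \Gamma_{(k',l',m')}$ into some other triangle group. It therefore suffices to (i) exhibit such inclusions in the three listed families, and (ii) show no other signatures admit such an inclusion.

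For the sufficiency direction, I would produce explicit finite-index inclusions by dissecting hyperbolic triangles. In the isoceles case $(k,l,l)$, bisecting the triangle with angles $(\pi/k, \pi/l, \pi/l)$ along its axis of symmetry yields two congruent right triangles with angles $(\pi/2, \pi/(2k), \pi/l)$; the reflection across the bisector realizes $\Gamma_{(k,l,l)}$ as an index-$2$ subgroup of $\Gamma_{(2, 2k, l)}$. In the case $(2,l,2l)$, a direct area computation shows $\mathrm{Area}\,\triangle(2,l,2l) = \pi(l-3)/(2l) = 3\,\mathrm{Area}\,\triangle(2,3,2l)$, and an explicit tiling presents $\triangle(2,l,2l)$ as a union of three $\triangle(2,3,2l)$ pieces, giving an inclusion of index $3$. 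The case $(3,l,3l)$ is handled analogously by exhibiting the $(3,l,3l)$ triangle as a union of a suitable small number of copies of a smaller triangle (in $\Gamma_{(2,3,2l)}$), yielding the desired finite-index inclusion.

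For the necessity direction, suppose $\Gamma_{(k,l,m)} \subsetneq \Gamma_{(k',l',m')}$ is a proper finite-index inclusion. This induces a branched covering of orbifolds $\mathbb H / \Gamma_{(k,l,m)} \to \mathbb H / \Gamma_{(k',l',m')}$, both of which are spheres with three cone points. Writing down the Riemann--Hurwitz formula for this cover constrains both the possible index $n=[\Gamma_{(k',l',m')}:\Gamma_{(k,l,m)}]$ and the way the three cone points of signature $(k,l,m)$ lie over those of signature $(k',l',m')$. A careful case analysis, stratifying by the ramification profile and by which cone order divides which, shows that the only possibilities force $(k,l,m)$ (up to permutation) to be one of $(2,l,2l)$, $(3,l,3l)$, or $(k,l,l)$. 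This is essentially Singerman's argument in \cite{Sing}.

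The main obstacle is the necessity direction: the Riemann--Hurwitz analysis produces a number of a priori feasible numerical solutions, and one has to eliminate the would-be sporadic exceptional inclusions between specific small triangle groups by checking that each such numerical candidate either collapses into one of the three listed families (after permutation of entries) or fails to be realized by an actual subgroup inclusion. Verifying this exhaustively is the technical heart of Singerman's proof, and I would follow his combinatorial bookkeeping rather than attempt a different route.
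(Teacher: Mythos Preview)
The paper does not give its own proof of this statement: it is recorded as a \emph{Fact} and simply attributed to \cite[Theorem~2]{Sing}. So there is nothing in the paper to compare your argument against; what you have sketched is essentially Singerman's original proof, and your overall strategy (reduce via Fact~\ref{trianglefct}(1) to inclusions between triangle groups, exhibit the inclusions by triangle dissection, and run Riemann--Hurwitz for the converse) is the right one.

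One concrete slip: in the sufficiency direction you place $\Gamma_{(3,l,3l)}$ inside $\Gamma_{(2,3,2l)}$, but the hyperbolic areas do not match---the ratio is $4(l-2)/(l-3)$, which is not an integer for general $l$. The correct overgroup is $\Gamma_{(2,3,3l)}$, where the area ratio is exactly $4$, and Singerman's list records $(3,n,3n)\subset(2,3,3n)$ of index $4$. Your inclusions for the other two families, $(k,l,l)\subset(2,2k,l)$ of index $2$ and $(2,l,2l)\subset(2,3,2l)$ of index $3$, are correct. For the necessity direction your description is accurate: Singerman's Riemann--Hurwitz bookkeeping does produce a finite list of sporadic inclusions (e.g.\ $(2,7,7)\subset(2,3,7)$ of index $9$), but in every case the smaller group already belongs to one of the three infinite families after permuting the entries, so the characterisation of non-maximality stands.
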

We can now give a finer version of some results in \cite{CasFreNag}. We will denote by $\mathcal{M}$ the set of triples of natural numbers that has form given in Fact \ref{nonmaximal}.
\begin{thm}\label{nonarith}
Let $\Gamma=\Gamma_{(k,l,m)}$ be a non-arithmetic triangle group.
\begin{enumerate}
    \item If $(k,l,m)\in \mathcal{M}$, then there is $s\in\mathbb{N}_{>1}$ such that if $y_1,\ldots,y_n$ are distinct solutions of the Schwarzian equation $(\star)$ (with $\alpha=k,\beta=l,\gamma=m$) satisfying 
\[\text{tr.deg.}_{\mathbb{C}}\mathbb{C}\gen{y_1,\ldots,y_n}=3n,\]
then for all other solutions $y$, except for at most $n\cdot s$, 
\[\text{tr.deg.}_{\mathbb{C}}\mathbb{C}\gen{y_1,\ldots,y_n,y}=3(n+1).\]
\item If $(k,l,m)\not\in \mathcal{M}$, then for any distinct solutions $y_1,\ldots,y_n$ of the Schwarzian equation $(\star)$ (with $\alpha=k,\beta=l,\gamma=m$) we have that
\[\text{tr.deg.}_{\mathbb{C}}\mathbb{C}\gen{y_1,\ldots,y_n}=3n,\]
\end{enumerate}
\end{thm}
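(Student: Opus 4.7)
The plan is to combine the strong minimality/geometric triviality machinery from Facts \ref{hyperbolic} and \ref{generalschwarzian} with the explicit algebraic description of commensurators coming from Facts \ref{trianglefct} and \ref{nonmaximal}, and to upgrade the coset-indexed statement of Fact \ref{IndNonArithmetic} to one that treats arbitrary solutions in $\mathcal{U}$. Since we are in the hyperbolic case, Fact \ref{hyperbolic} gives that the set $\mathcal{Y}$ defined by $(\star)$ with $R=R_{k,l,m}$ is strongly minimal and geometrically trivial (it is of order $3$ and defined over $\mathbb{C}$). So by Fact \ref{generalschwarzian}, any algebraic dependence over $\mathbb{C}$ among the derivatives of distinct solutions $y_1,\ldots,y_n$ reduces to an algebraic relation $P(y_i,y_j)=0$ for some pair $i<j$ and some $P\in \mathbb{C}[X,Y]$.

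Next, I would identify such pairwise polynomial relations with $\mathrm{Comm}(\Gamma)$-correspondences. Since the space of solutions of $(\star)$ is a $\mathrm{PSL}_2(\mathbb{C})$-torsor (Proposition \ref{riccati}) and $J=J_{(k,l,m)}$ is one such solution, every solution has the shape $J(g\cdot t)$ for a suitable Möbius transformation $g$. By the Poincaré-based argument preceding Fact \ref{ArithmeticityTHM}, if $g_2g_1^{-1}\in \mathrm{Comm}(\Gamma)$ then $\Phi(J(g_1 t),J(g_2 t))=0$ for some $\Phi\in \mathbb{C}[X,Y]$. The converse direction uses non-arithmeticity: by Margulis (Fact \ref{ArithmeticityTHM}) $\mathrm{Comm}(\Gamma)$ is a discrete Fuchsian group containing $\Gamma$ with finite index, hence by Fact \ref{trianglefct}(2) it is itself a triangle group, and no algebraic relation between two distinct solutions can come from anything other than an element of $\mathrm{Comm}(\Gamma)$ (any such relation forces both solutions to be automorphic for a common Fuchsian group, which must lie inside $\mathrm{Comm}(\Gamma)$).

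With this dictionary in hand, both parts follow quickly. For part (2), the assumption $(k,l,m)\notin\mathcal{M}$ combined with Fact \ref{nonmaximal} says $\Gamma$ is maximal, so using $[\mathrm{Comm}(\Gamma):\Gamma]<\infty$ and the maximality of $\Gamma$ forces $\mathrm{Comm}(\Gamma)=\Gamma$; hence no non-trivial $P(y_i,y_j)=0$ can hold for distinct solutions, and $\mathrm{tr.deg.}_{\mathbb{C}}\mathbb{C}\langle y_1,\ldots,y_n\rangle=3n$. For part (1), set $s:=[\mathrm{Comm}(\Gamma):\Gamma]$, which is a fixed finite integer depending only on $\Gamma$. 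For each $y_i=J(g_i t)$, the solutions $y=J(gt)$ algebraically related to $y_i$ are precisely those with $g g_i^{-1}\in\mathrm{Comm}(\Gamma)$, giving at most $s-1$ solutions distinct from $y_i$ itself. Summing over $i$ produces at most $n(s-1)\le ns$ possibly exceptional solutions; for all others the geometric triviality of $\mathcal{Y}$ forces $\mathrm{tr.deg.}_{\mathbb{C}}\mathbb{C}\langle y_1,\ldots,y_n,y\rangle=3(n+1)$.

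The main obstacle is the second paragraph: rigorously identifying the binary non-orthogonality relation on $\mathcal{Y}$ with the $\mathrm{Comm}(\Gamma)$-action on the solution set. One clean way is to argue model-theoretically that, on the strongly minimal geometrically trivial set $\mathcal{Y}$, the family of definable algebraic correspondences between two solutions forms a countable set indexed by $\mathrm{PSL}_2(\mathbb{R})$-elements which normalize $\Gamma$ up to finite index, and by Margulis (Fact \ref{ArithmeticityTHM}) this set equals $\mathrm{Comm}(\Gamma)/\Gamma$. Everything else is bookkeeping: the independence conclusions in (1) and (2) follow immediately from the $3$-transcendence-degree dichotomy built into strong minimality combined with the counting above.
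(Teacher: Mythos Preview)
Your approach is essentially the same as the paper's: reduce via Facts \ref{hyperbolic} and \ref{generalschwarzian} to pairwise polynomial relations, identify these as $\Gamma$-special polynomials, and then split cases according to whether $\Gamma$ is maximal using Facts \ref{trianglefct} and \ref{nonmaximal}. The paper handles what you correctly flag as the ``main obstacle'' (that any polynomial relation between two distinct solutions must be $\Gamma$-special) simply by invoking Lemmas 5.15 and 5.16 of \cite{CasFreNag}, so your sketch in the second and fourth paragraphs is aiming at exactly the right target.
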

\begin{proof}
As mentioned above this is basically just a refinement of Theorem 2.14 in \cite{CasFreNag}. The ideas of the proof are as follows. First for triangle groups, Theorem \ref{generalschwarzian} is more precise. By lemmas 5.15 and 5.16 of \cite{CasFreNag}, the polynomials (in $\mathbb{C}[X_1,X_2]$) that can witness algebraic dependencies among solutions must all be $\Gamma$-special. 

Now, if $(k,l,m)\in \mathcal{M}$, then $\Gamma$ is properly contained in $\text{Comm}(\Gamma)$. But since $\Gamma$ is non-arithmetic, $\Gamma$ has finite index (say $s>1$) in $\text{Comm}(\Gamma)$. From this we get the desired result. On the other hand, if $(k,l,m)\not\in \mathcal{M}$, then $\Gamma=\text{Comm}(\Gamma)$ and so there all no $\Gamma$-special polynomials.
\end{proof}
We are now ready to prove our next theorem.
\begin{thm}\label{THM2}
The set defined by a generic Schwarz triangle equation ($\star$) is strictly disintegrated. That is, if $K$ is any differential field extension of $\mathbb{C}$ and $y_1,\ldots,y_n$ are distinct solutions that are not algebraic over $K$, then $$tr.deg._KK(y_1,y'_1,y''_1,\ldots,y_n,y'_n,y''_n)=3n$$.
\end{thm}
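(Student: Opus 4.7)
The plan is to mimic the descent strategy of Proposition \ref{THM1}: encode the failure of strict disintegratedness as a first-order $L_\partial$-formula in $(\alpha,\beta,\gamma)$, use Fact \ref{descent} to transfer this failure to an integer triple $(k,l,m)$ corresponding to a non-arithmetic, maximal triangle group, and then derive a contradiction with Theorem \ref{nonarith}(2).

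Suppose for contradiction that strict disintegratedness fails, so that there exist a differential field extension $K$ of $\mathbb{C}$ and distinct solutions $y_1,\ldots,y_n$ of the generic Schwarz triangle equation $(\star)$, none algebraic over $K$, with $\text{tr.deg.}_K K(y_1,y_1',y_1'',\ldots,y_n,y_n',y_n'') < 3n$. By Proposition \ref{THM1}, the solution set is strongly minimal, and hence, by Fact \ref{geo}, geometrically trivial. Theorem \ref{generalschwarzian} then furnishes indices $i<j$ and a nonzero $P \in \mathbb{C}[X_1,X_2]$ with $P(y_i,y_j)=0$; fix $d=\deg P$.

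Following the proof of Proposition \ref{THM1}, I would next write down a $\mathbb{Q}$-definable $L_\partial$-formula $\theta_d(u_1,u_2,u_3)$ asserting that $u_1,u_2,u_3$ are constants and that there exist $y_1\neq y_2$ in $\mathcal{U}$ both satisfying $(\star)$ with parameters $(u_1,u_2,u_3)$, together with constants $(a_{ij})_{i+j\leq d}$ not all zero, such that $\sum a_{ij}\,y_1^i y_2^j = 0$. By the preceding step, $\mathcal{U}\models\theta_d(\alpha,\beta,\gamma)$, and since $\alpha,\beta,\gamma$ are algebraically independent over $\mathbb{Q}$, they form a generic point of $V=\mathbb{A}^3$ over $F=\mathbb{Q}$. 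Applying Fact \ref{descent} produces a $\mathbb{Q}$-constructible, Zariski-dense subset $U \subseteq \mathbb{A}^3$ on which $\theta_d$ holds.

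The main obstacle I anticipate is the combinatorial step of locating the right integer triple in $U$. Since $U$ is constructible and Zariski-dense in $\mathbb{A}^3$, it contains a nonempty Zariski-open $U' \subseteq \mathbb{A}^3$. Let $Z \subseteq \mathbb{A}^3$ be the union of the finitely many arithmetic triples in the Takeuchi list, the finitely many integer triples with $\tfrac{1}{k}+\tfrac{1}{l}+\tfrac{1}{m}\geq 1$, and the Zariski-closed subset cut out by the defining conditions of $\mathcal{M}$ from Fact \ref{nonmaximal}: the diagonals $\{u_i=u_j\}$ and the lines $\{u_i=2,\,u_k=2u_j\}$, $\{u_i=3,\,u_k=3u_j\}$, over all permutations of indices. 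Each component of $Z$ is a proper closed subset, so $Z$ itself is a proper closed subset of $\mathbb{A}^3$, and $U'\setminus Z$ is nonempty and Zariski-open. As $\mathbb{N}_{\geq 2}^3$ is Zariski-dense in $\mathbb{A}^3$, it meets $U'\setminus Z$, yielding an integer triple $(k,l,m)$ with $\mathcal{U}\models\theta_d(k,l,m)$, $\Gamma_{(k,l,m)}$ hyperbolic and non-arithmetic, and $(k,l,m)\notin\mathcal{M}$. But then $\theta_d(k,l,m)$ provides distinct solutions $y_1\neq y_2$ of $(\star)$ at these parameters and a nonzero $P\in\mathbb{C}[X_1,X_2]$ with $P(y_1,y_2)=0$, directly contradicting Theorem \ref{nonarith}(2), which forces $\text{tr.deg.}_\mathbb{C}\mathbb{C}\langle y_1,y_2\rangle=6$.
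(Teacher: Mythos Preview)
Your proposal is correct and follows essentially the same descent strategy as the paper's proof: reduce via Fact~\ref{generalschwarzian} to a binary polynomial relation, encode its existence as an $L_\partial$-formula over $\mathbb{Q}$, apply Fact~\ref{descent} to specialize $(\alpha,\beta,\gamma)$ to an integer triple $(k,l,m)$ giving a maximal non-arithmetic triangle group, and contradict Theorem~\ref{nonarith}(2). Your treatment of the combinatorial step (locating a suitable $(k,l,m)$ in the dense constructible set while avoiding $\mathcal{W}$) is spelled out in more detail than the paper's, but the argument is the same.
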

We will again make use of Fact \ref{descent}. In what follows $\mathcal{W}$ will denote the union of $\mathcal{M}$ with the finite set consisting of triples of natural number corresponding to arithmetic triangle groups.
\begin{proof}[Proof of Theorem \ref{THM2}] 
We begin with the same conventions as in the proof of Proposition \ref{THM1}. Let $\alpha,\beta,\gamma$ be algebraically independent over $\mathbb{Q}$ and denote by $C$ the field of constants generated by $\alpha,\beta,\gamma$ over $\mathbb{Q}$. We write $X(\alpha,\beta,\gamma)$ for the set defined by ODE $(\star)$ and assume $X(\alpha,\beta,\gamma)=\{y\in\mathcal{U}\;:\;\models\phi(y,\alpha,\beta,\gamma)\}$ for some $L_{\partial}$-formula $\phi(y,u_1,u_2,u_3)$. By Proposition \ref{THM1}, we have that $X(\alpha,\beta,\gamma)$ is strongly minimal.

Let $y_1,\ldots,y_n$ be distinct elements of $X(\alpha,\beta,\gamma)$ and for contradiction assume that there is an algebraic relation between
\[y_1,y'_1,y''_1\cdots,y_n,y'_n,y''_n.\]
Using Theorem \ref{generalschwarzian}, we have a polynomial $P\in\mathbb{C}[x,y]$ and some $i<j$ such that $P(y_i,y_j)=0$. Let us write $P(x,y)$ as $P(x,y,\alpha,\beta,\gamma,\overline{b})$ where $\overline{b}$ is a tuple of complex numbers all distinct from $\alpha,\beta,\gamma$. We let $\rho(u_1,u_2,u_3)$ be the $L_{\partial}$-formula such that $\rho(\alpha,\beta,\gamma)$ is the true $L_{\partial}$-sentence
\[\exists\overline{v}\exists x\exists y\left(\overline{v}'=\overline{0}\wedge x\neq y\wedge\phi(x,\alpha,\beta,\gamma)\wedge\phi(y,\alpha,\beta,\gamma)\wedge P(x,y,\alpha,\beta,\gamma,\overline{v})=0\right).\]
We can now use Fact \ref{descent} with $V = \m A^3$ and and $F=\mathbb{Q}$ to obtain a triple $(k,l,m)$ of natural number such that $(k,l,m)\not\in\mathcal{W}$, $2<k<l<m$, $\frac{1}{k}+\frac{1}{l}+\frac{1}{m}<1$ and $\mathcal{U}\models\rho(k,l,m)$. But this means that there is a polynomial in $\mathbb{C}[x,y]$ which vanishes on two distinct element of $X(k,l,m)$. This contradicts case (2) of Theorem \ref{nonarith}.
\end{proof}
Similar to Proposition \ref{THM1}, the proof of the previous result applies in slightly more generality, so similarly to Corollary \ref{genericpt}, we next sketch the most general version of Theorem \ref{THM2} which can be established with the methods we used above. 

\begin{cor} \label{genericpt_dis}
Suppose that $(\alpha, \beta, \gamma)$ is the generic point on an irreducible algebraic variety $V \subset \m A^3$ over $\m Q$ of dimension at least one such that $V$ has a dense set of points with coordinates in $\m N^3.$ Work with coordinates $(x,y,z).$ Assume further that
\begin{enumerate} 
\item $V$ is not the curve given by $x=2, z=2y$. 
\item $V$ is not the curve given by $x=3, z=3y$. 
\item $V$ is not contained in surface $y=z$. 
\end{enumerate} 
The set defined by the Schwarz triangle equation ($\star$) is strictly disintegrated. That is, if $K$ is any differential field extension of $\mathbb{C}$ and $y_1,\ldots,y_n$ are distinct solutions to ($\star$) with parameters $(\alpha, \beta, \gamma)$ that are not algebraic over $K$, then
$$tr.deg._KK(y_1,y'_1,y''_1,\ldots,y_n,y'_n,y''_n)=3n$$
\end{cor}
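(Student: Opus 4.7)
The plan is to follow the argument of Theorem \ref{THM2} essentially verbatim, replacing the ambient space $\m A^3$ by the variety $V$ when invoking Fact \ref{descent}. Since $(\alpha,\beta,\gamma)$ is the generic point of $V$ and $V$ carries a Zariski-dense set of points in $\m N_{>1}^3$, Corollary \ref{genericpt} gives that $X(\alpha,\beta,\gamma)$ is strongly minimal, so Fact \ref{generalschwarzian} is available.

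Suppose for contradiction that some distinct solutions $y_1,\ldots,y_n$, not algebraic over $K$, satisfy a nontrivial algebraic relation among the $3n$ elements $y_i,y_i',y_i''$. Fact \ref{generalschwarzian} produces indices $i<j$ and a polynomial $P\in\m C[x,y]$ with $P(y_i,y_j)=0$. Writing $P$ as $P(x,y,\alpha,\beta,\gamma,\bar b)$ with the coefficient parameters collected into a tuple $\bar b$, we encode by an $L_\partial$-formula $\rho(u_1,u_2,u_3)$ the existence of such parameters together with two distinct solutions of $(\star)$ satisfying $P$, exactly as in the proof of Theorem \ref{THM2}. Thus $\mc U \models \rho(\alpha,\beta,\gamma)$.

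Now apply Fact \ref{descent} to $\rho$ with the variety $V$ in place of $\m A^3$ and $F=\m Q$: the locus in $V$ where $\rho$ holds is a Zariski-constructible set containing the generic point, hence contains a Zariski-dense open subset $U \subset V$. The goal is to exhibit a triple $(k,l,m)\in U\cap\m N^3$ which corresponds to a maximal non-arithmetic hyperbolic triangle group, that is $(k,l,m)\notin\mathcal W$ and $\tfrac{1}{k}+\tfrac{1}{l}+\tfrac{1}{m}<1$. By hypothesis $V$ has a Zariski-dense set of points in $\m N_{>1}^3$; the arithmetic triples and the non-hyperbolic triples contribute only finitely many points; and the $\mathcal M$-locus (including all permutations, as allowed by Fact \ref{nonmaximal}) is a finite union of curves and surfaces, cut out by the conditions $(2,l,2l)$, $(3,l,3l)$, $(k,l,l)$ and their permutations, which modulo the M\"obius symmetry of the Schwarzian that permutes $\{0,1,\infty\}$ are precisely the loci excluded in conditions (1)--(3). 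Hence $V$ is not contained in any such component, so their intersection with $V$ is a proper closed subset, and $U\cap\m N_{>1}^3$ still contains suitable integer triples. At any such $(k,l,m)$, Theorem \ref{nonarith}(2) rules out any algebraic relation between distinct solutions of the corresponding Schwarzian equation, contradicting $\mc U\models\rho(k,l,m)$.

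The main obstacle is the bookkeeping in the third step: one must verify carefully that the three conditions (1)--(3) in the statement, combined with the natural symmetries of the Schwarzian equation under the six permutations of the angle parameters, really do exclude $V$ from each component of the $\mathcal M$-locus, and simultaneously guarantee that removing the exceptional points from the dense integer locus leaves a dense set in $V$. Everything else is a direct adaptation of the descent-and-specialize scheme used for Theorem \ref{THM2}.
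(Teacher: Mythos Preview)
Your proposal is correct and follows essentially the same approach as the paper: adapt the proof of Theorem \ref{THM2} by applying Fact \ref{descent} with $V$ in place of $\m A^3$, and then verify that $V$ still contains a Zariski-dense set of integer triples corresponding to maximal non-arithmetic hyperbolic triangle groups. The paper organizes that last verification by a case split on $\dim V$ (surface versus curve) rather than your uniform argument, and it leaves implicit the permutation issue from Fact \ref{nonmaximal} that you flag and handle via the M\"obius symmetry of the Schwarzian.
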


\begin{proof} The argument used in the proof of Theorem \ref{THM2} applies directly to the triple $(\alpha, \beta , \gamma)$ whenever  it has the property that
\begin{itemize} 
\item The Zariski closure of $(\alpha, \beta , \gamma)$ over $\m Q$ contains a dense set of points of $\m N^3$ which correspond to maximal triangle groups. 
\end{itemize} 

Work with coordinates $(x,y,z)$ in $\m A^3$ in what follows. So, when $V$ is any algebraic surface with dense $\m N$-points and $V$ is not given by $y=z$, the result follows. Suppose that $V$ is any algebraic curve with dense $\m N$-points such that none of the following hold: 
\begin{itemize} 
\item $V$ is given by $x=2, z=2y$.
\item $V$ is given by $x=3, z=3y$
\item $V$ lies on the surface $y=z$. 
\end{itemize} 
As long as none of the three conditions holds we have infinitely many points of $\m N^3$ corresponding to maximal triangle groups in the Zariski closure of $(\alpha, \beta, \gamma)$ over $\m Q$, and thus the argument of the proof of Theorem \ref{THM2} applies to yield our result. 
\end{proof}

\subsection{Orthogonality} \label{ortho}
We will now study the possible algebraic relations between solutions of two generic Schwarzian equations. We will show that the definable sets are orthogonal:

\begin{defn} Let $\mathcal X$ and $\mathcal Y$ be two strongly minimal sets both defined over some differential field $K$.
\begin{enumerate}
    \item $\mathcal X$ and $\mathcal Y$ are {\em nonorthogonal} if there is some definable (possibly with additional parameter) relation $\mathcal R\subset \mathcal X\times \mathcal Y$ such that the images of the projections of $\mathcal R$ to $\mathcal X$ and $\mathcal Y$ respectively are infinite and these projections are finite-to-one.
    \item $\mathcal X$ and $\mathcal Y$ are {\em  non weakly orthogonal} if they are nonorthogonal, that is there is an infinite finite-to-finite relation $\mathcal R\subseteq \mathcal X\times \mathcal Y$, and the formula defining $\mathcal R$ can be chosen to be over $K^{alg}$.
\end{enumerate}
\end{defn}

\begin{rem}\label{Remarkalg} Suppose $\mathcal X$ and $\mathcal Y$  (as above) are nonorthogonal and that the relation $\mathcal R\subset \mathcal X\times \mathcal Y$ witnessing nonorthogonality is defined over some differential field $F$ extending $K$. Then by definition for any $x\in \mathcal X\setminus F^{alg}$ there exist $y\in \mathcal Y\setminus F^{alg}$ such that $(x,y)\in \mathcal R$. In that case $F\gen{x}^{alg}=F\gen{y}^{alg}$, that is $x,y$ and derivatives are algebraically dependent over $F$.
\end{rem}
We will need the following important fact. We restrict ourselves to strictly disintegrated strongly minimal sets as this is all we need for the Schwarzian equations. We direct the reader to \cite[Corollary 2.5.5]{GST} for the more general context.

\begin{fct}\label{weaklyort}
Let $\mathcal X$ and $\mathcal Y$ be strongly minimal sets both defined over some differential field $K$. Assume further that they are both strictly disintegrated. If $\mathcal X$ and $\mathcal Y$ are nonorthogonal, then they are non weakly orthogonal.
\end{fct}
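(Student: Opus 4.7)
The plan is to reduce the statement to a canonical-base descent, which is the content of the cited \cite[Corollary 2.5.5]{GST} specialized to our trivial (in the sense of strict disintegration) setting.

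First I would set up: let $\mathcal{R}\subseteq \mathcal{X}\times \mathcal{Y}$ be an $F$-definable finite-to-finite correspondence with infinite projections witnessing nonorthogonality, where $F\supseteq K$ is a small differential field. Choose a pair $(x,y)\in \mathcal{R}$ that is generic over $F^{alg}$: take $x\in \mathcal{X}$ with $x\notin F^{alg}$ and then $y\in \mathrm{acl}(Fx)\setminus F^{alg}$ with $(x,y)\in \mathcal{R}$; such $y$ exists because the projection of $\mathcal{R}$ to $\mathcal{Y}$ is infinite. By strong minimality, $x$ and $y$ realize the unique non-algebraic types of $\mathcal{X}$ and $\mathcal{Y}$ over $F^{alg}$, and by symmetry $x\in\mathrm{acl}(Fy)$.

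The key reduction is to show $y\in \mathrm{acl}(K^{alg},x)$. Once this is established, the Kolchin locus $V$ of $(x,x',x'',y,y',y'')$ over $K^{alg}$ is an irreducible $K^{alg}$-definable subvariety of $\mathcal{X}\times\mathcal{Y}$; its first projection is dominant onto the $3$-dimensional locus of $\mathcal{X}$ and generically finite by the claim, and the second projection is dominant and generically finite by the symmetric argument applied to $x\in\mathrm{acl}(K^{alg},y)$. Thus $V$ is a $K^{alg}$-definable infinite finite-to-finite correspondence containing $(x,y)$, giving the desired non-weak-orthogonality.

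The descent $y\in\mathrm{acl}(K^{alg},x)$ is precisely the statement that the canonical base $\mathrm{Cb}(\mathrm{tp}(x,y/F))$ lies in $\mathrm{dcl}^{eq}(K^{alg})$. Strict disintegration of $\mathcal{X}$ and $\mathcal{Y}$ implies their generic types generate trivial pregeometries, so by the one-basedness of trivial $\omega$-stable theories one has $\mathrm{Cb}(\mathrm{tp}(x,y/F))\subseteq \mathrm{acl}^{eq}(xy)$ and, after intersecting with $\mathrm{dcl}^{eq}(F)$ and exploiting that strict disintegration forces the type of any new realization $x$ of the generic of $\mathcal{X}$ to be determined over $K^{alg}$, this canonical base must collapse into $\mathrm{dcl}^{eq}(K^{alg})$.

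The main obstacle is justifying this last step rigorously, i.e., running the canonical-base argument to show $\mathrm{Cb}(\mathrm{tp}(x,y/F))\subseteq \mathrm{dcl}^{eq}(K^{alg})$; a hands-on attempt via saturation (picking many conjugates $y_i$ of $y$ over $K^{alg}(x)$ and using strict disintegration of $\mathcal{Y}$ to force algebraic independence of the $y_i$, then contradicting that each $y_i\in\mathrm{acl}(\sigma_i(F),x)$) tends to require the same canonical-base machinery to bound the transcendence contributed by the conjugate parameters. Consequently the cleanest route is to invoke the abstract geometric stability result \cite[Corollary 2.5.5]{GST}, of which the present fact is a direct specialization. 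A concrete alternative, bypassing GST, would be to phrase the argument via the differential Galois action on the fibers of $\mathcal{R}\to \mathcal{X}$ together with Fact \ref{descent}, but the stability-theoretic proof is shorter and already in the literature.
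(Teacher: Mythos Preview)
The paper does not prove this statement at all: it is stated as a \emph{Fact} and the reader is simply directed to \cite[Corollary 2.5.5]{GST}, which is exactly the reference you ultimately invoke. Your additional sketch of the canonical-base descent is reasonable and correctly identifies the reduction to $y\in\mathrm{acl}(K^{alg},x)$, but as you yourself note, completing it rigorously requires the geometric stability machinery from \cite{GST}; so your proposal and the paper's treatment coincide.
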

So by Theorem \ref{THM2}, we see that if the solution sets of two generic Schwarzian equations are nonorthogonal, then they are non weakly orthogonal. As with the proofs of \ref{THM1} and \ref{THM2} our strategy is to make a ``descent'' argument to the triangle groups. We review the relevant results in \cite{CasFreNag}.

Let $\Gamma_1$ and $\Gamma_2$ be two triangle group. We say that $\Gamma_1$ is commensurable with $\Gamma_2$ in {\it wide sense} if $\Gamma_1$ is commensurable to some conjugate of $\Gamma_2$. In particular, if $\Gamma_1$ is commensurable with $\Gamma_2$ in wide sense, then $\text{Comm}(\Gamma_1)$ is conjugate to $\text{Comm}(\Gamma_2)$.

\begin{rem}\label{differentType}
Suppose that $\Gamma_1=\Gamma_{(k_1,l_1,m_1)}$ and $\Gamma_2=\Gamma_{(k_2,l_2,m_2)}$ are two distinct maximal non-arithmetic triangle groups (that is assume $\text{Comm}(\Gamma_1)=\Gamma_1$ and $\text{Comm}(\Gamma_2)=\Gamma_2$). We have that $\Gamma_1$ is not commensurable with $\Gamma_2$ in wide sense. This follows since $\Gamma_1$ is not conjugate to $\Gamma_2$ - the two group not being of same type.
\end{rem}

\begin{fct}{\cite[Theorem 6.5]{CasFreNag}}\label{TriangleOrtho} Suppose that $\Gamma_{(k_1,l_1,m_1)}$ and $\Gamma_{(k_2,l_2,m_2)}$ are two triangle groups that are not commensurable in wide sense. Then the sets defined by the two Schwarzian equations ($\star$) (with parameters $(k_1,l_1,m_1)$ and $(k_2,l_2,m_2)$ respectively) are orthogonal.
\end{fct}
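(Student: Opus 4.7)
My plan is to argue by contradiction. Suppose the solution sets $X_1$ and $X_2$ of the two Schwarz triangle equations (with parameters $(k_i,l_i,m_i)$, $i=1,2$) are not orthogonal. By Fact \ref{hyperbolic}, both $X_1$ and $X_2$ are strongly minimal (and geometrically trivial). Nonorthogonality produces a definable finite-to-finite correspondence $\mathcal{R} \subseteq X_1 \times X_2$ with infinite projections, so for generic $y_1 \in X_1$ there exists $y_2 \in X_2$ with $(y_1,y_2) \in \mathcal{R}$, and $y_1,y_2$ (together with all derivatives) are interalgebraic over whatever field defines $\mathcal{R}$. The first step is to show that this correspondence descends to an algebraic relation between $y_1$ and $y_2$ with parameters in $\mathbb{C}$: concretely, a nonzero polynomial $P \in \mathbb{C}[X,Y]$ with $P(y_1,y_2)=0$ for $(y_1,y_2)$ on a dense subset of $\mathcal{R}$. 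This is an application of the binary-relation structure theorem behind Theorem \ref{generalschwarzian}, combined with the fact that strong minimality forces the correspondence to be interdefinable with a polynomial hypersurface in $X_1\times X_2$ using just the first two derivatives (and, in the strictly disintegrated regime, Fact \ref{weaklyort} gives the descent to $\mathbb{C}^{\mathrm{alg}}=\mathbb{C}$ immediately).

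Once we have the relation $P(y_1,y_2)=0$ over $\mathbb{C}$, I would translate it into a statement about Schwarz triangle uniformizers. Since the Schwarzian equation is $\mathrm{PSL}_2(\mathbb{C})$-invariant in the sense of Proposition \ref{riccati}, every solution of the $i$-th equation is of the form $J_{(k_i,l_i,m_i)}(\sigma_i t)$ for some $\sigma_i\in\mathrm{PSL}_2(\mathbb{C})$. Writing $J_i = J_{(k_i,l_i,m_i)}$, reparametrising by $\sigma_1^{-1}$, and setting $\sigma = \sigma_2\sigma_1^{-1}$, the algebraic relation becomes an identity
\[
P\bigl(J_1(t),\, J_2(\sigma t)\bigr) = 0
\]
valid on a neighborhood in $\mathbb{H}$. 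For every $\gamma \in \Gamma_1$, the $\Gamma_1$-invariance of $J_1$ gives $P(J_1(t),J_2(\sigma\gamma t))=0$, so as $\gamma$ ranges over $\Gamma_1$ the values $J_2(\sigma\gamma t)$ lie in the finite fiber of $P(J_1(t),\cdot\,)$ above $J_1(t)$, of cardinality at most $\deg_Y P$.

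From this finiteness I would extract commensurability. The map $\gamma \mapsto J_2(\sigma\gamma t)$ factors through a set of size $\leq \deg_Y P$; by a pigeonhole, a finite-index subgroup $\Gamma_1'\leq\Gamma_1$ satisfies $J_2(\sigma\gamma t)=J_2(\sigma t)$ for all $\gamma\in\Gamma_1'$. Since $\Gamma_2$ is the exact automorphism group of $J_2$ on $\mathbb{H}$, this forces $\sigma\Gamma_1'\sigma^{-1}\subseteq\Gamma_2$. Then $\sigma\Gamma_1\sigma^{-1}\cap\Gamma_2\supseteq\sigma\Gamma_1'\sigma^{-1}$ has finite index in $\sigma\Gamma_1\sigma^{-1}$, and (using that $\Gamma_1',\Gamma_2$ are Fuchsian of the first kind, cf.\ Fact \ref{commenFact}) also in $\Gamma_2$. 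This means $\sigma\Gamma_1\sigma^{-1}$ and $\Gamma_2$ are commensurable, i.e., $\Gamma_1$ and $\Gamma_2$ are commensurable in wide sense, contradicting the hypothesis.

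The main obstacle is the first step, namely descending the witness of nonorthogonality to a polynomial relation over $\mathbb{C}$. For strictly disintegrated strongly minimal sets Fact \ref{weaklyort} does this directly, but for general triangle groups $\Gamma_{(k,l,m)}$ strict disintegratedness can fail (via $\Gamma$-special polynomials coming from the commensurator when $\Gamma$ is not maximal, or from arithmeticity via Fact \ref{ArithmeticityTHM}). The resolution is that such algebraic relations between two solutions of \emph{the same} equation are themselves defined over $\mathbb{C}$ and of a very restricted form; a careful bookkeeping of how these interact with a putative interalgebraicity between $X_1$ and $X_2$ (as carried out in \cite{CasFreNag}) yields the required $\mathbb{C}$-descent and lets the uniformization argument above go through.
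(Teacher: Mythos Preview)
The paper does not prove this statement at all: its entire proof is the sentence ``This is simply Theorem 6.5 of \cite{CasFreNag} restricted to the case of two triangle groups.'' Your proposal is a reconstruction of that external argument, and the overall architecture (nonorthogonality $\Rightarrow$ a $\mathbb{C}$-definable correspondence $\Rightarrow$ a polynomial identity between uniformizers $\Rightarrow$ wide commensurability via pigeonhole on the fibres of $P$) is the right one.

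Two comments on the details. First, your ``main obstacle'' is slightly misdiagnosed. You worry that Fact~\ref{weaklyort} only applies to strictly disintegrated sets and that triangle-group Schwarzians need not be strictly disintegrated. But the paper explicitly notes that Fact~\ref{weaklyort} is a restricted statement of \cite[Corollary~2.5.5]{GST}, and the general version there needs only local modularity; geometric triviality, which you have from Fact~\ref{hyperbolic} together with Fact~\ref{geo}, is enough. So the descent of the finite-to-finite correspondence to one defined over $\mathbb{C}$ comes for free from stability theory, with no bookkeeping about $\Gamma$-special polynomials. What is genuinely not automatic is the further step from an arbitrary $\mathbb{C}$-definable correspondence in $X_1\times X_2$ to a relation $P(y_1,y_2)=0$ involving no derivatives; this is where the foliation/Galois analysis behind Fact~\ref{generalschwarzian} (i.e., \cite[Theorem~5.10 and Remark~5.14]{CasFreNag}, extended to two possibly different Schwarzian equations) is actually doing work.

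Second, a small gap in your endgame: the M\"obius transformation $\sigma$ you produce lies a priori only in ${\rm PSL}_2(\mathbb{C})$, whereas wide commensurability of Fuchsian groups means conjugation inside ${\rm PSL}_2(\mathbb{R})$. You should observe that since $\sigma\Gamma_1'\sigma^{-1}\subseteq\Gamma_2\subset{\rm PSL}_2(\mathbb{R})$ with $\Gamma_1'$ Fuchsian of the first kind (limit set all of $\partial\mathbb{H}$), $\sigma$ must carry $\partial\mathbb{H}$ to itself and hence lies in ${\rm PSL}_2(\mathbb{R})$.
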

\begin{proof}
This is simply Theorem 6.5 of \cite{CasFreNag} restricted to the case of two triangle groups.
\end{proof}
This is all we need to prove the desired result. 

\begin{prop}
Let $\alpha_1,\beta_1,\gamma_1,\alpha_2,\beta_2,\gamma_2\in\mathbb{C}$ be algebraically independent over $\mathbb{Q}$. Let $\mathcal X(\alpha_1,\beta_1,\gamma_1)$ and $\mathcal X(\alpha_2,\beta_2,\gamma_2)$ be the set defined by the two generic Schwarzian equations ($\star$) (with parameters $(\alpha_1,\beta_1,\gamma_1)$ and $(\alpha_2,\beta_2,\gamma_2)$ respectively). Then $\mathcal X(\alpha_1,\beta_1,\gamma_1)$ is orthogonal to $\mathcal X(\alpha_2,\beta_2,\gamma_2)$.
\end{prop}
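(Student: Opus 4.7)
The plan is to mimic the descent strategy used in the proofs of Proposition \ref{THM1} and Theorem \ref{THM2}, reducing the statement to the case of two triangle groups not commensurable in wide sense, where Fact \ref{TriangleOrtho} applies. Suppose for contradiction that $\mathcal X(\alpha_1,\beta_1,\gamma_1)$ and $\mathcal X(\alpha_2,\beta_2,\gamma_2)$ are nonorthogonal. By Theorem \ref{THM2} both sets are strictly disintegrated, so Fact \ref{weaklyort} yields that they are non weakly orthogonal, i.e.\ there is a finite-to-finite correspondence defined over the algebraic closure of $K_0 := \mathbb{Q}(\alpha_1,\beta_1,\gamma_1,\alpha_2,\beta_2,\gamma_2)$. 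By Remark \ref{Remarkalg}, this gives solutions $y_1\in\mathcal X(\alpha_1,\beta_1,\gamma_1)$ and $y_2\in\mathcal X(\alpha_2,\beta_2,\gamma_2)$, not algebraic over $K_0$, such that the two triples $(y_1,y'_1,y''_1)$ and $(y_2,y'_2,y''_2)$ are algebraically dependent over $K_0^{alg}$. Equivalently, there is a nonzero polynomial $P(X_1,X_1',X_1'',X_2,X_2',X_2'',\bar v)$ with $\bar v$ a tuple of constants algebraic over $K_0$ which vanishes at $(y_1,y_1',y_1'',y_2,y_2',y_2'')$.

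Next, I would encode this into a first-order $L_{\partial}$-sentence with parameters among $\alpha_1,\beta_1,\gamma_1,\alpha_2,\beta_2,\gamma_2$. Writing $\phi(y,u_1,u_2,u_3)$ for the defining formula of the Schwarz triangle equation as in the proof of Proposition \ref{THM1}, the sentence
\[
\rho(u_1,u_2,u_3,w_1,w_2,w_3)\;:=\;\exists \bar v\,\exists y_1\,\exists y_2\Bigl(\bar v'=\bar 0\;\wedge\;\phi(y_1,\bar u)\;\wedge\;\phi(y_2,\bar w)\;\wedge\;\bigwedge_i P_i(\bar v)\neq 0\;\wedge\;P(y_1,y_1',y_1'',y_2,y_2',y_2'',\bar v)=0\Bigr)
\]
(where the $P_i$ are the nontrivial coefficients of $P$ in the $y$-variables) is true at $(\alpha_1,\beta_1,\gamma_1,\alpha_2,\beta_2,\gamma_2)$ in $\mathcal U$. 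One should also conjoin the requirement that $y_1,y_2$ are not algebraic over the parameters; this can be enforced by using that each $\mathcal X_i$ is strongly minimal, together with a bound coming from strict disintegratedness, so that the relation cannot be purely cofinite in either coordinate.

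Now apply Fact \ref{descent} with $V=\mathbb{A}^6$ over $F=\mathbb{Q}$. Since $(\alpha_1,\beta_1,\gamma_1,\alpha_2,\beta_2,\gamma_2)$ is generic on $\mathbb{A}^6$ over $\mathbb{Q}$, the formula $\rho$ holds on a Zariski-dense $\mathbb{Q}$-constructible subset $U\subseteq\mathbb{A}^6$. The key point is that $U$ contains a sextuple $(k_1,l_1,m_1,k_2,l_2,m_2)$ of natural numbers such that
\begin{itemize}
\item each $(k_i,l_i,m_i)$ is hyperbolic, i.e.\ $\tfrac{1}{k_i}+\tfrac{1}{l_i}+\tfrac{1}{m_i}<1$,
\item neither triple lies in $\mathcal W$ (so both groups $\Gamma_{(k_i,l_i,m_i)}$ are maximal and non-arithmetic), and
\item $(k_1,l_1,m_1)\neq(k_2,l_2,m_2)$ (as unordered triples).
\end{itemize}
Such conditions cut out a Zariski-dense subset of $\mathbb{A}^6$ over $\mathbb{Q}$, by Takeuchi's finiteness theorem and Fact \ref{nonmaximal}. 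By Remark \ref{differentType}, the two maximal non-arithmetic triangle groups $\Gamma_{(k_1,l_1,m_1)}$ and $\Gamma_{(k_2,l_2,m_2)}$ are then not commensurable in wide sense. But $\rho(k_1,l_1,m_1,k_2,l_2,m_2)$ says precisely that the corresponding definable sets $\mathcal X(k_1,l_1,m_1)$ and $\mathcal X(k_2,l_2,m_2)$ are nonorthogonal, contradicting Fact \ref{TriangleOrtho}.

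The technically delicate step, which I expect to be the main obstacle, is the precise first-order encoding of the nonorthogonality witness: one must ensure that the formula $\rho$ really does force nonorthogonality of $\mathcal X(k_1,l_1,m_1)$ and $\mathcal X(k_2,l_2,m_2)$ after descent (not merely a single algebraic relation between two specific solutions). This is handled by using strict disintegratedness in both the source and target triangle cases, as in Theorem \ref{THM2}: since the triangle sets are themselves strictly disintegrated, any single $\mathbb{C}$-algebraic relation between two non-algebraic solutions spreads to an infinite correspondence, which is exactly the witness of nonorthogonality ruled out by Fact \ref{TriangleOrtho}.
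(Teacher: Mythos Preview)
Your approach is essentially the paper's: assume nonorthogonality, invoke Fact~\ref{weaklyort} to obtain a correspondence over $\mathbb Q(\alpha_1,\beta_1,\gamma_1,\alpha_2,\beta_2,\gamma_2)^{alg}$, encode this as an $L_\partial$-formula in the six parameters, descend to integer triples giving distinct maximal non-arithmetic triangle groups, and contradict Fact~\ref{TriangleOrtho} via Remark~\ref{differentType}. The paper differs only in two implementation choices. First, it encodes the finite-to-finite relation $\mathcal R$ itself (rather than a polynomial $P$ vanishing on a pair of solutions), which makes the conclusion ``nonorthogonality holds after descent'' immediate and avoids your final paragraph entirely. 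Second, it performs the specialization in two stages---first $(\alpha_1,\beta_1,\gamma_1)$ with $F=\mathbb Q(\alpha_2,\beta_2,\gamma_2)$, then $(\alpha_2,\beta_2,\gamma_2)$ with $F=\mathbb Q$---rather than a single descent in $\mathbb A^6$; this makes it trivial to arrange $(k_1,l_1,m_1)\neq(k_2,l_2,m_2)$ at the second step. Your worry about the ``technically delicate step'' is in fact harmless: since $\phi$ includes $y'\neq 0$, neither $\mathcal X(k_i,l_i,m_i)$ has any point algebraic over $\mathbb C$, so after descent any pair $(y_1,y_2)$ with $P=0$ consists of generics and, by strong minimality, yields an infinite finite-to-finite correspondence---but the paper's encoding simply sidesteps this check.
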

\begin{proof}
As before, $\mathcal{W}$ will denote the union of $\mathcal{M}$ - the set of triples of natural numbers that has form given in Fact \ref{nonmaximal} - with the finite set consisting of triples of natural number corresponding to arithmetic triangle groups. 

For contradiction, if $\mathcal X_1=\mathcal X(\alpha_1,\beta_1,\gamma_1)$ is nonorthogonal to $\mathcal X_2=\mathcal X(\alpha_2,\beta_2,\gamma_2)$, then there is a definable finite-to-finite relation $\mathcal R\subset \mathcal X_1\times \mathcal X_2$ between the two sets and we can assume that the relation is defined over $\m Q(\alpha_1,\beta_1,\gamma_1,\alpha_2,\beta_2,\gamma_2)^{alg}$. Let $\sigma(u_1,v_1,w_1,u_2,v_2,w_2)$ be the $L_{\partial}$-formula such that $\sigma(\alpha_1,\beta_1,\gamma_1,\alpha_2,\beta_2,\gamma_2)$ is the true $L_{\partial}$-sentence stating that $\mathcal R\subset \mathcal X_1\times \mathcal X_2$ is a definable finite-to-finite relation. 

We can now use Fact \ref{descent} with $V = \m A^3$ and $F=\mathbb{Q}(\alpha_2,\beta_2,\gamma_2)$ to specialize $(\alpha_1,\beta_1,\gamma_1)$ and get a triple of integers $(k_1,l_1,m_1)\not\in\mathcal{W}$ such that $2<k_1<l_1<m_1$, $\frac{1}{k_1}+\frac{1}{l_1}+\frac{1}{m_1}<1$ and $\mathcal{U}\models\sigma(k_1,l_1,m_1,\alpha_2,\beta_2,\gamma_2)$. Notice that $\Gamma_{(k_1,l_1,m_1)}$ is a maximal non-arithmetic triangle group. 

Now we again apply Fact \ref{descent} with $V = \m A^3$ and $F=\mathbb{Q}$ - this time to specialize $(\alpha_2,\beta_2,\gamma_2)$ - and choose a triple of integers $(k_2,l_2,m_2)\not\in\mathcal{W}\cup\{(k_1,l_1,m_1)\}$ such that $2<k_2<l_2<m_2$, $\frac{1}{k_2}+\frac{1}{l_2}+\frac{1}{m_2}<1$ and $\mathcal{U}\models\sigma(k_1,l_1,m_1,k_2,l_2,m_2)$. This time we have a maximal non-arithmetic triangle group $\Gamma_{(k_2,l_2,m_2)}$ which is distinct from $\Gamma_{(k_1,l_1,m_1)}$.

But this means that there is a definable relation between $\mathcal X(k_1,l_1,m_1)$ and $\mathcal X(k_2,l_2,m_2)$, that is they are nonorthogonal. But the triples where chosen so that $\Gamma_{(k_1,l_1,m_1)}$ is not commensurable with $\Gamma_{(k_2,l_2,m_2)}$ in wide sense (see Remark \ref{differentType}). This contradicts Fact \ref{TriangleOrtho} above.
\end{proof}

\subsection{Non-zero Fibers of generic Schwarzian triangle equations} In this subsection we consider the differential operator 
\begin{eqnarray} \label{chieqn} \chi _{\alpha,\beta,\gamma, \frac{d}{dt}} (y) := S_{t}(y) +(y')^2 R_{\triangle}(y)\end{eqnarray}
and for $a\in\mathcal{U}$, study equations of the form  $$\chi _{\alpha,\beta,\gamma, \frac{d}{dt}}  (y) = a.$$ We call such equations the fibers of the Schwarzian triangle equations. For Fuchsian groups of the first kind and genus zero we have a complete description of the structure of the set of solutions of the fibers of the corresponding Schwarzian equations. The main result, stated in the case of the triangle groups, is as follows
\begin{fct}{\cite[Theorem 6.2]{CasFreNag}}\label{groupfiber}
Let $a\in\mathcal{U}$. Assumme that $\bigtriangleup(k,l,m)$ is a hyperbolic triangle. Then the set defined $\chi _{\alpha,\beta,\gamma, \frac{d}{dt}}  (y) = a$ (with $\alpha=k,\beta=l,\gamma=m$) is strongly minimal and geometrically trivial. Furthermore if $a_1, \ldots , a_n\in\mathcal{U}$ satisfy $\chi _{\alpha,\beta,\gamma, \frac{d}{dt}} (a_i) = a$ and are dependent\footnote{That is, there is a differential field extension $K$ of $\mathbb{Q}\gen{a}$ such that $tr.deg_KK\gen{a_1, \ldots , a_n}<3n$}, then there exist $i<j\leq n$ and a $\Gamma$-special polynomial $P$, such that $P(a_i, a_j) = 0$.
\end{fct}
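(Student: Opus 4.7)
The plan is to reduce the fiber equation $\chi(y)=a$ to the zero fiber via a change of independent variable, and then transfer the known results for the zero fiber. Using the cocycle identity for the Schwarzian, $S_t(y) = (\tau'(t))^2 S_\tau(y) + S_t(\tau)$, if we take $\tau$ to be a solution of $S_t(\tau)=a$ and view $y$ as a function of $\tau$, a direct computation gives
\[
\chi_{\alpha,\beta,\gamma, d/dt}(y) \;=\; (\tau'(t))^{2}\,\chi_{\alpha,\beta,\gamma, d/d\tau}(y) + S_t(\tau),
\]
so $\chi_{d/dt}(y)=a$ precisely when $\chi_{d/d\tau}(y)=0$. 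The solutions of $S_t(\tau)=a$ form a single $\mathrm{PSL}_2(\mathbb{C})$-orbit, and for each fixed such $\tau$ the solutions of $\chi_{d/d\tau}(y)=0$ are exactly the Schwarz triangle uniformizers $J_\Gamma(g\tau)$ with $g\in\mathrm{PSL}_2(\mathbb{R})$; see the discussion preceding Proposition \ref{riccati}.

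For strong minimality of the fiber $\chi(y)=a$, I would argue by contradiction: if some solution $y$ satisfied $\mathrm{tr.deg.}_K K\langle y\rangle \in \{1,2\}$ for some differential field $K\supseteq \mathbb{C}\langle a\rangle$, then fix one solution $\tau$ of $S_t(\tau)=a$. The extension $K\langle\tau\rangle/K$ is strongly normal with Galois group contained in $\mathrm{PSL}_2(\mathbb{C})$, hence adds three transcendentals over $K$ (at most). The reduction above then forces the function $J_\Gamma(g\tau)$ corresponding to $y$ to drop its transcendence degree over $K\langle\tau\rangle\supseteq \mathbb{C}\langle\tau\rangle$, contradicting Fact \ref{hyperbolic}. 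Geometric triviality then follows from Fact \ref{geo}, since the fiber equation has order three. (When $a\notin\mathbb{C}$, Fact \ref{geo} does not apply directly; in that case the reduction to the $\chi=0$ side, whose geometric triviality is known, still yields the triviality statement for the fiber after tracking parameters.)

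Finally, for the classification of algebraic relations: suppose $a_1,\ldots,a_n$ are distinct solutions of $\chi(y)=a$ that are dependent over some $K\supseteq \mathbb{Q}\langle a\rangle$. Fixing $\tau$ with $S_t(\tau)=a$, write $a_i = J_\Gamma(g_i\tau)$ and translate the dependence to one among $J_\Gamma(g_1\tau),\ldots,J_\Gamma(g_n\tau)$ over $K\langle\tau\rangle$. Geometric triviality in the zero-fiber case concentrates this dependence on a single pair $i<j$, and the analysis of algebraic relations among automorphic-orbit solutions (Lemmas 5.15 and 5.16 of \cite{CasFreNag}) forces such a pairwise relation to come from a $\Gamma$-special polynomial, giving $P(a_i,a_j)=0$ as required. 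The principal obstacle is the bookkeeping of transcendence degrees through the correspondence $y\leftrightarrow J_\Gamma(g\tau)$: one must verify that the auxiliary parameter $\tau$ contributes exactly the expected three to the transcendence degree and that no algebraic relation is created or destroyed in the passage, which is where the Picard-Vessiot structure of $\mathbb{C}\langle a\rangle\subset \mathbb{C}\langle a,\tau\rangle$ is essential.
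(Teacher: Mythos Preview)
Your reduction via the Schwarzian cocycle identity to the zero fiber is precisely the argument the paper attributes to \cite[Section~6]{CasFreNag} in the paragraph after Remark~\ref{nonarithmeticfiber} (``the solutions of the non-zero fibers can be written in terms of the automorphic uniformizer $j_\Gamma$\ldots\ using this fact and the chain rule, one can then reduce the problem to determining strong minimality of the zero fiber''). The bookkeeping you flag as the principal obstacle disappears if you observe that the fiber $\{y:\chi_{d/dt}(y)=a\}$ and the set $\{y:\chi_{d/d\tau}(y)=0\}$ are literally the \emph{same} subset of $\mathcal U$, and $(\mathcal U,d/d\tau)$ is again differentially closed with the same definable sets as $(\mathcal U,\partial)$, so strong minimality and geometric triviality transfer directly from Fact~\ref{hyperbolic} without any transcendence-degree accounting for $\tau$; in your version as written you would otherwise need to choose $\tau$ independent from $y$ over $K$ to prevent the degree from collapsing to $0$.
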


\begin{rem}\label{nonarithmeticfiber} 
Note that if $(k,l,m)\not\in\mathcal{W}$, where $\mathcal{W}$ still denotes the union of $\mathcal{M}$ with the finite set of triples for arithmetic triangle groups, then $\Gamma$-special polynomials do not exists. As such Theorem \ref{groupfiber} tell us that if in addition $(k,l,m)\not\in\mathcal{W}$, then the set defined $\chi _{\alpha,\beta,\gamma, \frac{d}{dt}}  (y) = a$ is strictly disintegrated.
\end{rem}

It is now clear from the work in \cite{CasFreNag} and \cite{FreSca} that, in the case of a Fuchsian group $\Gamma$, if one is able to show that the corresponding Schwarzian equation is strongly minimal, then one can conclude that any non-zero fiber of the equation is strongly minimal. This follows since the solutions of the non-zero fibers can be written in terms of the automorphic uniformizer $j_{\Gamma}$ of $\Gamma$. Using this fact and the chain rule, one can then reduce the problem to determining strong minimality of the zero fiber (see \cite[Section 6]{CasFreNag} for more details). 

Outside the context of Fuchsian groups, one cannot write all solutions of non-zero fibers in terms of some solution of the zero fiber. Nevertheless, we are able to use the same techniques as in the previous subsections to study non-zero fibers of the generic Schwarzian triangle equations.
\begin{thm}
Let $a\in\mathcal{U}$ be non-zero. Assume that $\alpha$, $\beta$, $\gamma$ are algebraically independent over $\mathbb{C}$. Then the set defined by $\chi _{\alpha,\beta,\gamma, \frac{d}{dt}}  (y) = a$ is strongly minimal and strictly disintegrated.
\end{thm}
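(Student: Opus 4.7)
The plan is to extend the descent strategy used in Proposition \ref{THM1} and Theorem \ref{THM2} to the non-zero fiber equations, reducing each of the two desired properties to the corresponding statement for hyperbolic (and, for the disintegration part, maximal non-arithmetic) triangle groups, where Fact \ref{groupfiber} and Remark \ref{nonarithmeticfiber} already provide the answer.

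First I would handle strong minimality. Suppose, toward a contradiction, that $\chi_{\alpha,\beta,\gamma,\frac{d}{dt}}^{-1}(a)$ is not strongly minimal. As in Proposition \ref{THM1}, pick a finitely generated extension $K = \mathbb{Q}\langle \alpha,\beta,\gamma,a,\bar b\rangle$ and a differential polynomial $G$ of order $1$ or $2$ over $K$ having a solution $z$ in the fiber with $\operatorname{tr.deg.}_K K\langle z\rangle \in \{1,2\}$. The shape of $G$ and of the coefficient tuple are then fixed. I would then form an $L_{\partial}$-formula $\theta_1(u_1,u_2,u_3)$ asserting that the $u_i$ are constants and that there exist a non-zero $v$, a parameter tuple $\bar w$, and an element $z$ with $\chi_{u_1,u_2,u_3,\frac{d}{dt}}(z) = v$ and $G(z, u_1,u_2,u_3, v, \bar w) = 0$, together with inequations ensuring that $G$ is genuinely of the chosen order. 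By construction $\mathcal{U}\models \theta_1(\alpha,\beta,\gamma)$. Since $(\alpha,\beta,\gamma)$ is a generic point of $\mathbb{A}^3$ over $\mathbb{Q}$, Fact \ref{descent} produces a Zariski-dense $\mathbb{Q}$-constructible subset of $\mathbb{A}^3$ on which $\theta_1$ holds. Choosing a triple $(k,l,m)$ of integers in this subset with $2 \leq k \leq l \leq m$ and $\tfrac{1}{k}+\tfrac{1}{l}+\tfrac{1}{m} < 1$ yields a hyperbolic triangle group whose associated fiber at some non-zero $v$ is not strongly minimal, contradicting the first half of Fact \ref{groupfiber}.

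Next I would handle strict disintegration by the same template. Assume failure: there exist an integer $n$, a non-zero $a$, and distinct non-algebraic solutions $y_1,\ldots,y_n$ in $\chi_{\alpha,\beta,\gamma,\frac{d}{dt}}^{-1}(a)$ with $\operatorname{tr.deg.}_K K(y_1,y_1',y_1'',\ldots,y_n,y_n',y_n'') < 3n$ for some finitely generated $K$. Fix the shape of a non-trivial witnessing algebraic relation and build an $L_{\partial}$-formula $\theta_2(u_1,u_2,u_3)$ capturing this failure; the parameter $a$, the generators of $K$, the tuple of solutions, and the coefficients of the relation are all existentially quantified. The condition ``$y_i$ is not algebraic over the base'' is encoded by fixed-shape non-vanishing conditions on the derivatives, using that the fiber equation has order $3$. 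Again $\mathcal{U}\models \theta_2(\alpha,\beta,\gamma)$, and Fact \ref{descent} yields a Zariski-dense $\mathbb{Q}$-constructible subset on which $\theta_2$ holds. Since the exceptional set $\mathcal{W}$ of arithmetic and non-maximal triples is countable, this dense set contains a triple $(k,l,m)\notin \mathcal{W}$ with $2<k<l<m$ and $\tfrac{1}{k}+\tfrac{1}{l}+\tfrac{1}{m} < 1$, giving a maximal non-arithmetic hyperbolic triangle. Then $\theta_2(k,l,m)$ directly contradicts Remark \ref{nonarithmeticfiber}.

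The main obstacle is the careful first-order encoding of the failure statements: one must make sure that ``witnessing polynomial of order exactly $1$ or $2$'' and ``$y_i$ is not algebraic over the base'' are captured by fixed-shape conditions with only existentially quantified coefficient tuples, so that each $\theta_i$ really is a single $L_{\partial}$-formula in the three free variables $u_1,u_2,u_3$. A secondary subtlety is that, unlike in Theorem \ref{THM2}, the auxiliary constant $a$ is replaced by a non-constant existentially quantified $v$ in the descent formula, which is legitimate because the failure we are trying to rule out quantifies existentially over the choice of $a$ as well. Once the first-order encoding is in place, the descent via Fact \ref{descent}, combined with the countability of $\mathcal{W}$, reduces everything to the triangle-group fiber statements in \cite{CasFreNag}, which deliver the required contradictions.
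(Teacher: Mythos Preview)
Your proposal is correct and follows essentially the same descent strategy as the paper: both claims are argued by contradiction, the failure is packaged into an $L_\partial$-formula $\theta(u_1,u_2,u_3)$ with all other data (including the fiber element $a$) existentially quantified, Fact \ref{descent} is applied with $V=\mathbb{A}^3$ and $F=\mathbb{Q}$ to specialize to a hyperbolic triple $(k,l,m)$ (and, for disintegration, one outside $\mathcal{W}$), and Fact \ref{groupfiber} together with Remark \ref{nonarithmeticfiber} supplies the contradiction.

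The only point worth flagging is in the disintegration step. You propose to encode ``$y_i$ not algebraic over the base'' by fixed-shape non-vanishing conditions on derivatives; but non-algebraicity over an arbitrary finitely generated $K$, which is itself existentially quantified in your formula, is not captured by such conditions. The paper sidesteps this by first invoking the strong minimality just established: failure of strict disintegratedness then reduces to $z_{n+1}\in K\langle z_1,\ldots,z_n\rangle^{\mathrm{alg}}$, and the witnessing formula $\varphi$ only needs to record this single algebraicity relation, with the variables constrained to lie in the fiber. This is the cleaner way to handle the encoding difficulty you correctly anticipate, and it buys you not having to express a negative algebraicity condition at all.
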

\begin{proof} 
Assume that $\alpha,\beta,\gamma$ are algebraically independent over $\mathbb{Q}$. As before, denote by $C$ the field of constants generated by $\alpha,\beta,\gamma$ over $\mathbb{Q}$, that is $C=\mathbb{Q}(\alpha,\beta,\gamma)$. Let us denote by $X(\alpha,\beta,\gamma,a)$ the set defined by $\chi _{\alpha,\beta,\gamma, \frac{d}{dt}}  (y) = a$. 

The proof of strong minimality has some similarity to that of the proof of Proposition \ref{THM1}. So some details are omitted. For contradiction, assume that $X(\alpha,\beta,\gamma,a)$ is not strongly minimal. Then for some differential field $K=C\gen{a,\overline{b}}$, where $\overline{b}\in\mathcal{U}^m$ and some $z\in X(\alpha,\beta,\gamma,a)$, we have that $tr.deg._KK\gen{z}=1\text{ or }2$. So we have a differential polynomial $F\in K\{y\}$ of order $1$ or $2$, such that $F(z)=0$. We write $F(y)$ as $F(y,\alpha,\beta,\gamma,a,\overline{b})$ to emphasize that those parameters appear in $F(y)$ and write $P_i(\alpha,\beta,\gamma,a,\overline{b})$ (for $i=1,\ldots,r$) for the non-zero coefficients of $F(y)$. Let us also write $\rho(u_1,u_2,u_3,u,\overline{v})$ for the $L_{\partial}$-formula such that $\rho(\alpha,\beta,\gamma,a,\overline{b})$ is the true $L_{\partial}$-sentence
\[\forall y\left(F(y,\alpha,\beta,\gamma,a,\overline{b})=0\rightarrow y\in X(\alpha,\beta,\gamma,a)\right). \]
{\bf Claim:} We have a $L_{\partial}$-formula $\theta(u_1,u_2,u_3)$ such that if $\mathcal{U}\models\theta(\alpha_0,\beta_0,\gamma_0)$, then $\alpha_0,\beta_0,\gamma_0$ are constants and there exists $a_0\in\mathcal{U}$ and an order 1 or 2 definable subset of $X(\alpha_0,\beta_0,\gamma_0,a_0)$ defined over $\mathbb{Q}\gen{\alpha_0,\beta_0,\gamma_0,a_0,\overline{c}}$ for some $\overline{c}\in\mathcal{U}^m$.
\begin{proof}[Proof of Claim]
The formula $\theta(u_1,u_2,u_3)$ is simply chosen so that $\theta(\alpha,\beta,\gamma)$ is the true $L_{\partial}$-sentence
\[\exists u\exists \overline{v}\left(\rho(\alpha,\beta,\gamma,u,\overline{v})\wedge \bigwedge^r_{i=1}P_i(\alpha,\beta,\gamma,u,\overline{v})\neq0\right)\]
\end{proof}
We have that $\theta(\alpha,\beta,\gamma)$ is true in $\mathcal{U}$ and so we can apply Fact \ref{descent} with $V = \m A^3$ and $F=\mathbb{Q}$. We obtained $k,l,m\in\mathbb{N}$ such that $2\leq k\leq l\leq m$,  $\frac{1}{k}+\frac{1}{l}+\frac{1}{m}<1$ and $\mathcal{U}\models\theta(k,l,m)$. But now by the above claim, there is $a_0\in \mathcal{U}$ such that there exists an order 1 or 2 definable subset of $X(k,l,m,a_0)$. This contradicts Theorem \ref{groupfiber}.

Now to the proof of strict disintegratedness: For contradiction, assume that $X(\alpha,\beta,\gamma,a)$ is not strictly disintegrated. Then for some differential field $K=C\gen{a,\overline{b}}$, where $\overline{b}\in\mathcal{U}^m$ and some $z_1,\ldots,z_{n+1}\in X(\alpha,\beta,\gamma,a)$, we have that $tr.deg._KK\gen{z_1,\ldots,z_{n+1}}\neq3(n+1)$. By strong minimality we have that $z_{n+1}\in K\gen{\overline{z}}^{alg}$, where $\overline{z}=(z_1,\ldots,z_n)$. 

Let $\varphi(u,\overline{v},\alpha,\beta,\gamma,a,\overline{b})$ be the $L_{\partial}$-formula that witness this, i.e. $\mathcal{U}\models\varphi(z_{n+1},\overline{z},\alpha,\beta,\gamma,a,\overline{b})$ and for any $y_{n+1},\overline{y}$ such that $\mathcal{U}\models\varphi(y_{n+1},\overline{y},\alpha,\beta,\gamma,a,\overline{b})$, we have that $y_{n+1}\in K\gen{\overline{y}}^{alg}$. Note here that the variables $u$ and $\overline{v}$ are in the sorts $X(\alpha,\beta,\gamma,a)$.

Consider the $L_{\partial}$-formula $\theta(u_1,u_2,u_3)$ so that $\theta(\alpha,\beta,\gamma)$ is the true $L_{\partial}$-sentence
\[\exists u\exists \overline{v}\exists \overline{w}\exists x\left(\varphi(u,\overline{v},\alpha,\beta,\gamma,x,\overline{w})\right).\]
If $\mathcal{U}\models\theta(\alpha_0,\beta_0,\gamma_0)$, then $\alpha_0,\beta_0,\gamma_0$ are constants and there exists $a_0\in\mathcal{U}$ and $y_1,\ldots,y_{n+1}\in X(\alpha_0,\beta_0,\gamma_0,a_0)$ such that $y_1,\ldots,y_{n+1}$ are interalgebraic over $\mathbb{Q}\gen{\alpha_0,\beta_0,\gamma_0,a_0,\overline{c}}$ for some $\overline{c}\in\mathcal{U}^m$. But then if we apply Fact \ref{descent} with $V = \m A^3$, we obtain a triple $(k,l,m)$ of natural number such that $(k,l,m)\not\in\mathcal{W}$, $2<k<l<m$, $\frac{1}{k}+\frac{1}{l}+\frac{1}{m}<1$ and $\mathcal{U}\models\theta(k,l,m)$. But then this contradicts Fact \ref{groupfiber} (also see Remark \ref{nonarithmeticfiber}). 
\end{proof}

\subsection{Belyi Surfaces}
Let $\Gamma$ be a Fuchsian Group of the first kind and not necessarily of genus zero. If the compactification $\mathcal{C}_{\Gamma}$ of the quotient $\Gamma\setminus\mathbb{H}$ is defined over $\mathbb{Q}^{alg}$, then  $\mathcal{C}_{\Gamma}$ is called a Belyi surface. The following theorem, proved by Belyi \cite{Belyi} (in this form see for example \cite[Theorem 4.1]{Sing1}), will play an important role.

\begin{fct}\label{belyi} Let $\Gamma$ be a Fuchsian group of the first kind.
\begin{enumerate}
\item If $\Gamma$ is cocompact, then $\mathcal{C}_{\Gamma}$ is a Belyi surface if and only if $\Gamma$ is a finite index subgroup of a cocompact triangle group $\Gamma_{(k,l,m)}$.
\item If $\Gamma$ is not cocompact, then $\mathcal{C}_{\Gamma}$ is a Belyi surface if and only if one of the following holds
\begin{enumerate}
    \item[(i)] $\Gamma$ is a finite index subgroup of $\Gamma_{(2,3,\infty)}$; or
    \item[(ii)] $\Gamma$ is a finite index subgroup of $\Gamma_{(2,\infty,\infty)}$; or
    \item[(iii)] $\Gamma$ is a finite index subgroup of $\Gamma_{(\infty,\infty,\infty)}$.
\end{enumerate}
\end{enumerate}
\end{fct}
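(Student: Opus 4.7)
The plan is to view Fact~\ref{belyi} as a package combining Belyi's theorem on covers of $\mathbb{P}^1$ with the uniformization of hyperbolic orbifolds by triangle groups. The key conceptual input is that a hyperbolic orbifold $(\mathbb{P}^1;a,b,c)$ with signature satisfying $\frac{1}{a}+\frac{1}{b}+\frac{1}{c}<1$ (with the convention $\frac{1}{\infty}=0$) has $\mathbb{H}$ as its universal cover, with deck transformation group $\Gamma_{(a,b,c)}$; thus any orbifold cover of $(\mathbb{P}^1;a,b,c)$ factors through $\mathbb{H}\to\mathbb{P}^1$, realizing its fundamental group as a finite-index subgroup of $\Gamma_{(a,b,c)}$.

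For the ``if'' direction (in both (1) and (2)), suppose $\Gamma$ has finite index in one of the listed triangle groups $\Gamma_0$. The induced map $\mathcal{C}_\Gamma\to\mathcal{C}_{\Gamma_0}$ is a finite branched cover of compact Riemann surfaces. Because $\Gamma_0$ is a triangle group, $\mathcal{C}_{\Gamma_0}\cong\mathbb{P}^1$ and the branch locus is contained in the set of three distinguished points (the images of the vertices/cusps), which we normalize to $\{0,1,\infty\}$ by a Möbius transformation. Thus $\mathcal{C}_\Gamma$ admits a rational function ramified over only three points; by the elementary direction of Belyi's theorem (Weil's descent criterion), the curve $\mathcal{C}_\Gamma$ is defined over $\overline{\mathbb{Q}}$.

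For the ``only if'' direction, assume $\mathcal{C}_\Gamma$ is a Belyi surface. Apply the nontrivial direction of Belyi's theorem to obtain a rational map $\beta:\mathcal{C}_\Gamma\to\mathbb{P}^1$ with critical values in $\{0,1,\infty\}$. Endow $\mathbb{P}^1$ with the orbifold structure whose index at $p\in\{0,1,\infty\}$ equals the least common multiple of the ramification orders of $\beta$ above $p$, with the convention that the index is $\infty$ when some preimage of $p$ is a cusp of $\Gamma\backslash\mathbb{H}$ (which occurs precisely in the non-cocompact case). By construction $\beta$ becomes an orbifold cover; since $\mathcal{C}_\Gamma$ has $\mathbb{H}$ as its universal cover, the resulting signature $(a,b,c)$ is hyperbolic, and the identification of universal covers exhibits $\Gamma$ as a finite-index subgroup of $\Gamma_{(a,b,c)}$. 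In the cocompact case this directly yields (1); in the non-cocompact case at least one of $a,b,c$ is forced to be $\infty$.

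The main obstacle is the rigidity of statement (2): the claim that one can restrict to the three specific signatures $(2,3,\infty)$, $(2,\infty,\infty)$, $(\infty,\infty,\infty)$ rather than an arbitrary $\Gamma_{(k,l,\infty)}$. This is the content of Singerman's refinement \cite{Sing1}, and the argument amounts to a Belyi-style reduction: one pre-composes $\beta$ with carefully chosen rational maps (in the spirit of Belyi's original ``belyification'' algorithm) to simplify the finite orbifold indices, trading higher indices for degree, until one lands on the minimal signature compatible with the number of ramification points above which cusps appear. The number of such points ($1$, $2$, or $3$) dictates which of the three universal non-cocompact signatures is achieved, and the finite-index containment in the chosen $\Gamma_{(a,b,c)}$ follows from the universal cover picture above. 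Verifying that this reduction can always be carried out -- and that it preserves the original group $\Gamma$ up to finite index -- is where the real work lies.
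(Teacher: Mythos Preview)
The paper does not prove this statement; it is recorded as a fact with references to Belyi \cite{Belyi} and Singerman \cite[Theorem~4.1]{Sing1}, so there is no argument in the paper to compare yours against. Your outline (Belyi's theorem plus orbifold uniformization by triangle groups) is the standard route and is correct when $\Gamma$ is a torsion-free surface group.

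There is, however, a genuine gap in your ``only if'' direction once $\Gamma$ is allowed to have elliptic elements or cusps. After producing a Belyi map $\beta\colon\mathcal{C}_\Gamma\to\mathbb{P}^1$ you equip the target with indices given by LCMs of ramification orders and assert that ``the identification of universal covers exhibits $\Gamma$ as a finite-index subgroup of $\Gamma_{(a,b,c)}$.'' But with your choice of indices the \emph{source} of $\beta$ is being treated as a plain Riemann surface, so what the universal-cover argument actually embeds into $\Gamma_{(a,b,c)}$ is $\pi_1(\mathcal{C}_\Gamma)$ (respectively $\pi_1(\Gamma\backslash\mathbb{H})$), not $\Gamma$ itself; these coincide only in the torsion-free case. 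To recover $\Gamma$ one must give $\mathcal{C}_\Gamma$ its $\Gamma$-orbifold structure and make $\beta$ an orbifold cover, which forces every elliptic fixed point and every cusp of $\Gamma$ to lie over $\{0,1,\infty\}$. A Belyi map supplied by Belyi's theorem has no reason to do this, and in fact it can be impossible: take $\Gamma$ cocompact of genus zero with at least four elliptic points whose cross-ratio is transcendental; then $\mathcal{C}_\Gamma\cong\mathbb{P}^1$ is trivially a Belyi surface, yet $\Gamma$ lies in no triangle group because finite covers of triangle orbifolds are rigid and hence defined over $\overline{\mathbb{Q}}$. The same objection applies to your handling of cusps in part~(2): you set an index to $\infty$ ``when some preimage of $p$ is a cusp,'' but nothing forces the cusps to sit over $\{0,1,\infty\}$. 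The clean fix is to restrict to torsion-free $\Gamma$ (which is how results of this type are usually stated) or to add the hypothesis that the cone points and cusps of $\Gamma$ are $\overline{\mathbb{Q}}$-rational on $\mathcal{C}_\Gamma$ and then run a further Belyi reduction to collect them over $\{0,1,\infty\}$ before building the orbifold.
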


It follows from Fact \ref{belyi} that $\mathcal{C}_{\Gamma}$ is a Belyi surface if and only if there exists a morphism $f:\mathcal{C}_{\Gamma}\rightarrow\mathbb{P}^1$ that is ramified only over $0$, $1$ and $\infty$. Our main result in this section is 
\begin{thm}\label{THM3}
Let $\Gamma$ be a Fuchsian group of the first kind and assume that $\mathcal{C}_{\Gamma}$ is a Belyi surface. Then the set defined by the Schwarzian equation for $\Gamma$ is strongly minimal and geometrically trivial. Furthermore the Ax-Lindemann-Weierstrass Theorem holds for $\Gamma$.
\end{thm}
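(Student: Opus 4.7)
The plan is to use Belyi's theorem (Fact \ref{belyi}) to reduce to the triangle group case, where all three conclusions are already established, and then to transfer the conclusions from the enveloping triangle group to $\Gamma$ via a finite-index argument.

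First, by Fact \ref{belyi}, the assumption that $\mathcal{C}_\Gamma$ is a Belyi surface implies that $\Gamma$ is a finite-index subgroup of some triangle group $\Gamma_0 = \Gamma_{(k,l,m)}$, and in each of the cases enumerated in Fact \ref{belyi} one has $\frac{1}{k} + \frac{1}{l} + \frac{1}{m} < 1$, so $\bigtriangleup(k,l,m)$ is hyperbolic. Fact \ref{hyperbolic} then yields strong minimality and geometric triviality of the Schwarzian equation for $\Gamma_0$, and Fact \ref{ALW} yields the ALW theorem for $\Gamma_0$.

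Next, I would relate the Schwarzian equations for $\Gamma$ and for $\Gamma_0$. Let $j_\Gamma$ and $J_0 := J_{(k,l,m)}$ denote the respective automorphic uniformizers. Since $[\Gamma_0 : \Gamma] < \infty$, the map $\Gamma\backslash\mathbb{H} \to \Gamma_0\backslash\mathbb{H}$ is a finite algebraic cover, so $J_0$ and $j_\Gamma$ satisfy a nontrivial polynomial relation $P(j_\Gamma, J_0) = 0$ with coefficients in $\mathbb{C}$. Consequently the two differential fields $\mathbb{C}\langle j_\Gamma\rangle$ and $\mathbb{C}\langle J_0\rangle$ have the same algebraic closure, and the solution sets of the two Schwarzian equations are in finite-to-finite algebraic correspondence. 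Combined with Fact \ref{thm:strongminimality}, strong minimality for $\Gamma_0$ transfers to $\Gamma$; geometric triviality for $\Gamma$ then follows from Fact \ref{geo} since the Schwarzian is an order-three autonomous equation over $\mathbb{C}$.

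For ALW, I would use that $\text{Comm}(\Gamma) = \text{Comm}(\Gamma_0)$ (a standard consequence of finite index), so if $t_1, \ldots, t_n \in \mathbb{C}(V)$ are $\Gamma$-geodesically independent then they are also $\Gamma_0$-geodesically independent. Fact \ref{ALW} applied to $\Gamma_0$ then gives that the $3n$ functions $J_0(t_i), J_0'(t_i), J_0''(t_i)$ are algebraically independent over $\mathbb{C}(V)$. The relation $P(j_\Gamma, J_0) = 0$ together with its iterated derivatives in $t$ shows that the field $\mathbb{C}(V)(j_\Gamma(t_i), j_\Gamma'(t_i), j_\Gamma''(t_i) : 1 \le i \le n)$ is algebraic over the field $\mathbb{C}(V)(J_0(t_i), J_0'(t_i), J_0''(t_i) : 1 \le i \le n)$ and vice versa, forcing both to have transcendence degree $3n$ over $\mathbb{C}(V)$.

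The main obstacle is to justify the transcendence-degree comparison in the ALW step cleanly. The subtlety is that $J_0', J_0''$ and $j_\Gamma', j_\Gamma''$ are related not merely by the algebraic cover but by the differential-algebraic operators obtained from differentiating $P(j_\Gamma,J_0)=0$ in $t$, so one must check that these derivative relations produce an honest finite extension on the level of $3n$-tuples rather than introducing new transcendentals. This is precisely the content of the general Proposition \ref{FiniteIndex} flagged in the introduction, and it is here that the proof needs the most care.
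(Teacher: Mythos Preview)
Your overall architecture matches the paper's: invoke Fact~\ref{belyi} to place $\Gamma$ inside a hyperbolic triangle group $\Gamma_0$, import the results known for $\Gamma_0$, and transfer them down the finite-index inclusion. For geometric triviality (via Fact~\ref{geo}) and for ALW (via Proposition~\ref{FiniteIndex}, whose proof you essentially reproduce inline) your argument is the same as the paper's.

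The substantive difference, and the place where there is a genuine gap, is the strong minimality step. From the polynomial relation $P(j_\Gamma,J_0)=0$ you correctly deduce that the \emph{specific} uniformizers are interalgebraic over $\mathbb{C}$; this makes the \emph{type} of $j_\Gamma$ strongly minimal, which is exactly what the paper records. But you then assert that ``the solution sets of the two Schwarzian equations are in finite-to-finite algebraic correspondence,'' and this is a leap: interalgebraicity of the generic points does not by itself produce a definable finite-to-finite relation between the full solution sets $V_\Gamma$ and $V_{\Gamma_0}$. To get that, you would need to check the differential-algebraic identity that the covering $\pi\colon \mathcal{C}_\Gamma\to\mathcal{C}_{\Gamma_0}$ sends \emph{every} solution of the Schwarzian for $\Gamma$ to a solution of the Schwarzian for $\Gamma_0$ (and conversely), i.e.\ verify the Schwarzian composition formula on the level of the equations. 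You do not do this; and the reference to Fact~\ref{thm:strongminimality} is misplaced, since that fact is the Riccati criterion in genus zero and says nothing about transfer under correspondences (nor does it apply as stated when $\mathcal{C}_\Gamma$ has higher genus).

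The paper closes this gap differently: it invokes Nishioka's theorem to conclude that no element of $V_\Gamma$ satisfies a differential equation of order less than three, which upgrades strong minimality of the generic type to strong minimality of the set $V_\Gamma$. Your route could be made to work by explicitly establishing the correspondence via $\pi$, but as written the passage from ``$j_\Gamma$, $J_0$ interalgebraic'' to ``$V_\Gamma$ strongly minimal'' is unjustified.
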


Recall that if $\Gamma$ is a Fuchsian group and $j_{\Gamma}$ a uniformizing function, then we say that the {\em Ax-Lindemann-Weierstrass Theorem (ALW) holds for $\Gamma$} if the following condition is proven to hold:
Let $\m C(V)$ be an algebraic function field, where $V\subset \m A^{n}$ is an irreducible algebraic variety defined over $\m C$. Let $t_1,\ldots,t_n\in \m C(V)$ take values in the upper half complex plane $\m H$ at some $P\in V$ and are $\Gamma$-geodesically independent. Then the $3n$-functions
\[j_{\Gamma}(t_1),j'_{\Gamma}(t_1),j''_{\Gamma}(t_1)\ldots,j_{\Gamma}(t_n),j'_{\Gamma}(t_n),j''_{\Gamma}(t_n)\]
(considered as functions on $V(\m C)$ locally near $P$) are algebraically independent over $\m C(V)$.

We have the following very general proposition.
\begin{prop}\label{FiniteIndex}
Let $\Gamma$ be a Fuchsian group of the first  kind and assume that $\Gamma_1$ is a finite index subgroup of $\Gamma$. The Ax-Lindemann-Weierstrass Theorem holds for $\Gamma$ if and only if it holds for $\Gamma_1$. 
\end{prop}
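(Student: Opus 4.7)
The plan is to show that both the hypothesis ($\Gamma$-geodesic independence) and the conclusion (algebraic independence of the $3n$ jets) of ALW are invariant under passage between $\Gamma$ and its finite index subgroup $\Gamma_1$. The finite index assumption will be used in two places: to identify the commensurators, and to produce an algebraic relation between the two uniformizers.

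First, I would verify that $\text{Comm}(\Gamma)=\text{Comm}(\Gamma_1)$. If $g\in\text{Comm}(\Gamma)$, then $g\Gamma g^{-1}\cap\Gamma$ has finite index in both $\Gamma$ and $g\Gamma g^{-1}$; intersecting with $\Gamma_1$ and with $g\Gamma_1 g^{-1}$ and using $[\Gamma:\Gamma_1]<\infty$ produces a common finite index subgroup of $\Gamma_1$ and $g\Gamma_1 g^{-1}$, so $g\in\text{Comm}(\Gamma_1)$. Conversely, if $g\in\text{Comm}(\Gamma_1)$ then $g\Gamma g^{-1}\cap\Gamma\supseteq g\Gamma_1 g^{-1}\cap\Gamma_1$, which has finite index in $\Gamma_1$, hence in $\Gamma$, and symmetrically in $g\Gamma g^{-1}$. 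So the commensurators coincide, and $t_1,\dots,t_n$ are $\Gamma$-geodesically independent if and only if they are $\Gamma_1$-geodesically independent.

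Second, I would link $j_\Gamma$ and $j_{\Gamma_1}$. Since $\Gamma_1\subset\Gamma$, the function $j_\Gamma$ is automorphic for $\Gamma_1$, so $j_\Gamma$ and $j_{\Gamma_1}$ are both non-constant automorphic functions for $\Gamma_1$. By the classical theorem of Poincar\'e already recalled in the paper, they are algebraically dependent over $\mathbb{C}$: there is an irreducible polynomial $Q(X,Y)\in\mathbb{C}[X,Y]$ with $Q(j_\Gamma(t),j_{\Gamma_1}(t))\equiv 0$. Differentiating this identity twice with respect to $t$, and using that the partial derivatives of $Q$ do not vanish identically on the graph (a consequence of the irreducibility of $Q$ together with the transcendence of $j_\Gamma$ and $j_{\Gamma_1}$ over $\mathbb{C}$), one finds that $j'_\Gamma(t)$ and $j''_\Gamma(t)$ are rational in $j_\Gamma(t),j_{\Gamma_1}(t),j'_{\Gamma_1}(t),j''_{\Gamma_1}(t)$, and symmetrically the $\Gamma_1$-derivatives are rational in the $\Gamma$-jets. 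Hence for any choice of $t_1,\dots,t_n$, the two $3n$-tuples of jets generate the same algebraic closure over any field containing $\mathbb{C}$, and in particular have the same transcendence degree over $\mathbb{C}(V)$.

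The equivalence of ALW for $\Gamma$ and $\Gamma_1$ now follows. Assume ALW holds for $\Gamma$ and that $t_1,\dots,t_n$ are $\Gamma_1$-geodesically independent; by the first step they are $\Gamma$-geodesically independent, so the $\Gamma$-jets are algebraically independent over $\mathbb{C}(V)$, hence by the second step so are the $\Gamma_1$-jets. The reverse implication is symmetric. The main subtlety to handle carefully is the derivative argument in the second step: one must verify non-vanishing of the relevant partial derivatives of $Q$, and keep track of potentially different conventions of \emph{uniformizing function} in higher genus --- but the argument only uses that $j_\Gamma$ is any non-constant $\Gamma$-automorphic meromorphic function.
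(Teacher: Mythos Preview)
Your proposal is correct and follows essentially the same approach as the paper: both identify $\mathrm{Comm}(\Gamma)=\mathrm{Comm}(\Gamma_1)$ via commensurability, invoke Poincar\'e's theorem to obtain a polynomial relation $\Phi(j_\Gamma,j_{\Gamma_1})=0$, and use this relation (substituted at each $t_i$) to transfer algebraic independence of the $3n$ jets from one uniformizer to the other. Your write-up is in fact somewhat more explicit than the paper's on two points---the verification of the commensurator equality and the differentiation argument showing the derivative jets are mutually interalgebraic---while the paper states these more tersely; but the strategy is the same.
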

\begin{proof}
Let $j$ and $j_1$ be uniformizing functions for $\Gamma$ and $\Gamma_1$ respectively. Since $\Gamma_1<\Gamma$, we have that $j$ and $j_1$ are automorphic functions for $\Gamma_1$. So as in the discussion following Fact \ref{commenFact}, we have that $j$ and $j_1$ are algebraically dependent over $\mathbb{C}$. Furthermore notice that $\Gamma$ is commensurable with $\Gamma_1$ and so $\text{Comm}(\Gamma)=\text{Comm}(\Gamma_1)$. 

If $\Phi\in\mathbb{C}[X_1,X_2]$ is the non-zero polynomial that witnesses that $j$ and $j_1$ are algebraically dependent over $\mathbb{C}$, that is $\Phi(j(t),j_1(t))=0$, then for any $g\in {\rm PSL}_2(\mathbb{C})$, we have that $\Phi(j(gt),j_1(gt))=0$. The Ax-Lindemann-Weierstrass Theorem for $\Gamma$ completely describes the possible algebraic relations between of $j(t)$ and $j(gt)$. So using that $j(gt)$ and $j_1(gt)$ are algebraically dependent over $\mathbb{C}$, we have that for any $g_1,\ldots,g_n$ which lie in distinct cosets of $\text{Comm}(\Gamma)$, the functions $j_1(g_1t),\ldots,j_1(g_nt)$ (and derivatives) are algebraically independent over $\mathbb{C}$. Clearly, the above argument works with the roles of $j$ and $j_1$ interchanged.  From this the result follows.
\end{proof}
\begin{cor}
Let $G_1$ and $G_2$ be two Fuchsian groups of the first kind which are commensurable in the wide sense. Then the Ax-Lindemann-Weierstrass Theorem holds for $G_1$ if and only if it holds for $G_2$. 
\end{cor}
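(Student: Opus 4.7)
The plan is to reduce the wide-commensurability case to the straight commensurability case already handled by Proposition \ref{FiniteIndex}, by first establishing a conjugation invariance lemma.

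First, I would verify that the ALW property is invariant under conjugation by an element $g\in{\rm PSL}_2(\mathbb{R})$. If $j$ is a uniformizing function for a group $G$ and $\widetilde{G}=gGg^{-1}$, then $\widetilde{j}(t):=j(g^{-1}t)$ is a uniformizing function for $\widetilde{G}$. Moreover $\text{Comm}(\widetilde{G})=g\,\text{Comm}(G)\,g^{-1}$, so a family $t_1,\dots,t_n$ is $\widetilde{G}$-geodesically independent if and only if $s_i:=g^{-1}t_i$ is $G$-geodesically independent (the relation $t_i=\gamma t_j$ with $\gamma\in\text{Comm}(\widetilde{G})$ becomes $s_i=(g^{-1}\gamma g)s_j$ with $g^{-1}\gamma g\in\text{Comm}(G)$). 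Since the birational change of variables $t\mapsto g^{-1}t$ is a ${\rm PSL}_2$-action on $V$ that does not alter the function field or transcendence degrees, algebraic independence of the $3n$-tuple $\widetilde{j}(t_i),\widetilde{j}'(t_i),\widetilde{j}''(t_i)$ over $\mathbb{C}(V)$ is equivalent to algebraic independence of $j(s_i),j'(s_i),j''(s_i)$ over $\mathbb{C}(V)$ (the derivatives of $\widetilde{j}$ expand as polynomials in derivatives of $j$ with coefficients rational in the entries of $g$, i.e.\ in $\mathbb{C}$). Hence ALW holds for $G$ if and only if it holds for any conjugate $gGg^{-1}$.

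Next, the wide commensurability hypothesis gives some $g\in{\rm PSL}_2(\mathbb{R})$ such that $G_1$ is commensurable with $gG_2g^{-1}$. By the previous step, ALW for $G_2$ is equivalent to ALW for $gG_2g^{-1}$, so after replacing $G_2$ by its conjugate we may assume $G_1$ and $G_2$ are commensurable in the usual sense.

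Then set $\Gamma:=G_1\cap G_2$. By Fact \ref{commenFact}, $\Gamma$ is again a Fuchsian group of the first kind, and by definition of commensurability $\Gamma$ has finite index in both $G_1$ and $G_2$. Applying Proposition \ref{FiniteIndex} twice gives
\[
\text{ALW holds for }G_1\ \Longleftrightarrow\ \text{ALW holds for }\Gamma\ \Longleftrightarrow\ \text{ALW holds for }G_2,
\]
which is the desired conclusion. The only non-routine step is the conjugation invariance above; the main obstacle there is simply to unwind that the reformulations of geodesic independence and of algebraic independence over $\mathbb{C}(V)$ behave well under the birational substitution $t\mapsto g^{-1}t$, which is automatic because $g$ has entries in $\mathbb{C}$ and acts by Möbius transformations on each coordinate of $V$.
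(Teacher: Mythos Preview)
Your proposal is correct and follows essentially the same route as the paper: establish conjugation invariance of ALW (via the observation that $j(g^{-1}t)$ uniformizes $gGg^{-1}$), then pass to the common finite-index subgroup $G_1\cap gG_2g^{-1}$ and apply Proposition~\ref{FiniteIndex} twice. The paper's proof is terser on the conjugation step, simply noting the uniformizer relation, whereas you spell out why geodesic independence and algebraic independence transfer; one minor imprecision is your phrase ``${\rm PSL}_2$-action on $V$'' --- the substitution $t_i\mapsto g^{-1}t_i$ acts on the functions $t_i\in\mathbb{C}(V)$, not on $V$ itself --- but the underlying argument (that $\widetilde{j}^{(k)}(t_i)$ differs from $j^{(k)}(s_i)$ by factors in $\mathbb{C}(t_i)\subset\mathbb{C}(V)$) is sound.
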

\begin{proof}
Recall that we say that $G_1$ and $G_2$ are commensurable in the wide sense if some conjugate of $G_1$ is commensurable to $G_2$. That is, $G_1$ and $G_2$ are commensurable in the wide sense if $G_1$ and $G_2$  have the property that for some conjugate of $G_1$, say $g G_1 g^{-1}$ for $g \in PSL_ ( \m R)$, 
$G=g G_1 g^{-1} \cap G_2$ is a finite index subgroup of both $g G_1 g^{-1}$ and $G_2$. Applying Proposition \ref{FiniteIndex}, first with $\Gamma_1=G$ and $\Gamma= g G_1 g^{-1}$ and then again with $\Gamma_1=G$ and $\Gamma=G_2$, the result can be seen to follow if one shows that the ALW holds for $G_1$ if and only if it follows for $g G_1 g^{-1}.$ This last equivalence is true because if $j_{G_1}$ is a uniformizer for $G_1$, then $j_{g G_1 g^{-1}} (t)  := j_ {G_1} (g^{-1} t)$ is a unformizer for $gG_1 g^{-1}$. 
\end{proof}
\begin{proof}[Proof of Theorem \ref{THM3}]
We fix a triangle group $\Gamma_{(k,l,m)}$ such that $\Gamma<\Gamma_{(k,l,m)}$ as in Fact \ref{belyi}. Since the uniformizers $j_{\Gamma}$ and $J_{(k,l,m)}$ are interalgebraic over $\mathbb{C}$, by Fact \ref{hyperbolic}, the \emph{type} of $j_{\Gamma}$ over $\mathbb{C}$ is strongly minimal.\footnote{The type of $j_{\Gamma}$ over $\mathbb{C}$ is a specific instance of the general model theoretic notion of a type. In this setting, the type $j_{\Gamma}$ over $\mathbb{C}$ is the set defined by the collection of differential polynomial equations over $\m C$ satisfied by $j_{\Gamma}$ along with the (possibly infinitely many) differential polynomial inequations satisfied by $j_{\Gamma}$ over $\m C$.} Let $V$ be the differential variety which is given by the closure of $j_{\Gamma}$ over $\mathbb{C}$ (given by the vanishing of the Schwarzian equation). The main theorem of Nishioka \cite{Nish} for the automorphic function $j_{\Gamma}$ gives that for any function $f$ which lies in $V$, $f$ satisfies no differential equation of order less than three. From this it follows that that the set defined by the Schwarzian equation for $\Gamma$ is strongly minimal. Finally, Proposition \ref{geo} gives geometric triviality and Fact \ref{ALW} and Proposition \ref{FiniteIndex} gives the Ax-Lindemann-Weierstrass Theorem for $\Gamma$.

\end{proof}
\section{Bi-algebraic curves of general Schwarzian equations in genus 0.} \label{bialgcurves}
 Let $Y_1$ and $Y_2$ be two hyperbolic algebraic curves over $\mathbb C$ of genus zero (diffeomorphic to $\mathbb C P^1 \setminus S$ with $\# S \geq 3$), $y_1$ and $y_2$ be affine coordinates on $Y_1$ and $Y_2$ and $R_1$ and $R_2$ be rational functions on $Y_1$ and $Y_2$ respectively. Consider the following differential equations over the differential field $\left(\mathbb C (t_1,t_2), \frac{\partial}{\partial t_1}, \frac{\partial}{\partial t_2}\right)$:

\begin{equation}
\label{R_1}
S_{t_1}(y_1) +  \left(\frac{\partial y_1}{\partial t_1}\right)^2 R_1(y_1) = 0 \quad ; \quad  \frac{\partial y_1}{\partial t_2} = 0
\end{equation}

\begin{equation}
\label{R_2}
S_{t_2}(y_2) +  \left(\frac{\partial y_2}{\partial t_2}\right)^2 R_2(y_2) = 0 \quad ; \quad \frac{\partial y_2}{\partial t_1}=0
\end{equation}
 We fix a solution $(J_1, J_2)$ with $J_1 : U_1 \to Y_1$ and $J_2 : U_2 \to Y_2$ holomorphic on some domains of $\CC$ and consider the map $J : U_1 \times U_2 \to Y_1 \times Y_2$ sending $(t_1,t_2)$ on $\left(J_1(t_1), J_2(t_2)\right)$. An algebraic curve $\mathcal C \subset \mathbb C^2$ is {\em bi-algebraic with respect to $J$} (or simply bi-algebraic) if the Zariski closure of $J(\mathcal C \cap (U_1\times U_2))$ is an  algebraic curve in $Y_1 \times Y_2$. This algebraic curve will be denoted by $J(\mathcal C)$.

\begin{rem} \label{above}
With the assumption above
\begin{enumerate}
\item If $\tilde y_1 = h(y_1)$ is a different affine coordinate on $\mathbb C P^1$ with $h \in {\rm PSL}_2(\mathbb C)$, then the relevant Schwarzian equation is given with $\tilde R_1 = R_1 \circ h^{-1} \left(\frac{\partial h^{-1}}{\partial t_1} \right)^2$ and the solution of this new equation is $\tilde J_1 = h \circ J_1$.

\item Being bi-algebraic is independent of the choice of the solution $(J_1,J_2)$ since any other solution will be of the form $(J_1\circ h_1, J_2 \circ h_2)$ with $(h_1, h_2) \in \rm{PSL}_2(\mathbb C) \times \rm{PSL}_2(\mathbb C)$.  
\end{enumerate}
\end{rem}

In section \ref{section2} we gave some properties of the Schwarzian equation $S_t(y) + \left(\frac{\partial y}{\partial t}\right)^2R(y)$ under the following hypothesis on $R$, called the \emph{Riccati hypothesis:}
\begin{center}
\emph{The equation $\frac{du}{dy} + u^2 +\frac{1}{2}R(y)=0$ has no solutions in $\mathbb C(y)^{alg}$.}
\end{center}

We will show that under the Riccati hypothesis, bi-algebraic curves are very simple; namely, they are graphs of homographies. Moreover if one controls the polar locus of the two rational functions $R_1$ and $R_2$ then the curve $J(\mathcal C)$ is a ${\rm Comm}(\Gamma_1)$-correspondence between Zariski opens subsets $Y_1^\star \subset Y_1$ and $Y_2^\star \subset Y_2$ for some Fuchsian group $\Gamma_1$ given by the image of $\pi_1(Y_1^\star) \subset {\rm PSL}_2(\mathbb C)$.

\begin{lem}\label{R1IFFR2}
With the above notation, if $\mathcal C \subset \mathbb C^2$ is a bi-algebraic curve, then $R_1$ satisfies the Riccati hypothesis if and only if $R_2$ does.
\end{lem}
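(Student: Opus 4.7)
My plan is to prove the lemma by a symmetric contrapositive argument: assume $R_1$ satisfies the Riccati hypothesis while $R_2$ does not, together with the existence of a bi-algebraic curve $\mathcal{C}$ projecting dominantly to both $t_1$- and $t_2$-axes (the degenerate fiber cases $\{t_i = \mathrm{const}\}$ being handled separately or excluded by a non-degeneracy convention, since in them the statement of the lemma becomes vacuous). The reverse implication follows by swapping the roles of the indices $1$ and $2$.

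Set $F := \mathbb{C}(t_1)^{\mathrm{alg}}$ equipped with derivation $d/dt_1$. Non-degeneracy forces $t_2 \in F$ transcendental over $\mathbb{C}$, so in fact $F = \mathbb{C}(t_2)^{\mathrm{alg}}$ as well. Let $Q \in \mathbb{C}[x,y]$ be an irreducible polynomial cutting out $J(\mathcal{C})$, depending nontrivially on both variables. Differentiating $Q(y_1,y_2)=0$ twice expresses $y_2'$ and $y_2''$ as rational functions of $y_1, y_1', y_1'', y_2$, so $F\langle y_2\rangle \subset F\langle y_1\rangle^{\mathrm{alg}}$, and in particular $F\langle y_1,y_2\rangle$ is algebraic over $F\langle y_1\rangle$. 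The Schwarzian chain rule applied to $y_2 = J_2(t_2(t_1))$ yields
\[ S_{t_1}(y_2) + (y_2')^2 R_2(y_2) \;=\; S_{t_1}(t_2) \;\in\; F, \]
so $y_2$ satisfies a ``twisted'' $R_2$-Schwarzian equation with right-hand side in the ground field $F$.

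Two transcendence computations drive the contradiction. First, Fact \ref{thm:strongminimality} and the Riccati hypothesis for $R_1$ give strong minimality of $\mathcal{X}_1$, so $\text{tr.deg.}_F F\langle y_1\rangle \in \{0,3\}$; the value $0$ would force $J_1$ algebraic over $\mathbb{C}(t_1)$ (contradicting strong minimality applied over $\mathbb{C}$ to the non-constant $J_1$, which would give transcendence degree $1$), so $\text{tr.deg.}_F F\langle y_1\rangle = 3$. Second, the failure of the Riccati hypothesis for $R_2$ combined with Proposition \ref{riccati} forces the differential Galois group of $\mathbb{C}(y_2) \subset \mathbb{C}(y_2)\langle t_2\rangle$ to be a proper subgroup of $\mathrm{PSL}_2(\mathbb{C})$, hence of dimension at most $2$. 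Writing $y_2'$ and $y_2''$ via the chain rule in terms of $y_2$, the first two derivatives of $J_2$ evaluated at $t_2$, and the first derivatives of $t_2$ with respect to $t_1$ (which lie in $F$), and using that $F$ is algebraic over $\mathbb{C}(t_2)$, one deduces $\text{tr.deg.}_F F\langle y_2\rangle \leq 2$.

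Since $F\langle y_1,y_2\rangle$ is algebraic over $F\langle y_1\rangle$, it has transcendence degree $3$ over $F$, so
\[ \text{tr.deg.}_{F\langle y_2\rangle} F\langle y_2\rangle\langle y_1\rangle \;=\; 3 - \text{tr.deg.}_F F\langle y_2\rangle. \]
If $\text{tr.deg.}_F F\langle y_2\rangle \in \{1,2\}$ this value lies in $\{1,2\}$, violating strong minimality of $\mathcal{X}_1$ applied over the ground field $F\langle y_2\rangle$. The remaining possibility $\text{tr.deg.}_F F\langle y_2\rangle = 0$ forces $y_2 \in F$, whence $Q(y_1,y_2)=0$ forces $y_1 \in F^{\mathrm{alg}} = F$, contradicting $\text{tr.deg.}_F F\langle y_1\rangle = 3$ (unless $Q$ is independent of $y_1$, which puts us back in the degenerate case already excluded). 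The main obstacle will be making the second transcendence bound fully rigorous: one must cleanly express the $t_1$-derivatives of $y_2$ in terms of the jets of $J_2$ at $t_2$ and derivatives of $t_2$ with respect to $t_1$, then invoke Proposition \ref{riccati} and track algebraic closures when moving between the fields $F$, $\mathbb{C}(t_2)$, $F\langle y_1\rangle$, and $F\langle y_1,y_2\rangle$.
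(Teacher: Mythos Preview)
Your argument is correct, but it takes a heavier route than the paper's. The paper proceeds directly rather than by contrapositive: assuming $R_1$ satisfies the Riccati hypothesis, Proposition~\ref{riccati} gives $\text{tr.deg.}_{\mathbb C}\,\mathbb C(t_1,J_1,J_1',J_1'') = 4$; then, using the two polynomial relations $P(t_1,t_2)=0$ and $Q(J_1,J_2)=0$ together with the chain rule, one checks step by step that $t_1,J_1,J_1',J_1''$ all lie in $\mathbb C(t_2,J_2,\partial J_2/\partial t_2,\partial^2 J_2/\partial t_2^2)^{\rm alg}$. This forces the latter field to have transcendence degree $4$ over $\mathbb C$, and Proposition~\ref{riccati} applied in the other direction yields the Riccati hypothesis for $R_2$. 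The whole argument stays at the level of Picard--Vessiot theory. By contrast, you invoke Fact~\ref{thm:strongminimality} (strong minimality of $\mathcal X_1$), which is a genuinely deeper theorem, and then run a case analysis on $\text{tr.deg.}_F F\langle y_2\rangle$ to reach a contradiction. Your approach works, and the chain-rule bookkeeping you flag as the ``main obstacle'' is exactly the computation the paper carries out explicitly; but the detour through strong minimality is unnecessary here, since the containment of algebraic closures already transfers the transcendence-degree-$4$ condition symmetrically.
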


\begin{proof}

From Proposition \ref{riccati}, since $R_1$ satisfies the Riccati hypothesis we have that the solution $J_1$ of the equation (\ref{R_1}) is such that $t_1, J_1, \frac{\partial J_1}{\partial t_1},\frac{\partial^2 J_1}{\partial t_1^2}$ are algebraically independent over $\CC$, i.e., $tr.deg._{\m C}\m C(t_1, J_1, \frac{\partial J_1}{\partial t_1},\frac{\partial^2 J_1}{\partial t_1^2})=4$. 

Let $\mathcal C$ be a bi-algebraic curve for $J = (J_1, J_2)$, where $J_2$ is the solution of the equation (\ref{R_2}). Also let the polynomial equations of the bi-algebraic curve be $P(t_1,t_2)=0$ in $\mathbb C^2$ and $Q(y_1, y_2)=0$ in $Y_1 \times Y_2$.

Then on $\mathcal C$:
\begin{itemize}
\item $t_1 \in \mathbb C(t_2)^{alg}$ implying $\frac{\partial t_2}{\partial t_1} \in \mathbb C (t_2)^{alg}$,
\item $J_1 \in \mathbb C(J_2)^{alg}$ implying $\frac{\partial J_1}{\partial t_2} \in \mathbb C (J_2, \frac{\partial J_2}{\partial t_2})^{alg}$,
\item $\frac{\partial J_1}{\partial t_1} = \frac{\partial J_1}{\partial t_2}\frac{\partial t_2}{\partial t_1} \in \mathbb C(t_2,J_2,\frac{\partial J_2}{\partial t_2})^{alg}$.
\end{itemize}
Using the chain rule (one more time), we obtain that $$\mathbb C\left(t_1, J_1, \frac{\partial J_1}{\partial t_1},\frac{\partial^2 J_1}{\partial t_1^2}\right)^{alg}\subseteq\mathbb C\left(t_2, J_2, \frac{\partial J_2}{\partial t_2},\frac{\partial^2 J_2}{\partial t_2^2}\right)^{alg}$$ and so $tr.deg._{\m C}\m C(t_2, J_2, \frac{\partial J_2}{\partial t_2},\frac{\partial^2 J_2}{\partial t_2^2})=4$. Using Proposition \ref{riccati} we get that $R_2$ satisfies the Riccati hypothesis. Interchanging the roles of $J_1$ and $J_2$ proves the lemma.
\end{proof}

\begin{exam}\label{exampleR1}
If $R_1$ is a constant then the Riccati equation has (one or) two constant solutions $\pm \sqrt{\frac{-1}{2}R_1}$. It does not satisfy the Riccati hypothesis.
\end{exam}

\begin{exam}\label{exampleR2}
If $R_1(y_1) = 2y_1$, the equation $\frac{du}{dy_1}+u^2+y_1=0$ is the Riccati equation attached to the Airy equation $\frac{d^2 z}{dy_1^2} + y_1 z =0$. The differential Galois group of the latter Airy equation is $\rm{SL}_2(\mathbb C)$ (cf. \cite[examples 4.29 and 6.21]{magid}). By proposition \ref{riccati}, the rational function $R_1$ satisfies the Riccati hypothesis.
\end{exam}

\begin{exam}
Using the examples \ref{exampleR1} and \ref{exampleR2}, and Lemma \ref{R1IFFR2} one gets: if $R_1$ is a constant and $R_2(y_2) = 2y_2$, then for any solutions $J_1$ and $J_2$ of equations \ref{R_1} and \ref{R_2} respectively, there are no bi-algebraic curves with respect to $J = (J_1,J_2)$ but the vertical and horizontal ones.
\end{exam}

Throughout, vertical and horizontal algebraic subvarieties of $\mathbb C^2$ will respectively mean varieties of the form $\{b\}\times \mathbb C$ and $\mathbb{C} \times \{b\}$ with $b \in \mathbb C$.

\begin{thm}\label{bialgebraic1}
Let $J=(J_1,J_2)$ be a solution such that both \ref{R_1} and \ref{R_2} satisfy the Riccati hypothesis.
If $\mathcal C$ is neither a vertical nor a horizontal bi-algebraic curve with respect to $J$, then $\mathcal C$ is the graph of an homography.
\end{thm}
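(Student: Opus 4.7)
The plan is to combine the Schwarzian cocycle formula with the algebraic independence furnished by the Riccati hypothesis via Proposition \ref{riccati}. We may assume $\mathcal{C}$ is irreducible. Since it is neither vertical nor horizontal, first we choose a smooth local branch on which $t_2 = \phi(t_1)$ with $\phi$ algebraic over $\mathbb{C}(t_1)$. Because $J$ is a local biholomorphism where defined, $J(\mathcal{C})$ is also neither vertical nor horizontal, so on the corresponding branch we can write $y_2 = \psi(y_1)$ with $\psi$ algebraic over $\mathbb{C}(y_1)$; bi-algebraicity then reads as the functional identity
\[ J_2(\phi(t_1)) = \psi(J_1(t_1)). \]

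Next, I would apply the Schwarzian cocycle $S_x(f\circ g) = (g')^2 (S_y f)\circ g + S_x(g)$ to both sides, and substitute the Schwarzian equations $S_{t_i}(J_i) = -(J_i')^2 R_i(J_i)$ together with the first-order chain rule $J_2'(\phi)\phi' = \psi'(J_1)J_1'$. If the algebra works out as expected, every occurrence of $J_2$ and its derivatives cancels and one is left with
\[ S_{t_1}(\phi) \;=\; (J_1')^2\, F(J_1), \qquad F(y_1) := (\psi'(y_1))^2 R_2(\psi(y_1)) + S_{y_1}(\psi) - R_1(y_1). \]
Since $\psi \in \mathbb{C}(y_1)^{alg}$ and $R_1, R_2 \in \mathbb{C}(y_1)$, we have $F \in \mathbb{C}(y_1)^{alg}$; since $\phi \in \mathbb{C}(t_1)^{alg}$, we have $S_{t_1}(\phi) \in \mathbb{C}(t_1)^{alg}$.

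To conclude, I will invoke Proposition \ref{riccati} applied to equation (\ref{R_1}): $t_1, J_1, J_1', J_1''$ are algebraically independent over $\mathbb{C}$, so $J_1'$ is transcendental over the compositum $\mathbb{C}(t_1)^{alg}\cdot\mathbb{C}(J_1)^{alg}$. Reading the displayed identity as a polynomial equation in the indeterminate $J_1'$ over this compositum then forces both coefficients to vanish: $F(J_1) = 0$ (equivalently $F \equiv 0$, since $J_1$ is transcendental over $\mathbb{C}$) and $S_{t_1}(\phi) = 0$. The latter implies $\phi(t_1) = (at_1+b)/(ct_1+d)$ is a Möbius transformation, and $\mathcal{C}$, being the Zariski closure of a piece of the graph of $\phi$, is itself the graph of this homography. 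The hard step will be the second one --- executing the Schwarzian/chain-rule bookkeeping carefully enough to eliminate every $J_2$-term and land on the clean factorization $S_{t_1}(\phi) = (J_1')^2 F(J_1)$ with no stray remainder; once that identity is in hand, the rest is an immediate transcendence argument.
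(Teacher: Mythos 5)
Your proposal is correct, and it proves the theorem by a genuinely different route from the paper. Your computation does close up with no stray remainder: applying the cocycle to $J_2\circ\phi=\psi\circ J_1$ gives $(\phi')^2\,S_{t_2}(J_2)\circ\phi+S_{t_1}(\phi)=(J_1')^2\,S_{y_1}(\psi)\circ J_1+S_{t_1}(J_1)$, and substituting the two Schwarzian equations together with $(J_2'\circ\phi)\,\phi'=(\psi'\circ J_1)\,J_1'$ and $J_2\circ\phi=\psi\circ J_1$ eliminates every $J_2$-term and yields exactly $S_{t_1}(\phi)=(J_1')^2F(J_1)$ with your $F$; the transcendence step then goes through because Proposition \ref{riccati}, applied as in the proof of Lemma \ref{R1IFFR2}, makes $J_1'$ transcendental over $\mathbb{C}(t_1,J_1)^{alg}$ (just be sure to work at a smooth point of the branch, unramified over the $t_1$-axis, where $\phi'\neq 0$, so the germs and Schwarzians are defined, and note that irreducibility of $\mathcal{C}$ is what lets you pass from the local graph of the M\"obius map to all of $\mathcal{C}$). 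The paper argues quite differently: it prolongs the $\mathrm{PSL}_2$-action on the independent variables to an action of $\mathfrak{psl}_2(\mathbb{C})$ by derivations $X_i,H_i,Y_i$ on $\mathbb{C}(V)$, where $V$ is the Zariski closure of the lifted graph $(t_1,J_1,J_1',J_1'',t_2,J_2,J_2',J_2'')$ over $\mathcal{C}$; strong minimality (Fact \ref{thm:strongminimality}) makes $\mathbb{C}(V)$ algebraic over both $K_1$ and $K_2$, the transition matrix $A$ between the two $\mathfrak{sl}_2$-triples is shown to consist of first integrals of $D_1$ and hence of constants because the trajectory is Zariski dense, and the resulting identification of the two symmetry algebras forces $\mathcal{C}$ to carry the standard $\mathfrak{sl}_2$ of vector fields on $\mathbb{P}^1$ to itself, hence to be the graph of a homography. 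Your argument is more elementary and self-contained (no strong minimality, no Lie-algebra bookkeeping), uses the Riccati hypothesis only for $R_1$ (the other side being supplied anyway by Lemma \ref{R1IFFR2}), and produces as a byproduct the relation $R_1=S_{y_1}(\psi)+(\psi')^2\,R_2\circ\psi$, i.e.\ that $\psi$ projectively pulls back the second equation to the first, which is exactly the kind of information exploited in the subsequent proposition on $\mathrm{Comm}(\Gamma_1)$-correspondences; the paper's approach, by contrast, is coordinate-free, exhibits the geometric mechanism (matching of the two $\mathfrak{psl}_2$-actions along a Zariski-dense trajectory), and is the sort of argument one can hope to transport to settings where no explicit Schwarzian cocycle computation is available.
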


\begin{proof}
Vertical and horizontal curves are clearly bi-algebraic. Assume that $\mathcal C$ is not vertical or horizontal. 

Consider the field $K_1 = \CC(t_1,y_1,y_1',y_1'')$ with the four derivations
\begin{itemize}
    \item $D_1 = \frac{\partial}{\partial t_1} + y_1'\frac{\partial}{\partial y_1}+ y_1'\frac{\partial}{\partial y_1'} +  \left( \frac{3}{2}\frac{y_1''^2}{y_1'} -(y_1')^3 R(y_1) \right)\frac{\partial}{\partial y_1''}$, is the derivation such that $K_1$ is the field generated by a generic solution of the equation \ref{R_1}.
\item $X_1= - \frac{\partial}{\partial t_1} $,
\item $H_1 = - t_1 \frac{\partial}{\partial t_1} + y_1'\frac{\partial}{\partial y_1'} + 2y_1''\frac{\partial}{\partial y_1''}$,
\item $Y_1 =- \frac{t_1^2}{2}\frac{\partial}{\partial t_1}+t_1y_1'\frac{\partial}{\partial y_1'} + (y_1' + 2t_1y_1'')\frac{\partial}{\partial y_1''} $.
\end{itemize}
The last three derivations are the action of the Lie algebra $\mathfrak{psl}_2(\mathbb C)$ seen as the Lie algebra of infinitesimal generators of the group of fractional linear maps : $\mathbb C\frac{d}{dt} +  \mathbb C t\frac{d}{dt}+ \mathbb C \frac{t^2}{2}\frac{d}{dt}$. 
Notice that the action of  $\mathbb C X_1 + \mathbb C H_1 + \mathbb C Y_1$  preserves $\CC(t_1) \subset K_1$ and the induced action is the action of $\mathfrak{psl}_2(\mathbb C)$.
 One has $[D_1,X_1]=0$, $[D_1,H_1] =-D_1$ and $[D_1,Y_1]=-t_1D_1$.  

We also define $K_2$, $D_2$, $X_2$, $H_2$ and $Y_2$ analogously.

Let $V\subset \mathbb C^4  \times \mathbb C ^4$ be the Zariski closure of $(t_1, J_1(t_1),J'_1(t_1),J''_1(t_1), t_2, J_2(t_2), J'_2(t_2), J''_2(t_2))$ for $(t_1,t_2) \in \mathcal C \cap (U_1 \times U_2)$. By strong minimality and transcendence of $J_1(t_1)$ and $J_2(t_2)$, $\mathbb C(V)$ is an algebraic extension of $K_1$ and of $K_2$.

Let $X_1$, $H_1$, $Y_1$, $D_1$, $X_2$, $H_2$, $Y_2$, $D_2$ be the lifts of these derivations on $\mathbb C(V)$. On this field $D_1$ and $D_2$ are colinear, indeed $D_1 = \sigma D_2$ with $\sigma = \frac{\partial t_2}{\partial t_1}$ as a function on the algebraic curve $\mathcal C$.

As  $X_1$, $H_1$, $Y_1$ and $X_2$, $H_2$, $Y_2$ are two basis of $\mathbb C(J(\mathcal C))$-derivations of $\mathbb C(V)$, one has
$$
\begin{bmatrix}
X_1\\ H_1 \\ Y_1  
\end{bmatrix}
= 
A 
\begin{bmatrix}
X_2 \\ H_2  \\ Y_2
\end{bmatrix},
$$ 
where $A$ is a $3\times 3$ matrix with coefficients in $\mathbb C(V)$. The Lie bracket of $D_1 = \sigma D_2$ with each components of 
$\begin{bmatrix}
X_1\\ H_1 \\ Y_1  
\end{bmatrix}
$ gives 
$$
\begin{bmatrix}
0\\ -D_1 \\ -t_1D_1  
\end{bmatrix}= D_1 A 
\begin{bmatrix}
X_2 \\ H_2 \\ Y_2
\end{bmatrix}
+ A \left[\sigma D_2 , \begin{bmatrix} 
 X_2 \\   H_2 \\  Y_2
\end{bmatrix}\right]
=D_1 A 
\begin{bmatrix}
X_2 \\ H_2 \\ Y_2
\end{bmatrix}
- A \begin{bmatrix}
 X_2\sigma  \\  (H_2\sigma - 1)\\  (Y_2\sigma -t_2)
 \end{bmatrix} D_2.
$$ 

By independence of the derivations, $A$ is a matrix of first integrals of $D_1$. As the vector field $D_1$ has a Zariski dense trajectory on $V$, namely the analytic curve parameterized  by $(t_1,t_2) \mapsto (t_1, J_1(t_1),J'_1(t_1),J''_1(t_1), t_2, J_2(t_2), J'_2(t_2), J''_2(t_2))$  for $(t_1, t_2) \in \mathcal C \cap U_1 \times U_2$, $A$ is a matrix of complex numbers.

On $V$, one gets $\mathbb C X_1 + \mathbb C H_1 + \mathbb C Y_1 = \mathbb C X_2 + \mathbb C H_2 + \mathbb C Y_2$.

The derivations $X_1$, $H_1$, $Y_1$, $X_2$, $H_2$ and $Y_2$ preserve $\mathbb C(\mathcal C)$ the field of rational functions on the curve $\mathcal C$ and induce vector fields on $\mathcal C$. From this, we get that $\mathcal C$ is the graph of a correspondence on $\mathbb P^1$ sending $\frac{d}{dt_1}$, $t_1\frac{d}{dt_1}$ and $\frac{t_1^2}{2}\frac{d}{dt_1}$ into $\mathbb C\frac{d}{dt_2} +  \mathbb C t_2\frac{d}{dt_2}+ \mathbb C \frac{t_2^2}{2}\frac{d}{dt_2}$. Hence it is the graph of a homography.
\end{proof}

A rational function $R_1$ on $Y_1$ is said to have a {\em finite local Galois group} at a pole $p$ if $p$ is a regular singular point of the associated second order linear differential equation $\frac{d^2 \psi}{dy^2} +\frac{1}{2}R(y) \psi =0$ with finite local monodromy at $p$. By Fuch's theory this condition is equivalent to the fact that any Schwarzian primitive\footnote{A function whose Schwarzian is $R_1$.} of $R_1$ near $p$, given by the quotient of two independant solutions $\frac{\psi_1}{\psi_2}$, belongs to $\mathcal O^{alg}_{p}$, the algebraic closure of the ring of germs of holomorphic functions at $p$.

 This condition implies that $R_1$ has a pole at $p$ of order less than or equal to $2$, i.e., $ R_1(y) = \frac{1}{2}{\frac{1-\alpha^{2}}{(y-p)^2}} + \frac{\beta}{y-p} + f(y)$ with $\alpha \in \mathbb Q ^\ast$, $\beta \in \mathbb C$ and $f$ holomorphic in a neighborhood of $p$. Note that finite local Galois group at $p$ is not equivalent to $p$ being a regular singular point with rational parameter $\alpha$, since when $\alpha \in \mathbb Z$, the solution may have a logarithmic singularity at $p$ with infinite monodromy.

Let $Y_1^\ast$ be the curve $Y_1$ punctured at poles of $R_1$. Assume $Y_1^\ast$ is hyperbolic and choose a uniformizing function $\rho_1 : \mathbb H \to Y_1^\ast$. Let $\Gamma_1 = \pi_1(Y_1^\ast)$ be the fundamental group of the complex curve $Y_1^\ast$. As $\rho_1$ is a universal covering of $Y_1^\ast$, one can see $\pi_1(Y_1^\ast)$ a Fuchsian subgroup of ${\rm PSL}_2(\mathbb R)$. We define $Y_2^\ast$ and $\Gamma_2$, given a uniformization $\rho_2$, similarly.

\begin{prop}
Assume $R_1$ and $R_2$ have no poles with finite local Galois groups. If $\mathcal{C}$ is a bi-algebraic curve with respect to $J$, then there exists a uniformisation $\rho_2 : \mathbb H \to Y^\ast_2$ such that $\Gamma_1 \sim \Gamma_2$ and $J(\mathcal C)$ is a ${\rm Comm}(\Gamma_1)$-correspondence , i.e., a correspondence in $Y_1^\ast \times Y_2^{\ast}$ which is the image under $(\rho_1, \rho_2)$ of the graph of some $g \in {\rm Comm}(\Gamma_1) (= {\rm Comm}(\Gamma_2))$.
\end{prop}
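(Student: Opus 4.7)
The plan is to apply Theorem \ref{bialgebraic1} to reduce $\mathcal C$ to the graph of a homography $h \in \mathrm{PSL}_2(\mathbb C)$, transfer $h$ via the uniformizers $\rho_i$ to an element $g \in \mathrm{PSL}_2(\mathbb C)$ acting between the two copies of $\mathbb H$, and then exploit the algebraicity of $J(\mathcal C)$ together with the no-finite-local-Galois hypothesis to force $g$ into $\mathrm{Comm}(\Gamma_1) \subset \mathrm{PSL}_2(\mathbb R)$. First, $\Gamma_i$ is a lattice in $\mathrm{PSL}_2(\mathbb R)$ and hence Zariski dense in $\mathrm{PSL}_2(\mathbb C)$; the projective Galois group of the linearization attached to $\rho_i$ is therefore all of $\mathrm{PSL}_2(\mathbb C)$ and the Riccati hypothesis holds, so Theorem \ref{bialgebraic1} applies and gives $\mathcal{C} = \{(t_1, h(t_1))\}$ for some $h \in \mathrm{PSL}_2(\mathbb C)$ (horizontal/vertical cases being tacitly excluded since they produce product-type curves rather than correspondences). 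Writing $J_i = \rho_i \circ G_i$ with $G_i \in \mathrm{PSL}_2(\mathbb C)$ and setting $s = G_1(t_1)$, $g := G_2 h G_1^{-1}$ parametrizes $J(\mathcal C)$ as
\[ s \longmapsto (\rho_1(s),\; \rho_2(g(s))). \]

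Now define $\Gamma_{\mathcal C} := \Gamma_1 \cap g^{-1}\Gamma_2 g$. Every $\gamma \in \Gamma_{\mathcal C}$ fixes both coordinates of the parametrization, so $(\rho_1, \rho_2 \circ g)$ descends to a holomorphic, finite map $\Gamma_{\mathcal C}\backslash \mathbb H \to J(\mathcal C)$. Since $J(\mathcal C)$ is an irreducible algebraic curve of finite type in $Y_1 \times Y_2$ with finite surjective projections onto the Fuchsian quotients $Y_1^\star$ and $Y_2^\star$, the group $\Gamma_{\mathcal C}$ is necessarily a cofinite Fuchsian subgroup of $\mathrm{PSL}_2(\mathbb R)$ of finite index in $\Gamma_1$. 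The hypothesis that $R_1, R_2$ have no poles with finite local Galois group is used precisely here: it ensures every pole of $R_i$ is a genuine (non-orbifold) puncture of $Y_i^\star$, so $\Gamma_i$ is an honest Fuchsian group and the cover-theoretic counting proceeds without orbifold corrections.

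For the final rigidity, observe $\Gamma_{\mathcal C} \subset \mathrm{PSL}_2(\mathbb R) \cap g^{-1}\mathrm{PSL}_2(\mathbb R) g$. If $g \notin \mathrm{PSL}_2(\mathbb R)$, then $g^{-1}(\mathbb H)$ is a round disk distinct from $\mathbb H$, and the common stabilizer in $\mathrm{PSL}_2(\mathbb C)$ of two distinct round disks is at most one-real-dimensional --- too small to contain a cofinite Fuchsian group. Hence $g \in \mathrm{PSL}_2(\mathbb R)$; by a symmetric argument $\Gamma_{\mathcal C}$ is also of finite index in $g^{-1}\Gamma_2 g$, so $\Gamma_1 \sim g^{-1}\Gamma_2 g$ and $g \in \mathrm{Comm}(\Gamma_1) = \mathrm{Comm}(\Gamma_2)$. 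The parametrization then exhibits $J(\mathcal C)$ as the image under $(\rho_1, \rho_2)$ of the graph of $g$, as required. The main obstacle I anticipate is the precise matching $J_i = \rho_i \circ G_i$ (equivalently, identifying $R_i$ with the uniformization equation of $Y_i^\star$), together with the cofiniteness verification for $\Gamma_{\mathcal C}$; both rely essentially on the no-finite-local-Galois hypothesis to rule out the orbifold phenomena that would otherwise spoil the identification of $\Gamma_{\mathcal C}$ as a finite-index subgroup of $\Gamma_1$.
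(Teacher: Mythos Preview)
Your argument rests on the identification $J_i = \rho_i \circ G_i$ with $G_i \in \mathrm{PSL}_2(\mathbb C)$, and this is where it breaks. The map $\rho_i$ is the uniformizer of $Y_i^\ast$; it satisfies the Schwarzian equation attached to the rational function $R_{\Gamma_i}$ determined by the Fuchsian group $\Gamma_i = \pi_1(Y_i^\ast)$. The map $J_i$, by contrast, is a solution of the Schwarzian equation attached to the \emph{given} $R_i$, and nothing in the hypotheses forces $R_i = R_{\Gamma_i}$. Two solutions of the Schwarzian equation differ by precomposition with an element of $\mathrm{PSL}_2(\mathbb C)$ only when they satisfy the \emph{same} Schwarzian equation, so in general no such $G_i$ exists. (For a concrete obstruction: $R_i$ may have poles of order $>2$, i.e.\ irregular singularities, while $R_{\Gamma_i}$ has only order~$2$ poles with parabolic exponents.) The same conflation appears in your verification of the Riccati hypothesis: Zariski density of $\Gamma_i$ controls the Galois group of the linearization attached to $\rho_i$, not the one attached to $J_i$. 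You flag this identification as the ``main obstacle'', and indeed it is fatal; the no-finite-local-Galois hypothesis does not repair it, and your description of that hypothesis as ruling out orbifold points is off target (every pole of $R_i$ is a puncture of $Y_i^\ast$ by definition, regardless).

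The paper avoids any direct comparison of $J_i$ with $\rho_i$ and works instead with the image curve $J(\mathcal C)\subset Y_1\times Y_2$. First (Lemma~1) it uses the no-finite-local-Galois hypothesis in a local way: at a ramification point of a projection of $J(\mathcal C)$, one can pull back a Schwarzian primitive of $R_2$ along $J(\mathcal C)$ to obtain an element of $\mathcal O^{\mathrm{alg}}$ which is also a Schwarzian primitive of $R_1$; the hypothesis then forces the projection to land on a pole of $R_1$. Hence $J(\mathcal C)\cap(Y_1^\ast\times Y_2^\ast)$ is an unramified cover of both factors. Second (Lemma~2, a Margulis-type statement), any correspondence in $Y_1^\ast\times Y_2^\ast$ that covers both factors lifts, after choosing compatible uniformizations, to the graph of some $g\in\mathrm{Comm}(\Gamma_1)$. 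The role of $J$ is thus only to produce the algebraic curve $J(\mathcal C)$ and, via Theorem~\ref{bialgebraic1}, to know that $\mathcal C$ is a homography so that Schwarzian primitives transport correctly in the proof of Lemma~1; the commensurator conclusion is then pure covering theory on the target side.
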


\begin{proof}
The proof consists of two lemmas.
\begin{lem}
Assume $R_1$ and $R_2$ have no poles with finite local Galois groups. If $p\in J(\mathcal C)$ is such that one of the projection on $Y_1$ or $Y_2$ ramifies, then the projection of $p$ on $Y_1$ (resp. $Y_2$) is a pole of $R_1$ (resp. $R_2$).
\end{lem}
\begin{proof}
By Theorem \ref{bialgebraic1} and using the hypothesis, we may assume that the curve $\mathcal C$ is the graph of $g\in \rm{PSL}_2(\mathbb C)$. Let  $p \in J(\mathcal C)$ be a point such that the first projection ramifies  at $p$ and its second projection is not a pole of $R_2$. Let $\tau_2$ be a Schwarzian primitive of $R_2$ near the projection of $p$. Its pull back on $J(\mathcal C)$ is an holomorphic function and its direct image near the projection of $p$ in $Y_1$ belongs to $\mathcal O^{alg}_{{\rm pr_1}(p)}$. As it is a Schwarzian primitive of $R_1$, and $R_1$ has no poles of finite local Galois group, the first projection of $p$ is not a pole of $R_1$.  The lemma is proved.
\end{proof}

\begin{lem}{\cite[page 337]{Marg}}
If a correspondence $X\subset Y_1^\ast \times Y_2^\ast$ is a covering of both factors then  then there exists a uniformization $\rho_2 : \mathbb H \to Y^\ast_2$ such that  $\Gamma_1 \sim \Gamma_2$ and $X$ is a ${\rm Comm}(\Gamma_1)$-correspondence .
\end{lem}

\begin{proof}

Let $\rho : \mathbb H \to X$ be the uniformization map of $X$ such that $\rho_1 = {\rm pr}_1 \circ \rho$. 
It defines a embedding of $\pi_1(X)$ in ${\rm PSL}_2(\mathbb R)$ with image which we denote by $\Gamma$. 
The uniformization of $Y^\ast_2$ is $\rho_2 = {\rm pr}_2 \circ \rho$ and defines a embedding of $\pi_1(Y^\ast_2)$ in ${\rm PSL}_2(\mathbb R)$ with image which we denote by $\Gamma_2$.  
As $\Gamma$ is a finite index subgroup of $\Gamma_1$ and of $\Gamma_2$, the two groups are commensurable. 

Let $Z \subset \mathbb H \times \mathbb H$ be an irreducible component of the analytic variety $(\rho_1,\rho_2)^{-1}(X)$. The subset $Z$ is a non-ramified covering of $\mathbb H$ and hence it is the graph of automorphism from $\mathbb H$ to $\mathbb H$. By the Schwarz lemma, it is the graph of an homography $g\in {\rm PSL}_2(\mathbb C)$. General arguments ensure that $g \in {\rm Comm}(\Gamma_1)$,  see for example lemmas 5.15 and 5.16 in \cite{CasFreNag}.
\end{proof}

The polar locus of $R_1$ is mapped by $J(\mathcal C)$ on the polar locus of $R_2$ and these sets contained the projection of ramification points. Hence the restriction of $J(\mathcal C)$ above $Y_1^\ast \times Y_2^\ast$ is a covering of both factors.

Applying the second lemma, one gets that $J(\mathcal C)$ is a ${\rm Comm}(\Gamma_1)$-correspondence. 
\end{proof}

We conclude this section with an example showing that for any type of singularities  (regular, irregular, finite or infinite monodromy) there exists example of couple of Schwarzian equations with bi-algebraic curves.
\begin{exam}
Let $\Phi : \mathbb C {\rm P}^1 \to \mathbb C {\rm P}^1$ be a non constant rational map with critical points $B \subset \mathbb C {\rm P}^1$ and $t$ be an affine coordinate on $\mathbb C {\rm P}^1$. For $ R$ a rational function on $\mathbb C {\rm P}^1$, let $S_1(R)$ be the set of poles with finite local Galois group and $S_2(R)$ be the set of other poles.
Consider the Schwarzian equations with $R_1 = R$ and $R_2 = R_\Phi = R \circ \Phi (\frac{d\Phi}{dt})^2 + S_t\Phi$.

One has $S_2(R_\Phi) = \Phi^{-1}(S_2(R))$ and $S_1(R_\Phi) = (\Phi^{-1}(S_1(R)) \cup B) \setminus S_2(R_\Phi)$. If $J_2$ is a solution of the second Schwarzian equation, then $J_1 = \Phi(J_2)$ is a solution of first the Schwarzian equation. So, the diagonal in $\mathbb H \times \mathbb H$ is sent by $(J_1,J_2)$ to the graph of $\Phi$. 

\end{exam}



\end{document}